\newtheorem{theorem}{Theorem}
\newtheorem{lemma}[theorem]{Lemma} 
\newtheorem{corollary}[theorem]{Corollary}
\newtheorem{proposition}[theorem] {Proposition}
\theoremstyle{definition}
\newtheorem{example}[theorem]{Example}
\newtheorem{remark}[theorem]{Remark}
\DeclareMathOperator{\Fix}{Fix}
\DeclareMathOperator{\ind}{ind}
\newcommand{\Z}{\mathbb{Z}}
\newcommand{\R}{\mathbb{R}}
\numberwithin{theorem}{section}
\title{LIFTING CLASSES FOR THE FIXED POINT 
THEORY OF $n$-VALUED MAPS}
\author{Robert F.\ Brown \\
\small {Department of Mathematics},
\small {University of California},
\small {Los Angeles, CA  90095-1555} \\
\small {e-mail: rfb@math.ucla.edu}\\
\\
Charlotte Deconinck, \;\;
Karel Dekimpe\footnote{Research supported by long term structural funding - Methusalem grant of the Flemish Government.}\\
\small {KU Leuven Campus Kulak Kortrijk},
\small {8500 Kortrijk},
\small {Belgium} \\
\small {e-mail: Charlotte.Deconinck@kuleuven.be},\;\;
\small {Karel.Dekimpe@kuleuven.be}\\
\\
P. Christopher Staecker \\
\small {Department of Mathematics},
\small {Fairfield University},
\small {Fairfield, CT 06824} \\
\small {e-mail: cstaecker@fairfield.edu}}
\begin{document}

\large

\bibliography{ref} \bibliographystyle{plain}

\maketitle

\begin{abstract}
The theory of lifting classes and the Reidemeister number of single-valued maps
of a finite polyhedron $X$ is extended to $n$-valued maps by replacing liftings
to universal covering spaces by liftings with codomain an
orbit configuration space, a structure recently
introduced by Xicot\'encatl.  The liftings of an $n$-valued map $f$ split 
into self-maps of 
the universal covering space of $X$ that we call lift-factors.
An equivalence relation
is defined on the
lift-factors of $f$ and the number of equivalence 
classes is the Reidemeister number of $f$.
The fixed point classes of $f$
are the projections of the fixed point sets of the
lift-factors and are the same as
those of Schirmer.  
An equivalence relation is defined on the fundamental group 
of $X$ such that the 
number of equivalence classes equals the Reidemeister number. 
We prove that if $X$ is a manifold of dimension at least three, then 
algebraically the orbit configuration space approach 
is the same as one utilizing the universal covering space.
The Jiang subgroup is extended to $n$-valued
maps as a subgroup of the group of covering transformations of the 
orbit configuration space and used to
find conditions under which the Nielsen number
of an $n$-valued map equals its Reidemeister number.  If an $n$-valued map 
splits into $n$ single-valued maps, then its $n$-valued Reidemeister 
number is the sum of their Reidemeister numbers.
\end{abstract}

\vspace{3mm}

\noindent{\bf Keywords and Phrases:} lifting, 
$n$-valued map, Reidemeister number, Nielsen number, 
configuration space, universal covering space, 
Jiang subgroup, orbit configuration 
space, semidirect product, fixed point class, 
cyclic homotopy, braid group, lift-factor 

\vspace{3mm}

\noindent {\bf Subject Classification:} 55M20, 
54C60, 57M10, 55R80, 57M05

\setcounter{section}{0}

\section{Introduction}

Throughout the paper, the space $X$ will be a connected finite 
polyhedron.   Given some natural number $n>0$, a set-valued function 
$f:X \multimap X$ is an \emph{$n$-valued map}
if it is 
continuous, that is, both upper and lower semi-continuous, 
and the cardinality of $f(x)$ is 
exactly $n$ for each $x$, see \cite{bg}.

For the Nielsen fixed point theory of a single-valued map 
$f \colon X \to X$, the set of liftings $\tilde f \colon 
\tilde X \to \tilde X$ to the universal covering space 
$p \colon \tilde X \to X$ is partitioned into equivalence 
classes under conjugation by covering transformations.  
An equivalence class is called a \emph{lifting class} and 
the number of such classes, which may be infinite, is the 
\emph{Reidemeister number} $R(f)$ of the map $f$ \cite{j1}, 
\cite{j2}.  
The fixed point sets
$\Fix(\tilde f)$ of equivalent liftings, if nonempty,
are mapped by $p$ to the same subsets of $\Fix(f)$. 
The sets $p\Fix(\tilde f)$ are 
the {\it fixed 
point classes} of the map $f$ and the number of such 
classes of nonzero fixed point index, called 
the {\it Nielsen number}, 
is a lower bound for the number of fixed points of every 
map homotopic to $f$.

The purpose of this paper is to extend the theory of 
lifting classes to the setting of $n$-valued maps.  In 
order to do so, following \cite{gg} we will view an 
$n$-valued map as a single-valued map from $X$ to a 
space of subsets of $X$.  Let $F_n(X)$ be the 
\emph{configuration space of $n$ ordered points on $X$}, 
defined as:
\[ F_n(X) = \{ (x_1,\dots, x_n) \mid  i \neq j \text{ 
implies } x_i \neq x_j \}. \]
which is topologized as a subset of the $n$-fold Cartesian 
product of $X$.  Let $D_n(X)$ be the \emph{configuration 
space of $n$ unordered points on $X$}, defined as: 
\[ D_n(X) = \{ \{x_1,\dots, x_n\} \mid  i \neq j \text{ 
implies } x_i \neq x_j \}. \]  Thus $D_n(X)$ is the orbit 
space of $F_n(X)$ under the free action of the symmetric 
group $\Sigma_n$ and the quotient map $q:F_n(X) \to D_n(X)$, 
which induces the quotient topology on $D_n(X)$, is a 
covering space of order $n!$.  We will not distinguish between an $n$-valued map 
$f \colon X \multimap X$, and the corresponding function $f \colon X \to D_n(X)$, which is also continuous 
\cite{bg}.  Thus we may refer to a map 
$f \colon X\to D_n(X)$ as an $n$-valued map.  

As we will discuss in Section \ref{xicosection}, the lifting classes for 
$f \colon X\to D_n(X)$ will not be 
classes of maps of the corresponding 
universal covering spaces because if $n > 1$ then
such a lifting does not have a 
well-defined fixed point set.  Instead we will consider 
liftings $\bar f$ of $f$ from the universal covering space 
$\tilde X$ to an intermediate covering space, 
the \emph{orbit configuration space}
 $p^n \colon F_n(\tilde X, \pi) \to D_n(X)$
where
\[ F_n(\tilde X, \pi) = 
\{ (\tilde x_1,\dots, \tilde x_n) \mid  i \neq j \text{
implies } \tilde x_i \neq \alpha \tilde x_j \text{ for all } \alpha
 \in \pi_1(X) \}. \]
 Since $F_n(\tilde X, \pi)$ 
is a subspace of $F_n(\tilde X)$, we may write a lifting in the form 
$\bar f = (\bar f_1, \bar f_2, \dots , \bar f_n)$ where each $\bar f_i$, 
called a {\it lift-factor}, is a map of $\tilde X$ to itself.  Conjugation 
by covering transformations partitions the set of all lift-factors of all 
liftings of $f$ into equivalence classes.  In the setting of $n$-valued maps, 
the \emph{Reidemeister number} $R(f)$ is the number of equivalence classes of 
lift-factors.  The fixed point set of $f$ is the union of the images of the 
fixed point sets of the lift-factors, which are defined to be the {\it fixed 
point classes} of $f$.  We prove that these images are identical if the 
lift-factors are equivalent and disjoint otherwise.  Therefore the number of 
fixed point classes equals the Reidemeister number. 

Also in Section 2, choosing appropriate base points, we call the lifting 
$\bar f^*$ of $f$ that preserves the base points its {\it basic lifting}.  
Then, writing $\bar f^* = (\bar f^*_1, \bar f^*_2, \dots , \bar f^*_n)$, 
every lift-factor of $f$ can be written in the form $\alpha \bar f^*_i$ for 
some $\alpha \in \pi_1(X)$ and $1 \le i \le n$. 
As a tool for the subsequent calculations of the Reidemeister number, we use 
the basic lifting to define a homomorphism $\psi_f \colon \pi_1(X) \to 
\pi_1(X)^n \rtimes \Sigma_n$, where this semi-direct product of the $n$-fold 
direct product of $\pi_1(X)$ and the symmetric group of order $n$ is 
isomorphic to the group of covering transformations of the orbit 
configuration space, by setting $\psi_f (\gamma) \circ \bar f^*= \bar 
f^* \circ \gamma$.  We then can write $\psi_f(\gamma) = (\phi_1(\gamma), 
\dots , \phi_n(\gamma); \sigma_\gamma)$
where $\phi_i \colon \pi_1(X) \to \pi_1(X)$ and $\sigma \colon 
\pi_1(X) \to \Sigma_n$ with $\sigma_\gamma = \sigma(\gamma)$.

Helga Schirmer, in initiating the Nielsen fixed point theory 
for $n$-valued maps in \cite{s}, extended the classical 
definition of the fixed point classes to $n$-valued maps.   
As a model for her definition, she did not use images of 
fixed point sets of liftings but, instead, an 
equivalent definition in terms of paths in the space.  
Gert-Jan Dugardein reformulated Schirmer's theory in terms 
of a definition of lifting classes different than the one 
we introduce in Section 2, but one that is 
equivalent to it, and he showed 
that the fixed point classes 
defined as images of the fixed point sets of those liftings 
are the same as the classes defined by Schirmer.  We will 
present Dugardein's results in Section \ref{dugsection} and demonstrate 
that our definition of the fixed point classes 
is equivalent to Schirmer's.

In the fixed point theory of a single-valued map $f \colon X \to X$, there is
a \emph{twisted conjugacy relation} defined on the fundamental group
$\pi_1(X)$ of $X$ by setting $\alpha$ equivalent to $\beta$ if there exists 
$\gamma \in \pi_1(X)$ such that $\alpha = \gamma \beta f_\pi(\gamma^{-1})$ 
where $f_\pi$ is the fundamental group homomorphism induced by $f$.  The 
equivalence classes are in
one-to-one correspondence with the lifting classes and thus the Reidemeister
number is the number of equivalence classes with respect to this 
twisted conjugacy.  In the setting of $n$-valued maps, for $n > 1$ the 
approach in Section 4 is somewhat different.
We utilize the homomorphism $\psi_f \colon \pi_1(X) \to 
\pi_1(X)^n \rtimes \Sigma_n$ where $\psi_f(\gamma) = (\phi_1(\gamma), \dots , 
\phi_n(\gamma); \sigma_\gamma)$ that we introduced in Section 2.   We define 
an equivalence relation $\sim_i$ on $\pi_1(X)$ by setting $\alpha \sim_i \beta$ 
if there exists $\gamma \in \pi_1(X)$ such that $\sigma_\gamma(i) = i$ and 
$\alpha = \gamma \beta \phi_i(\gamma^{-1})$.  This is not a twisted conjugacy 
relation because $\phi_i$ is in general not a homomorphism.  However, the 
Reidemeister number is then the number of equivalence classes.  We illustrate 
this approach by computing the Reidemeister number of a specific $3$-valued map of the $2$-torus. 

In Section \ref{circlesection} we calculate the Reidemeister
number for all $n$-valued maps $f \colon S^1 \multimap S^1$ of the circle.  
Viewing the circle as the complex numbers of norm one, it was proved in 
\cite{br1} that every such map is $n$-valued homotopic to a map that takes $z$ to 
the set of $n$-th roots of $z^d$ for some integer $d$.  We use the results of 
the previous section to prove that $R(f) = |n - d|$ if $d \ne n$ and 
$R(f) = \infty$ if $d = n$.

Since the Reidemeister number of a single-valued map $f \colon X \to
X$ is determined by a twisted conjugacy relation that depends on 
the induced fundamental group homomorphism $f_\pi \colon \pi_1(X) \to 
\pi_1(X)$, it would be natural to define the Reidemeister number of an 
$n$-valued map $f \colon X \to D_n(X)$ in terms of the induced fundamental 
group homomorphism $f_\pi \colon \pi_1(X) \to \pi_1(D_n(X))$.  Although, 
for reasons explained above, we have utilized the orbit configuration space of 
$\tilde X$ in place of the universal covering space of $D_n(X)$, in Section 6 
we prove that if $X$ is a manifold of dimension at least three, then 
algebraically the two approaches are the same.  A notable feature of this 
section is a demonstration that the configuration space $F_n(X)$ is 
connected (if $X$ is a connected polyhedron not homeomorphic to the interval 
or circle) that is modeled on an argument regarding robot motion planning.

For a map $f \colon X \to X$, Jiang in \cite{j3} 
introduced a subgroup $J(\tilde f)$ of the fundamental 
group that is called the \emph{Jiang subgroup} of the 
map $f$.  It consists of the elements $\alpha \in 
\pi_1(X)$ such that there is a \emph{cyclic homotopy} $H \colon X 
\times I \to X$, that is, a homotopy with the property that $H(x, 0) = 
H(x, 1) = f(x)$ that, when lifted to the universal covering 
space, induces a homotopy between $\tilde f$ and $\alpha 
\tilde f$, where $\alpha \in \pi_1(X)$ is identified with the 
corresponding covering transformation.  In Section 7 we extend Jiang's
theory to $n$-valued maps.
Lifting a cyclic homotopy $H \colon X \times I \to D_n(X)$ to a homotopy starting 
at the basic lifting $\bar f^* \colon \tilde X \to F_n(\tilde X, \pi)$ 
determines an element of $\pi(X)^n \rtimes \Sigma_n$ at the other end of the 
lifting and thus the cyclic homotopies determine  a 
subgroup $J_n(\bar f^*)$ of $\pi(X)^n \rtimes \Sigma_n$, the 
\emph{Jiang subgroup for $n$-valued maps}.  If $\psi_f(\pi_1(X)) \subseteq 
J_n(\bar f^*)$, where $\psi_f \colon \pi_1(X) \to \pi_1(X)^n \rtimes \Sigma_n$ 
is the homomorphism introduced in Section 2, and for each $i \in 
\{1, \dots , n\}$, there exists $\gamma \in \pi(X)$ such that 
$\sigma_\gamma(j) = i$ for some $j \in \{1, \dots , n\}$ then all the fixed 
point classes of $f$ have the same fixed point index and thus the Nielsen 
number $N(f)$ has the property that either $N(f) = 0$ or $N(f) = R(f)$.  
If $X = T^q$ is the $q$-torus for $q \ge 1$, then $\psi_f(\pi_1(T^q)) 
\subseteq J_n(\bar f^*)$ for all $n$-valued maps.  In particular, if 
$f_{n, A} \colon T^q \to D_n(T^q)$ is a linear $n$-valued map of 
\cite{bl}, then either $N(f_{n, A}) = 0$ or $N(f_{n, A}) = R(f_{n, A})$.  

A final section discusses 
\emph{split $n$-valued maps}, that is, maps $f \colon X \to
D_n(X)$ for which there exist single-valued maps
$f_1, \dots , f_n \colon X \to X$ such that
$f(x) = \{f_1(x), \dots , f_n(x)\}$ for all $x \in X$.  We prove that in this 
case its Reidemeister number for $n$-valued maps is the sum of the 
Reidemeister numbers of the $f_i$.

We thank the referee for  
helpful comments and suggestions.

\section{Coverings of $D_n(X)$}\label{xicosection}

Let $X$ be a space such that $F_n(X)$ is connected and let $u \colon \tilde{F}_n(X) \to F_n(X)$ be the universal covering space.
  There is a covering $q:F_n(X) \to D_n(X)$ so,
since $\tilde F_n(X)$ is a universal cover,
it is simply connected and thus $\tilde F_n(X)$ is 
the universal covering space of $D_n(X)$ with 
covering projection $qu: \tilde F_n(X) \to D_n(X)$. 

Let $p \colon \tilde X \to X$ be the universal covering
space of $X$.  A map $f \colon X \to D_n(X)$
has a lifting $\tilde f$ to the universal 
covering spaces:

\[ 
\begin{tzcd}
\tilde X \arrow[r,"\tilde f"] \arrow[d,"p"] & \tilde F_n(X) \arrow[d,"qu"] \\
X \arrow[r,"f"] & D_n(X)
\end{tzcd}  
\]

But, in contrast to the setting of single-valued maps, the 
lifting of an $n$-valued map $f:X\to D_n(X)$ to the 
universal covering spaces, for $n > 1$, is not a convenient object of 
study because the fixed point set of $\tilde f$ is not 
defined.  Consequently, 
the map $f$ will not be lifted to the universal covering 
spaces, but instead we will make use of an 
intermediate covering.

Let $E$ be a space and $G$ be a group acting on $E$ 
such that the projection $E\to E/G$ is a principal 
fibration. Then Xicot\'encatl defined the \emph{orbit 
configuration space of $n$ ordered points} (see 
\cite{x}) as:
\[ F_n(E,G)  = \{ (e_1,\dots,e_n)\in F_n(E) \mid 
Ge_i \neq Ge_j \text{ for } i \neq j \}. \]

We will make use of the orbit configuration space 
$F_n(\tilde X, \pi_1(X))$ which we will write 
more compactly
as $F_n(\tilde X,\pi)$. In Section \ref{universalsection} we will show that both $F_n(X)$ and  $F_n(\tilde X,\pi)$ are 
(path) connected and locally path connected for all compact polyhedra except when $X$ is homeomorphic to an interval or a circle. In the latter cases  $F_n(X)$ and  $F_n(\tilde X,\pi)$ are still locally path connected. 
In the general case, Theorem 2.3 
of \cite{x} describes a covering 
$F_n(\tilde X,\pi)$ of $D_n(X)$. 

\begin{theorem}[Xicot\'encatl]\label{xicothm}
Let $X$ be a compact polyhedron which is not homeomorphic to the circle or an interval, then there is a covering map 
$$
p^n \colon F_n(\tilde X,\pi) 
\to D_n(X),
$$
with covering group the semidirect
product $\pi_1(X)^n \rtimes
\Sigma_n$, where $\pi_1(X)^n$ is the direct product
of $n$ copies of the fundamental group and $p^n$ 
applies $p \colon \tilde X 
\to X$ to each element of an $n$-element configuration.
The 
action of $\pi_1(X)^n \rtimes\Sigma_n$ on $F_n(\tilde 
X,\pi)$ is given by: 
$$
(\alpha_1,\dots,\alpha_n;\sigma)\cdot (\tilde x_1,\dots,
\tilde x_n) = (\alpha_1 \tilde x_{\sigma^{-1}(1)}, \dots, 
\alpha_n \tilde x_{\sigma^{-1}(n)}). 
$$ 
\end{theorem}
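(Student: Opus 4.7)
My plan is to verify three things in sequence: that $p^n$ is a well-defined continuous map, that the displayed formula defines a left action of $\pi_1(X)^n\rtimes\Sigma_n$ on $F_n(\tilde X,\pi)$, and that this action realizes $p^n$ as a regular covering with exactly that deck group.

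First I would observe that, by the definition of $F_n(\tilde X,\pi)$, whenever $i\neq j$ the images $p(\tilde x_i)$ and $p(\tilde x_j)$ are distinct, so the formula $p^n(\tilde x_1,\dots,\tilde x_n)=\{p(\tilde x_1),\dots,p(\tilde x_n)\}$ produces a genuine element of $D_n(X)$, with continuity inherited from the componentwise application of $p$ followed by the quotient $F_n(X)\to D_n(X)$. The same distinctness argument shows that the proposed action stays inside $F_n(\tilde X,\pi)$, since each new entry $\alpha_i\tilde x_{\sigma^{-1}(i)}$ has the same $\pi_1(X)$-orbit as $\tilde x_{\sigma^{-1}(i)}$. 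Imposing the associativity $[(\alpha;\sigma)(\beta;\tau)]\cdot\tilde x=(\alpha;\sigma)\cdot[(\beta;\tau)\cdot\tilde x]$ then forces the product to be $(\alpha;\sigma)(\beta;\tau) = (\alpha_1\beta_{\sigma^{-1}(1)},\dots,\alpha_n\beta_{\sigma^{-1}(n)};\sigma\tau)$, recovering the semidirect structure with $\Sigma_n$ acting on $\pi_1(X)^n$ by index permutation.

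For the covering property, given $\{x_1,\dots,x_n\}\in D_n(X)$ I would pick pairwise disjoint open neighborhoods $V_i$ of $x_i$ in $X$, each small enough to be evenly covered by $p$, and let $W$ be their image in $D_n(X)$. Pulling back under $p^n$, the preimage of $W$ decomposes as the disjoint union, indexed by $(\alpha_1,\dots,\alpha_n;\sigma)\in\pi_1(X)^n\rtimes\Sigma_n$, of the slabs $\alpha_1\tilde V_{\sigma^{-1}(1)}\times\cdots\times\alpha_n\tilde V_{\sigma^{-1}(n)}$, where $\tilde V_i$ is a fixed sheet above $V_i$. Each slab maps homeomorphically onto $W$, so $p^n$ is a covering and the group action permutes the sheets simply transitively. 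Freeness is immediate: if $(\alpha;\sigma)$ fixes a tuple, projecting via $p^n$ shows that $\sigma$ permutes the distinct points $p(\tilde x_i)$ among themselves, so $\sigma=1$; then $\alpha_i\tilde x_i=\tilde x_i$ together with the free deck action of $\pi_1(X)$ on $\tilde X$ forces $\alpha_i=1$.

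The main obstacle I expect is bookkeeping: one must shrink the $V_i$ enough that all their $\pi_1(X)$-translates in $\tilde X$ remain pairwise disjoint, and keep the index conventions for $\sigma$ and $\sigma^{-1}$ consistent with the semidirect formula throughout. The exclusion of the interval and circle enters only indirectly, via the earlier claim that $F_n(\tilde X,\pi)$ is connected, which is what lets one identify the deck group of $p^n$ with the entirety of $\pi_1(X)^n\rtimes\Sigma_n$ rather than some proper quotient.
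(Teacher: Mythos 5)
Your argument is correct. Note, though, that the paper does not actually prove this statement: it is imported verbatim from Xicot\'encatl (Theorem 2.3 of the cited reference), so there is no internal proof to compare against; what you have written is a self-contained substitute for that citation, and it is the standard one. All the key points check out: the defining condition of $F_n(\tilde X,\pi)$ guarantees that $p^n$ lands in $D_n(X)$ and that the action formula stays inside $F_n(\tilde X,\pi)$; associativity does force exactly the semidirect-product multiplication the paper records after the theorem; your slab decomposition of $(p^n)^{-1}(W)$ is complete (the pairwise disjointness of the $V_i$ is what makes each unordered configuration in $W$ correspond to a unique ordered tuple, hence each slab maps homeomorphically onto $W$, and it also ensures each slab lies in the orbit configuration space since distinct coordinates project to distinct points of $X$); and the freeness and simple transitivity on fibers are verified correctly. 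You are also right about where the hypothesis on $X$ enters: simple transitivity on a fiber only identifies the full deck group with $\pi_1(X)^n\rtimes\Sigma_n$ once one knows $F_n(\tilde X,\pi)$ is connected, which the paper establishes separately in Section 6 and which fails for the interval and the circle (Remark 2.2 of the paper records exactly what survives in those cases). One small simplification: you do not need to shrink the $V_i$ so that ``all their $\pi_1(X)$-translates remain pairwise disjoint''; evenly covered neighborhoods automatically have pairwise disjoint sheets, and disjointness of sheets over $V_i$ from sheets over $V_j$ for $i\neq j$ already follows from $V_i\cap V_j=\emptyset$ downstairs.
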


The group operation and inverse for the semidirect 
product take the form:
\begin{align*} 
(\alpha_1,\dots,\alpha_n; \sigma)(\beta_1,\dots,
\beta_n;\rho) &= (\alpha_1\beta_{\sigma^{-1}(1)}, \dots, 
\alpha_n\beta_{\sigma^{-1}(n)}; \sigma \circ \rho) \\
(\alpha_1,\dots,\alpha_n;\sigma)^{-1} &= 
(\alpha^{-1}_{\sigma(1)},\dots, 
\alpha^{-1}_{\sigma(n)}; \sigma^{-1}).
\end{align*}

\begin{remark}\label{remark for circle}
If $X$ is homeomorphic to an interval or a circle the space $F_n(\tilde X,\pi)$ is disconnected for all $n>1$. However the space $D_n(X)$ is connected (see Section \ref{universalsection}). We still have that $\pi_1(X)^n \rtimes \Sigma_n$ acts properly discontinuously on $F_n(\tilde X, \pi)$ and that the orbit space $(\pi_1(X)^n \rtimes \Sigma_n)\backslash F_n(\tilde X,\pi)$ equals $D_n(X)$, but
$\pi_1(X)^n \rtimes \Sigma_n$ is not the group of covering transformations. On the other hand we have the following facts 
\begin{itemize}
\item if $C_1$ and $C_2$ are two connected components of $F_n(\tilde X,\pi)$, then there exists $(\alpha_1,\dots,\alpha_n; \sigma)\in \pi_1(X)^n \rtimes \Sigma_n$ such that 
\[(\alpha_1,\dots,\alpha_n; \sigma) C_1 = C_2.\]
\item For any connected component $C$ of $F_n(\tilde X, \pi)$, it follows that the restriction map $p^n:C \to D_n(X)$ is a covering map, with covering group 
\[G= \{ (\alpha_1,\dots,\alpha_n; \sigma) \in  \pi_1(X)^n \rtimes \Sigma_n\;|\; 
(\alpha_1,\dots,\alpha_n; \sigma) C = C\}.\]
\end{itemize}
\end{remark}

Now, let $X$ be any compact polyhedron and let $f \colon X \to D_n(X)$ be an $n$-valued map
and $\bar f \colon \tilde X \to F_n(\tilde X, \pi)$
a lifting of $f.$
Since $F_n(\tilde X, \pi) \subseteq F_n(\tilde X)$,
we may write $\bar f$ in terms of the coordinate
self-maps of $\tilde X$ as
$\bar f = (\bar f_1, \dots , \bar f_n).$
Then we have the diagram 

\[ 
\begin{tzcd}[column sep=large]
\tilde X \arrow[r,"{(\bar f_1, \dots , \bar f_n)} "]
\arrow[d,"p"] 
& F_n(\tilde X, \pi) \arrow[d,"p^n"] 
\\
X \arrow[r,"f"] & D_n(X)
\end{tzcd} 
\]

We call the maps $ \bar f_i \colon \tilde X \rightarrow \tilde X $
the \textit{lift-factors} of the map $f.$
Note that another lifting
$\bar f' \colon \tilde X \to F_n(\tilde X, \pi)$
of $f$ gives rise to other lift-factors
$ \bar f'_1,\dots,\bar f'_n $ of $f.$
We define the \textit{set of lift-factors of f}
as the set containing all lift-factors
of all possible liftings of $f.$

Recall from the single-valued theory that, 
choosing a lifting $\tilde f^* \colon 
\tilde X \to \tilde X$ to the universal 
covering space of a single-valued 
function $f \colon X \to X$,  the 
liftings are the $\alpha \tilde f^*$ where 
$\alpha$ is a covering transformation 
and therefore each lifting may be associated with 
an element of $\pi_1(X)$.  Liftings $\alpha 
\tilde f^*$ and $\beta \tilde f^*$ are 
{\it equivalent via $\mu \in \pi_1(X)$} 
if 
$$
\alpha \tilde 
f^* = \mu \beta \tilde f^* \mu^{-1}.
$$
In what follows, we will define
a similar equivalence relation
for $n$-valued maps
on the set of lift-factors.

\begin{lemma}
Let $\bar f_i $ be a lift-factor of $f$
and $ \gamma \in \pi_1(X). $
Then $ \gamma \bar f_i \gamma^{-1} $
is also a lift-factor of $f.$
\end{lemma}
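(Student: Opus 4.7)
The plan is to exhibit an explicit lifting of $f$ one of whose coordinate maps is $\gamma \bar f_i \gamma^{-1}$. By hypothesis, $\bar f_i$ appears as the $i$-th coordinate of some lifting $\bar f = (\bar f_1, \dots, \bar f_n)\colon \tilde X \to F_n(\tilde X, \pi)$, and my candidate will be
\[
\bar g := (\gamma, \gamma, \dots, \gamma;\,\mathrm{id})\circ \bar f \circ \gamma^{-1},
\]
where $\gamma^{-1}$ denotes the covering transformation of $p\colon \tilde X \to X$ corresponding to $\gamma^{-1}\in \pi_1(X)$, and $(\gamma, \dots, \gamma;\mathrm{id})\in \pi_1(X)^n \rtimes \Sigma_n$ acts diagonally on $F_n(\tilde X, \pi)$ by the formula in Theorem~\ref{xicothm}.

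The verification has three small steps. First, $\bar g$ lands in $F_n(\tilde X, \pi)$, because the action of $\pi_1(X)^n \rtimes \Sigma_n$ preserves this subspace. Second, $\bar g$ is a lifting of $f$, i.e.\ $p^n \circ \bar g = f\circ p$: since $p^n$ is invariant under the covering group action, $p^n\circ(\gamma, \dots, \gamma;\mathrm{id}) = p^n$, so
\[
p^n \bar g(\tilde x) = p^n \bar f(\gamma^{-1}\tilde x) = f p (\gamma^{-1}\tilde x) = f p(\tilde x),
\]
where the last equality uses that $\gamma^{-1}$ is a covering transformation of $p$. Third, writing out the diagonal action coordinate-wise gives
\[
\bar g(\tilde x) = \bigl(\gamma\bar f_1(\gamma^{-1}\tilde x),\dots,\gamma\bar f_n(\gamma^{-1}\tilde x)\bigr),
\]
so the $i$-th lift-factor of $\bar g$ is exactly $\gamma\bar f_i\gamma^{-1}$.

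There is no substantive obstacle; the argument is essentially forced once one notices that conjugating the single coordinate $\bar f_i$ corresponds on the full lifting to pre-composition with $\gamma^{-1}$ on $\tilde X$ and post-composition with the diagonal covering transformation on $F_n(\tilde X, \pi)$. The only point requiring a sentence of care is the case where $X$ is a circle or interval, in which $F_n(\tilde X, \pi)$ may be disconnected (Remark~\ref{remark for circle}); but the action formula of Theorem~\ref{xicothm} still produces a continuous self-map of $F_n(\tilde X, \pi)$ commuting with $p^n$, which is all that the verification above uses.
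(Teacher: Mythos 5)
Your proposal is correct and follows essentially the same route as the paper: both construct the lifting $(\gamma,\dots,\gamma)\circ\bar f\circ\gamma^{-1}$ (your diagonal covering transformation $(\gamma,\dots,\gamma;\mathrm{id})$ is exactly the paper's map $(\gamma,\dots,\gamma)$) and read off its $i$-th coordinate. The paper packages the verification into a commutative diagram rather than the three explicit steps, but the content is identical.
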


\begin{proof}
Let $ \bar f \colon \tilde X \rightarrow F_n(\tilde X, \pi) $
be a lifting of $f$ with $i^{th}$ component $\bar f_i. $	
Consider the map
$$
(\gamma, \dots , \gamma): F_n(\tilde X, \pi) \rightarrow F_n(\tilde X, \pi):
(\tilde x_1, \dots , \tilde x_n) \mapsto (\gamma \tilde x_1, \dots , \gamma \tilde x_n)
$$
and the following commutative diagram (where $1_X$ is the identity on $X$):
\[ 
\begin{tzcd}[column sep=large]
\tilde X \arrow[r,"\gamma^{-1}"] \arrow[d,"p"]
&\tilde X \arrow[r,"{\bar f} "] \arrow[d,"p"]
& F_n(\tilde X, \pi) \arrow[r,"{(\gamma, \dots , \gamma)} "]
\arrow[d,"p^n"] 
& F_n(\tilde X, \pi) \arrow[d,"p^n"] 
\\
X \arrow[r,"1_X"] & X \arrow[r,"f"] & D_n(X) \arrow[r,"1_{D_n(X)}"] & D_n(X)
\end{tzcd} 
\]
It follows that $ (\gamma, \dots , \gamma) \circ
\bar f \circ \gamma^{-1} $
is a lifting of $f.$
The $i^{th}$ component of this lifting is
$ \gamma \bar f_i \gamma^{-1}, $
so this is a lift-factor of $f.$
\end{proof}

Now we can define lift-factor equivalence
on the set of lift-factors of $f,$
denoted by $\sim_{lf},$ by
$$
\bar f_i \sim_{lf} \bar f'_j
\iff
\exists \gamma \in \pi_1(X):
\bar f_i = \gamma \bar f'_j \gamma^{-1}
$$
on the set of lift-factors of $f.$

The equivalence classes are called the
\textit{lift-factor classes.}
We define the \textit{Reidemeister number R(f)}
of $f \colon X \to D_n(X)$ to be the number of
lift-factor classes.
The Reidemeister number is either a natural number or $\infty$ and, 
since the lift-factors of a single-valued map are its liftings to 
the universal covering space, this is the classical concept if $f$ is 
single-valued.

Let $f \colon X \to D_n(X)$ be an $n$-valued map.
We choose basepoints as follows.  First select
some $\tilde x^* \in \tilde X$ and let $x^* =
p(\tilde x^*)$.  Set $x^{(0)} = f(x^*) \in D_n(X)$ 
and choose $\tilde x^{(0)} = (\tilde x_1^{(0)},
\dots , \tilde x_n^{(0)}) \in F(\tilde X, \pi)$
such that 
$$x^{(0)} = 
\{p(\tilde x^{(0)}_1),  \dots , p(\tilde 
x^{(0)}_n)\}.
$$ 
Let $\bar f^* \colon \tilde X \to F_n(\tilde X, \pi)$
be the lifting of $f$ such that $\bar f^*(\tilde x^*) = 
\tilde x^{(0)}$. 
Note that if $X$ is homeomorphic to the circle or an interval, there is still just one such lifting. The universal covering space $\tilde X$ is connected and thus $\bar f^\ast(\tilde X)$ is also connected, so we can apply the usual covering space theory to the component $C$ of $F_n(\tilde X,\pi)$ which contains $\tilde{x}^{(0)}$.

 The lifting
$\bar f^* = (\bar f^*_1, \dots , \bar f^*_n)$
is characterized
by the property that it preserves basepoints, so we
will call it the {\it basic lifting} of the $n$-valued
map $f$. The lift-factors of $\bar f^*$
can be ordered such that
$\bar f^*_i(\tilde x^*) = \tilde x^{(0)}_i.$
For polyhedra $X$ not homeomorphic to the circle or an interval, by Theorem \ref{xicothm} and covering space theory all liftings of $f$
can be written in a unique way in the form
$$
(\alpha_1,\dots,\alpha_n; \eta) \bar f^*
= ( \alpha_1\bar f^*_{\eta^{-1}(1)}, \dots, 
 \alpha_n\bar f^*_{\eta^{-1}(n)})
$$
for some
$(\alpha_1,\dots,\alpha_n; \eta) \in \pi_1(X)^n \rtimes \Sigma_n$.

>From Remark~\ref{remark for circle} it is not difficult to see that if
$X$ is homeomorphic to the circle or an interval, each lifting of $f$ can be written 
uniquely in the same way.

\medskip

It follows that every lift-factor of $f$
can be written as $\alpha \bar f^*_i$
for an $\alpha \in \pi_1(X)$ and
$i \in \{ 1,\dots, n \},$
so the set of lift-factors of $f$
is the set
$$
\{  \alpha \bar f^*_i \mid \alpha \in \pi_1(X),
i \in \{1, \dots , n\} \}.
$$
In the next lemma we will show that
if two lift-factors agree at any single point,
then they must be the same.
\begin{lemma}\label{uniqbasiclc}
Let $\bar f_i, \bar f'_j \colon \tilde X \to \tilde X$
be lift-factors of $f.$
If $\bar f_i(\tilde x) = \bar f'_j(\tilde x)$
for any $\tilde x \in \tilde X,$
then this holds for all $\tilde x \in \tilde X$
so $\bar f_i = \bar f'_j.$
\end{lemma}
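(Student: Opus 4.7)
The plan is to reduce the statement to the standard uniqueness of lifts through a covering map, applied in two stages. Both $\bar f_i$ and $\bar f'_j$ are self-maps of the connected, locally path-connected space $\tilde X$, each lifting some single-valued map into $X$ through the universal cover $p \colon \tilde X \to X$. Since two such lifts that agree at a single point must agree everywhere, it suffices to prove that $p \circ \bar f_i = p \circ \bar f'_j$ as maps $\tilde X \to X$.

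To establish this, I would exploit the intermediate covering $q \colon F_n(X) \to D_n(X)$. The coordinate-wise projection $P \colon F_n(\tilde X, \pi) \to F_n(X)$ sending $(\tilde y_1, \dots, \tilde y_n)$ to $(p \tilde y_1, \dots, p \tilde y_n)$ is well defined, because tuples in the orbit configuration space have pairwise distinct $\pi$-orbits and hence pairwise distinct images in $X$; moreover $q \circ P = p^n$. Composing the liftings $\bar f, \bar f'$ with $P$ therefore yields two maps $\hat f, \hat f' \colon \tilde X \to F_n(X)$, both lifts of $f \circ p$ through $q$. Since $\tilde X$ is simply connected, these two lifts differ by a deck transformation of $q$, i.e.\ by a coordinate permutation $\tau \in \Sigma_n$, so that $p \circ \bar f'_k = p \circ \bar f_{\tau^{-1}(k)}$ for every index $k$.

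To pin down $\tau$, I would plug in the hypothesis at $\tilde x$: applying $p$ to $\bar f_i(\tilde x) = \bar f'_j(\tilde x)$ gives $p\bar f_i(\tilde x) = p\bar f_{\tau^{-1}(j)}(\tilde x)$. Because the entries of $(\bar f_1(\tilde x), \dots, \bar f_n(\tilde x))$ lie in distinct $\pi$-orbits and therefore have distinct $p$-images, this forces $\tau^{-1}(j) = i$, whence $p \circ \bar f_i = p \circ \bar f'_j$ on all of $\tilde X$. Combined with the agreement at $\tilde x$ and the uniqueness of lifts through the universal cover $p$, this gives $\bar f_i = \bar f'_j$, completing the proof.

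The main step requiring care is the identification of the permutation $\tau$, where one must invoke the defining property of the orbit configuration space to rule out spurious coincidences of $p$-images among the coordinates; everything else is a straightforward cascade of lift-uniqueness arguments. Notice that the covering-space arguments above never use path-connectedness of $F_n(\tilde X, \pi)$, only of $\tilde X$ and $F_n(X)$, so the exceptional cases flagged in Remark~\ref{remark for circle} cause no extra trouble.
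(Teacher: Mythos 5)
Your proof is correct, but it is organized differently from the paper's. The paper first writes both lift-factors in terms of the basic lifting, $\bar f_i = \alpha\bar f^*_k$ and $\bar f'_j = \beta\bar f^*_l$ (using the classification of all liftings as $(\alpha_1,\dots,\alpha_n;\eta)\bar f^*$, which rests on Theorem~\ref{xicothm} and Remark~\ref{remark for circle}), then deduces $k=l$ from the fact that $f(p(\tilde x))$ has exactly $n$ points and $\alpha=\beta$ from the fact that a covering transformation is determined by its value at one point. You instead work directly with the two given liftings: you push them down to $F_n(X)$ via the coordinatewise projection $P$, use the simply transitive $\Sigma_n$-action on the fibers of $q$ to relate the two projected liftings by a single permutation $\tau$ (locally constant, hence constant on connected $\tilde X$), pin down $\tau^{-1}(j)=i$ at the point $\tilde x$ using the same key fact that the $n$ coordinates of $\bar f(\tilde x)$ have distinct $p$-images, and finish with uniqueness of lifts through $p$. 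The shared engine is that distinctness of coordinates; the scaffolding differs. Your route buys independence from the classification of liftings and from any connectivity hypothesis on $F_n(\tilde X,\pi)$ or $F_n(X)$, so it handles the interval and circle cases with no special pleading — a genuine, if modest, gain in self-containedness. The paper's route is shorter given that the lifting classification has already been established and is reused throughout Section~\ref{xicosection}. One small imprecision: in the step where the two lifts through $q$ are related by a deck transformation, what you actually need is that $\tilde X$ is connected and that $q$ is a normal covering (so $\Sigma_n$ acts simply transitively on each fiber); simple connectivity of $\tilde X$ is not the relevant hypothesis there, though of course it holds.
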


\begin{proof}
Let $\bar f^* = (\bar f^*_1,\dots,\bar f^*_n)$
be a basic lifting of $f.$
Then there exist $\alpha, \beta \in \pi_1(X)$
and $k,l \in \{ 1, \dots, n\}$
such that $\bar f_i = \alpha \bar f^*_k$
and $\bar f'_j = \beta \bar f^*_l.$
For every $\tilde x \in \tilde X$
we have
$$
f(p(\tilde x)) = p^n(\bar f^*(\tilde x))
= \{ p(\bar f^*_1(\tilde x)), \dots,
p(\bar f^*_n(\tilde x)) \}.
$$
Now assume that $\alpha \bar f^*_k (\tilde x) =
\beta \bar f^*_l (\tilde x)$
for some $\tilde x \in \tilde X,$
then
$$
p(\bar f^*_k(\tilde x))
= p(\alpha \bar f^*_k(\tilde x))
= p(\beta \bar f^*_l(\tilde x))
= p( \bar f^*_l(\tilde x)).
$$
If $k \neq l,$ then
$f(p(\tilde x))$ would be a set with fewer than $n$ points of $X.$
We conclude that $k=l,$ so
$\alpha \bar f^*_k (\tilde x) =
\beta \bar f^*_k (\tilde x).$
Covering transformations are uniquely determined
by the action on one point,
so it follows that $\alpha = \beta$
and hence $\bar f_i = \bar f'_j.$
\end{proof}

In the lemma above we showed that for
$\alpha,\beta \in \pi_1(X) $ and
$ i,j \in \{1, \dots , n\}:$
$$
\alpha \bar f^*_i = \beta \bar f^*_j
\iff
\alpha = \beta \text{ and } i=j,
$$
so we can identify the set of
lift-factors of $f$ with the set
$\pi_1(X)\times \{1,\dots,n\}$.

The basic lifting $\bar f^*$
determines a function
$
\psi_{f} \colon \pi_1(X)
\rightarrow
\pi_1(X)^n \rtimes \Sigma_n
$
by the requirement that 
$$
\forall \gamma \in \pi_1(X):
\psi_{f}(\gamma) \circ \bar f^*
= \bar f^* \circ \gamma.
$$
We write $ \psi_{f}(\gamma) =
(\phi_1(\gamma), \dots, \phi_n(\gamma);
\sigma_\gamma). \label{SigmaAndPhi}$

\begin{lemma}
The function
$
\psi_{f} \colon \pi_1(X)
\rightarrow
\pi_1(X)^n \rtimes \Sigma_n
$
is a homomorphism.
\end{lemma}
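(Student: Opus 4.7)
The plan is to verify directly that $\psi_f(\gamma_1\gamma_2) = \psi_f(\gamma_1)\psi_f(\gamma_2)$ for all $\gamma_1,\gamma_2 \in \pi_1(X)$, by exploiting the defining relation $\psi_f(\gamma)\circ \bar f^* = \bar f^*\circ \gamma$ together with the uniqueness of covering transformations on $F_n(\tilde X, \pi)$.

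First I would compute $\psi_f(\gamma_1\gamma_2)\circ \bar f^*$ two ways. On one side, the defining relation gives $\psi_f(\gamma_1\gamma_2)\circ \bar f^* = \bar f^*\circ (\gamma_1\gamma_2)$. On the other side, viewing $\gamma_1$ and $\gamma_2$ as covering transformations of $p:\tilde X \to X$ (so that $\gamma_1\gamma_2 = \gamma_1\circ \gamma_2$ as self-maps of $\tilde X$), the defining relation can be applied twice to obtain
\[
\bar f^*\circ \gamma_1\circ \gamma_2 = \psi_f(\gamma_1)\circ \bar f^*\circ \gamma_2 = \psi_f(\gamma_1)\circ \psi_f(\gamma_2)\circ \bar f^*.
\]
Thus $\psi_f(\gamma_1\gamma_2)$ and $\psi_f(\gamma_1)\circ\psi_f(\gamma_2)$, viewed as covering transformations of the orbit configuration space, agree after precomposition with $\bar f^*$.

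Next I would invoke uniqueness of covering transformations: evaluating both expressions at the basepoint $\tilde x^*$ gives the same element $\psi_f(\gamma_1)\circ\psi_f(\gamma_2)(\tilde x^{(0)}) = \psi_f(\gamma_1\gamma_2)(\tilde x^{(0)})$ of $F_n(\tilde X,\pi)$, and a covering transformation is determined by its value at a single point. Finally, the semidirect-product multiplication on $\pi_1(X)^n \rtimes \Sigma_n$ recorded after Theorem~\ref{xicothm} is exactly the composition of the corresponding covering transformations, so the equality of covering transformations translates into the group identity $\psi_f(\gamma_1\gamma_2) = \psi_f(\gamma_1)\psi_f(\gamma_2)$.

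The only real subtlety, and thus the main point to check, is the uniqueness step when $X$ is homeomorphic to the circle or an interval, since then $F_n(\tilde X, \pi)$ is disconnected. In this case I would note, as after the definition of the basic lifting, that $\bar f^*(\tilde X)$ lies in a single connected component $C$ of $F_n(\tilde X,\pi)$, so $\psi_f(\gamma)$ preserves $C$ and lies in the covering group $G$ of $p^n:C\to D_n(X)$ described in Remark~\ref{remark for circle}; uniqueness then holds for the connected cover $C\to D_n(X)$ exactly as before. All remaining verifications are routine applications of the defining relation for $\psi_f$.
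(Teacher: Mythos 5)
Your proposal is correct and follows essentially the same route as the paper: both compute $\bar f^*\circ(\gamma_1\gamma_2)$ in two ways via the defining relation and then conclude $\psi_f(\gamma_1\gamma_2)=\psi_f(\gamma_1)\psi_f(\gamma_2)$ from uniqueness (the paper cites the unique representation of liftings as $(\alpha_1,\dots,\alpha_n;\eta)\bar f^*$, you cite the freeness of the covering action at a single point, which amounts to the same thing). Your explicit treatment of the disconnected case for the circle and interval via the component $C$ is a careful touch that the paper handles only implicitly.
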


\begin{proof}
Let $\alpha,\beta \in \pi_1(X)$
and $\tilde x \in \tilde X.$
By the definition of $\psi_{f},$
we have on the one hand
$\bar f^*( \alpha \beta \tilde x)
= \psi_{f}( \alpha \beta )
\circ \bar f^*(\tilde x)$
and on the other hand
$\bar f^*( \alpha \beta \tilde x)
= \psi_{f}( \alpha )
\circ \bar f^*(\beta \tilde x)
= \psi_{f}( \alpha )
\circ \psi_{f}( \beta )
\circ \bar f^*(\tilde x).$
It follows that
$$
\psi_{f}( \alpha \beta)
(\bar f^*_1(\tilde x),\dots,\bar f^*_n(\tilde x))
=\psi_{f}(\alpha)\psi_{f}(\beta)
(\bar f^*_1(\tilde x),\dots,\bar f^*_n(\tilde x)).
$$
Since any lifting of $f$ can be uniquely written in the form $(\alpha_1,\dots,\alpha_n;\eta) \bar{f}^\ast$, this implies that $\psi_{f}( \alpha \beta)= \psi_{f}( \alpha)\psi_{f}(\beta)$
which concludes the proof that $\psi_{f}$ is a homomorphism.
\end{proof}

Note that the lemma implies that  
$\sigma: \pi_1(X) \to \Sigma_n: \gamma \mapsto \sigma_\gamma$
is also a homomorphism.

\begin{theorem}\label{equivwithpsi}
Let $f:X\to D_n(X)$ be an $n$-valued map
with basic lifting $\bar f^*=(\bar f^*_1,\dots, \bar f^*_n).$
Two lift-factors $\alpha \bar f^*_i$
and $\beta \bar f^*_j$ are equivalent
if and only if there exists an element
$\gamma \in \pi_1(X)$ such that
\begin{align*}
\left\{
\begin{array}{ll}
\sigma_\gamma (j) = i 
\\
\alpha
= \gamma \beta \phi_j(\gamma^{-1}).
\end{array}
\right.
\end{align*}
\end{theorem}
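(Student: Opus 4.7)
The plan is to unfold the definition of $\sim_{lf}$, compute the conjugate $\gamma(\beta\bar f^*_j)\gamma^{-1}$ explicitly as a single lift-factor of the form $\alpha'\bar f^*_{i'}$ via the homomorphism $\psi_f$, and then compare with $\alpha\bar f^*_i$ using the bijective labelling of lift-factors by $\pi_1(X)\times\{1,\dots,n\}$ that was established immediately after Lemma \ref{uniqbasiclc}.

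Concretely, $\alpha\bar f^*_i \sim_{lf} \beta\bar f^*_j$ means there is $\gamma\in\pi_1(X)$ with $\alpha\bar f^*_i = \gamma\beta\bar f^*_j\gamma^{-1}$, and evaluated at any $\tilde x\in\tilde X$ the right hand side is $\gamma\beta\,\bar f^*_j(\gamma^{-1}\tilde x)$. The defining identity $\bar f^*\circ\gamma^{-1} = \psi_f(\gamma^{-1})\circ\bar f^*$ combined with the action formula from Theorem~\ref{xicothm} gives, writing $\psi_f(\gamma^{-1}) = (\phi_1(\gamma^{-1}),\dots,\phi_n(\gamma^{-1});\sigma_{\gamma^{-1}})$,
$$\bar f^*_j(\gamma^{-1}\tilde x) \;=\; \phi_j(\gamma^{-1})\,\bar f^*_{\sigma_{\gamma^{-1}}^{-1}(j)}(\tilde x).$$
Since $\sigma$ is a homomorphism (by the note after the previous lemma), $\sigma_{\gamma^{-1}} = \sigma_\gamma^{-1}$, and hence $\sigma_{\gamma^{-1}}^{-1}(j) = \sigma_\gamma(j)$. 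Substituting produces, as an equality of maps $\tilde X\to\tilde X$,
$$\gamma\beta\bar f^*_j\gamma^{-1} \;=\; \bigl(\gamma\beta\phi_j(\gamma^{-1})\bigr)\,\bar f^*_{\sigma_\gamma(j)}.$$

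To conclude, I apply the bijection between lift-factors of $f$ and $\pi_1(X)\times\{1,\dots,n\}$: the equality $\alpha\bar f^*_i = \bigl(\gamma\beta\phi_j(\gamma^{-1})\bigr)\bar f^*_{\sigma_\gamma(j)}$ holds exactly when $i=\sigma_\gamma(j)$ and $\alpha = \gamma\beta\phi_j(\gamma^{-1})$, which yields both directions of the equivalence at once. The only real pitfall is bookkeeping on inverses: the semidirect-product action places $\sigma^{-1}$ inside the index, and one must keep the identity $\sigma_{\gamma^{-1}}^{-1}=\sigma_\gamma$ straight to recover the $\sigma_\gamma(j)$ appearing in the theorem (rather than $\sigma_{\gamma^{-1}}(j)$ or $\sigma_\gamma^{-1}(j)$); once that is in place, the proof is a direct computation.
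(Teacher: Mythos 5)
Your proposal is correct and follows essentially the same route as the paper's proof: both compute the $j$-th component of $\bar f^*\circ\gamma^{-1}$ via $\psi_f(\gamma^{-1})$ and the action formula, use $\sigma_{\gamma^{-1}}^{-1}=\sigma_\gamma$ to identify the index as $\sigma_\gamma(j)$, and conclude by the uniqueness of the representation $\alpha\bar f^*_i$ established after Lemma \ref{uniqbasiclc}. No gaps.
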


\begin{proof}
Note that for all $\gamma \in \pi_1(X):$
\begin{align*}
(\bar f^*_1,\dots,\bar f^*_n)
\circ \gamma^{-1}
&= \psi_{f}(\gamma^{-1})
(\bar f^*_1,\dots,\bar f^*_n)
\\
&= (\phi_1(\gamma^{-1}), \dots,
\phi_n(\gamma^{-1});\sigma_{\gamma^{-1}} )
(\bar f^*_1,\dots,\bar f^*_n)
\\
&= (\phi_1(\gamma^{-1}) \bar f^*_{\sigma_\gamma(1)},
\dots,
\phi_n(\gamma^{-1}) \bar f^*_{\sigma_\gamma(n)}).
\end{align*}
The last equality holds because
$\sigma$
is a homomorphism such that
$\sigma^{-1}_{\gamma^{-1}} = \sigma_\gamma.$
The $j^{th}$ component of
$(\bar f^*_1,\dots,\bar f^*_n)
\circ \gamma^{-1}$
is
$$
\bar f^*_j \circ \gamma^{-1}
=
\phi_j(\gamma^{-1}) \bar f^*_{\sigma_\gamma(j)}.
$$
This expression implies that for
$\alpha, \beta \in \pi_1(X)$ and
$i,j \in \{ 1,\dots, n\}:$
\begin{align*}
\alpha \bar f^*_i \sim_{lf} \beta \bar f^*_j
& \iff \exists \gamma \in \pi_1(X) :
\alpha \bar f^*_i =
\gamma \beta \bar f^*_j \gamma^{-1}
\\
& \iff \exists \gamma \in \pi_1(X) :
\alpha \bar f^*_i =
\gamma \beta
\phi_j(\gamma^{-1}) \bar f^*_{\sigma_\gamma(j)}
\\
& \iff \exists \gamma \in \pi_1(X) :
\left\{
\begin{array}{ll}
\sigma_\gamma (j) = i 
\\
\alpha
= \gamma \beta \phi_j(\gamma^{-1}),
\end{array}
\right.
\end{align*}
where the last equivalence holds because
$
\alpha \bar f^*_i = \beta \bar f^*_j
\iff
\alpha = \beta \text{ and } i=j.
$
\end{proof}

Since we identify the set of lift-factors
with the set $\pi_1(X)\times \{1, \dots, n\},$
the equivalence relation $\sim_{lf}$
gives rise to an equivalence relation on
$\pi_1(X)\times \{1, \dots, n\}.$
We denote the equivalence classes of this relation by $[(\alpha,i)]\label{bracketnotation}.$
Theorem \ref{equivwithpsi} then implies that
\begin{align*}
[(\alpha, i)] = [(\beta,j)]
& \iff \exists \gamma \in \pi_1(X) :
\left\{
\begin{array}{ll}
\sigma_\gamma (j) = i
\\
\alpha
= \gamma \beta \phi_j(\gamma^{-1}).
\end{array}
\right.
\end{align*}

\begin{theorem}\label{Rhtp}
If $n$-valued maps $f, g \colon 
X \to D_n(X)$ are homotopic, then $R(f) = R(g)$.
\end{theorem}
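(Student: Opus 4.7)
The plan is to use the covering homotopy property for $p^n\colon F_n(\tilde X,\pi)\to D_n(X)$ (restricted, in the degenerate cases of Remark~\ref{remark for circle}, to the relevant connected component) to transport lift-factor classes from $f$ to $g$. Let $H\colon X\times I\to D_n(X)$ be a homotopy from $f$ to $g$. Any lifting $\bar f\colon\tilde X\to F_n(\tilde X,\pi)$ of $f$ extends uniquely to a lift $\bar H\colon\tilde X\times I\to F_n(\tilde X,\pi)$ of $H$ with $\bar H(\cdot,0)=\bar f$, and setting $\Phi_{\mathrm{lift}}(\bar f):=\bar H(\cdot,1)$ defines a bijection between the liftings of $f$ and those of $g$ (its inverse arises from lifting the reverse homotopy).

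Next I would show that $\Phi_{\mathrm{lift}}$ descends to a bijection $\Phi$ on lift-factors. Uniqueness of homotopy lifts implies that $\Phi_{\mathrm{lift}}$ is equivariant under both the action of $\pi_1(X)^n\rtimes\Sigma_n$ on $F_n(\tilde X,\pi)$ and the action of $\pi_1(X)$ by covering transformations on $\tilde X$: if $\Phi_{\mathrm{lift}}(\bar f)=\bar g$, then
\[
\Phi_{\mathrm{lift}}\bigl((\alpha_1,\dots,\alpha_n;\sigma)\circ\bar f\circ\gamma^{-1}\bigr)=(\alpha_1,\dots,\alpha_n;\sigma)\circ\bar g\circ\gamma^{-1},
\]
since the right-hand side is a lift of $H$ with the correct initial value. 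Now suppose two liftings $\bar f,\bar f'$ of $f$ share the same $i$-th coordinate, $\bar f_i=\bar f'_i$. Writing $\bar f'=(\alpha_1,\dots,\alpha_n;\sigma)\bar f$ and reading off the $i$-th coordinate gives $\alpha_i\bar f_{\sigma^{-1}(i)}=\bar f_i$; by Lemma~\ref{uniqbasiclc} this forces $\alpha_i=1$ and $\sigma(i)=i$, and then equivariance gives $\Phi_{\mathrm{lift}}(\bar f')_i=\alpha_i\bar g_{\sigma^{-1}(i)}=\bar g_i=\Phi_{\mathrm{lift}}(\bar f)_i$. Hence $\Phi(\bar f_i):=\Phi_{\mathrm{lift}}(\bar f)_i$ is a well-defined bijection on the sets of lift-factors.

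Finally, specializing the equivariance identity to $(\alpha_1,\dots,\alpha_n;\sigma)=(\gamma,\dots,\gamma;\mathrm{id})$ and reading off coordinates yields $\Phi(\gamma L\gamma^{-1})=\gamma\Phi(L)\gamma^{-1}$ for every lift-factor $L$ of $f$ and every $\gamma\in\pi_1(X)$. Thus $\Phi$ intertwines the conjugation actions defining $\sim_{lf}$, inducing a bijection between the lift-factor classes of $f$ and those of $g$, so $R(f)=R(g)$. The main technical point is the well-definedness of $\Phi$ on lift-factors (independent of which lifting one embeds the lift-factor into); once that is secured, the homotopy invariance follows at once from the equivariance argument.
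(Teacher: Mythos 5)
Your argument is correct in substance but takes a genuinely different route from the paper's. The paper fixes the basic liftings $\bar f^*$ and $\bar g^*=\bar H(\cdot,1)$ and uses uniqueness of homotopy lifts once, to prove $\psi_f=\psi_g$; since Theorem \ref{equivwithpsi} expresses $\sim_{lf}$ purely in terms of $\psi_f$ (via $\sigma_\gamma$ and $\phi_j$) on the parametrizing set $\pi_1(X)\times\{1,\dots,n\}$, equality of the $\psi$'s immediately identifies the two partitions and hence the two Reidemeister numbers. You instead construct an explicit bijection $\Phi_{\mathrm{lift}}$ on liftings, check equivariance under $\pi_1(X)^n\rtimes\Sigma_n$ and under precomposition by deck transformations, and push the bijection down to lift-factors. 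Your route is more geometric and bypasses Theorem \ref{equivwithpsi} entirely, at the price of an extra well-definedness verification; the paper's route gets that for free because it never leaves the algebraic parametrization of lift-factors.

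One step in your well-definedness check is incomplete: a given lift-factor can occur as the $i$-th coordinate of one lifting and the $j$-th coordinate of another with $i\neq j$ (for instance $\bar f^*_k$ is the $k$-th coordinate of $\bar f^*$ but the $l$-th coordinate of $(1,\dots,1;(k\,l))\,\bar f^*$), so you must also treat $\bar f_i=\bar f'_j$ with $i\neq j$, not only $i=j$. Fortunately the identical computation applies: writing $\bar f'=(\alpha_1,\dots,\alpha_n;\sigma)\bar f$, the equality $\alpha_j\bar f_{\sigma^{-1}(j)}=\bar f_i$ forces $\sigma^{-1}(j)=i$ and $\alpha_j=1$ by Lemma \ref{uniqbasiclc} and the unique representation of lift-factors, whence $\Phi_{\mathrm{lift}}(\bar f')_j=\alpha_j\bar g_{\sigma^{-1}(j)}=\bar g_i=\Phi_{\mathrm{lift}}(\bar f)_i$. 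With that case added (and the component restriction of Remark \ref{remark for circle} in the interval and circle cases, which you already note), your proof is complete.
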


\begin{proof}
Let $f, g \colon X \to D_n(X)$ be 
homotopic $n$-valued maps, $\bar f^*$ the basic lifting of $f$ and $H \colon X 
\times I \to D_n(X)$ a homotopy such that 
$H(x, 0) = f(x)$ and $H(x, 1) = g(x)$ for all 
$x \in X$.
Lifting $H$ to 
$$
\bar H 
\colon \tilde X 
\times I \to F_n(\tilde X, \pi)
$$ 
with 
$\bar H (\tilde x, 0)=  
\bar f^* (\tilde x)$ 
defines the basic lifting $\bar g^*$ of $g$ by setting 
$\bar H(\tilde x, 1) = \bar g^*(\tilde x)$. If $X$ is homeomorphic to the circle or an interval we can use the usual covering space theory by restricting $F_n(\tilde X, \pi)$ to the connected component $C$ of $F_n(\tilde X, \pi)$ containing $\bar{f}^\ast(\tilde x)$.
Considering the basic liftings
$ \bar f^*$ of $f$ and
$ \bar g^*$ of $g,$
we will show that $\psi_f = \psi_g.$
Let $\gamma \in \pi_1(X)$
and define the map
$$
\gamma \times 1_I \colon
\tilde X \times I \to
\tilde X \times I \colon
(\tilde x, t) \mapsto (\gamma \tilde x,t).
$$
The map
$
\bar H \circ (\gamma \times 1_I):
\tilde X \times I \to
F_n(\tilde X, \pi)
$
is a lifting of $H$ with
\begin{align*}
(\bar H \circ (\gamma \times 1_I) )
(\tilde x,0)
= \bar H (\gamma \tilde x,0)
= \bar f^* (\gamma \tilde x)
= \psi_f(\gamma) \bar f^* (\tilde x),
\\
(\bar H \circ (\gamma \times 1_I) )
(\tilde x,1)
= \bar H (\gamma \tilde x,1)
= \bar g^* (\gamma \tilde x)
= \psi_g(\gamma) \bar g^* (\tilde x)
\end{align*}
for all $\tilde x \in \tilde X.$
The map $\psi_f (\gamma)\circ \bar H$ is also a lifting of $H,$
with
\begin{align*}
(\psi_f (\gamma) \circ \bar H )
(\tilde x,0)
= \psi_f(\gamma) \bar f^*( \tilde x),
\\
(\psi_f (\gamma) \circ \bar H )
(\tilde x,1)
= \psi_f(\gamma) \bar g^*( \tilde x),
\end{align*}
for all $\tilde x \in \tilde X.$
By the uniqueness of lifting property (and the fact that all maps have there image in the same connected component $C$ in the case that $X$ is homeomorphic to a circle or an interval),
it follows that
$ \psi_g(\gamma) \bar g^* (\tilde x)
= \psi_f(\gamma) \bar g^*( \tilde x)$
for all $\tilde x \in \tilde X$
and, applying 
uniqueness of liftings again, this
shows that $\psi_f = \psi_g.$
Theorem \ref{equivwithpsi} then implies that
$R(f) = R(g).$
\end{proof}

\begin{theorem} Let $f \colon X \to D_n(X)$ be an
$n$-valued map, then
$$
\Fix(f) = \bigcup_{\bar f_i} \,
p\Fix(\bar f_i)
$$
where the union is taken over the set of
lift-factors of $f$ and
$\Fix(f) = \{x \in X \mid x \in f(x)\}$.  
\end{theorem}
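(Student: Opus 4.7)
The plan is to prove the two inclusions separately, using the description of the set of lift-factors as $\{\alpha \bar f^*_i \mid \alpha \in \pi_1(X),\ i \in \{1,\dots,n\}\}$ established earlier in this section, together with the basic commuting diagram $p^n \circ \bar f = f \circ p$ for any lifting $\bar f = (\bar f_1,\dots,\bar f_n)$.

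For the inclusion $\bigcup_{\bar f_i} p\Fix(\bar f_i) \subseteq \Fix(f)$, I would take any lift-factor $\bar f_i$ and any $\tilde x \in \Fix(\bar f_i)$. Choose a lifting $\bar f = (\bar f_1,\dots,\bar f_n)$ of $f$ whose $i$th component is $\bar f_i$. Applying $p^n$ to $\bar f(\tilde x)$ yields
\[
f(p(\tilde x)) = p^n(\bar f(\tilde x)) = \{p(\bar f_1(\tilde x)),\dots,p(\bar f_n(\tilde x))\},
\]
and since $\bar f_i(\tilde x) = \tilde x$, the element $p(\tilde x)$ appears in $f(p(\tilde x))$, so $p(\tilde x) \in \Fix(f)$.

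For the reverse inclusion, let $x \in \Fix(f)$, so $x \in f(x)$. Pick any $\tilde x \in p^{-1}(x)$ and any lifting $\bar f = (\bar f_1,\dots,\bar f_n)$ of $f$. The same identity as above gives $f(x) = \{p(\bar f_1(\tilde x)),\dots,p(\bar f_n(\tilde x))\}$, so there exists an index $i$ with $p(\bar f_i(\tilde x)) = x = p(\tilde x)$. Hence there is a covering transformation $\alpha \in \pi_1(X)$ with $\bar f_i(\tilde x) = \alpha \tilde x$, i.e. $(\alpha^{-1} \bar f_i)(\tilde x) = \tilde x$. The key observation is that $\alpha^{-1}\bar f_i$ is itself a lift-factor of $f$: writing $\bar f_i = \gamma \bar f^*_k$ for some $\gamma \in \pi_1(X)$ and $k$, we have $\alpha^{-1}\bar f_i = (\alpha^{-1}\gamma)\bar f^*_k$, which belongs to the set of lift-factors by the characterization recalled above. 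Thus $x = p(\tilde x) \in p\Fix(\alpha^{-1}\bar f_i)$.

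The only step requiring care is verifying that modifying a lift-factor on the left by a covering transformation again yields a lift-factor; this is immediate from the parametrization of lift-factors by $\pi_1(X)\times\{1,\dots,n\}$, so there is no essential obstacle. The argument is otherwise a straightforward unpacking of the covering-space relation between $\bar f$ and $f$.
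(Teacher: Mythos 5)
Your proposal is correct and follows essentially the same route as the paper: both inclusions are obtained from the relation $p^n\circ\bar f=f\circ p$, and the reverse inclusion is completed by adjusting $\bar f_i$ on the left by a covering transformation, which remains a lift-factor by the parametrization $\{\alpha\bar f^*_k\}$. No gaps.
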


\begin{proof} 
Let $\tilde x \in \tilde X$ such that  
$\tilde x = \bar f_i(\tilde x)$. Then 
for $x = p(\tilde x)$, we have
$$ 
x = p(\bar f_i(\tilde x)) 
\in fp(\tilde x) = f(x)
$$
and we have proved that
$$
\bigcup_{\bar f_i} \,
p\Fix(\bar f_i) \subseteq \Fix(f).
$$

Now suppose $x \in \Fix(f)$.   
Choose a lifting
$
\bar f = (\bar f_1,\dots, \bar f_n) \colon
\tilde X \to F_n(\tilde X, \pi)
$
of $f$ and let 
$$
p^n \bar f = \{p \bar f_1, \dots ,
p \bar f_n\} \colon \tilde X \to D_n(X).
$$ 
Since $x$ is a fixed point, 
there exists $i \in \{1, \dots , n\}$ and
$\tilde x \in p^{-1}(x)$ such that 
$p \bar f_i(\tilde x) = x$ so $\bar f_i(\tilde x) \in
p^{-1}(x)$ and thus there exists $\gamma \in 
\pi_1(X)$ such that $\gamma \bar f_i(\tilde x) = 
\tilde x$.
We have proved that
$x \in p\Fix(\gamma \bar f_i)$
and since $\gamma \bar f_i$ is also
a lift-factor of $f,$ we have
\[
\Fix(f) \subseteq 
\bigcup_{\bar f_i} \,
p\Fix(\bar f_i) .  \qedhere
\]
\end{proof}

In the next theorem we will see that an equivalence class of lift-factors 
determines a subset of $\Fix(f)$.

\begin{theorem}\label{lcfpc}
Let $f \colon X  \to  
D_n(X)$ be an $n$-valued map
and $\bar f_i, \bar f'_j$
two lift-factors of $f.$
\begin{enumerate}[label=(\alph*)]
\item If $\bar f_i \sim_{lf} \bar f'_j,$
then $p\Fix(\bar f_i) = p\Fix(\bar f'_j)$.
\item If  
$p\Fix(\bar f_i) \cap 
p\Fix(\bar f'_j) \ne \emptyset$ then $\bar f_i \sim_{lf} \bar f'_j.$
\end{enumerate}
\end{theorem}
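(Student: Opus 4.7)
\textbf{Plan of proof for Theorem \ref{lcfpc}.}

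For part (a), the plan is to unwind the definition of lift-factor equivalence and transport fixed points directly. Suppose $\bar f_i \sim_{lf} \bar f'_j$, so there is $\gamma \in \pi_1(X)$ with $\bar f_i = \gamma\, \bar f'_j\, \gamma^{-1}$. Given any $\tilde x \in \Fix(\bar f'_j)$, a one-line computation
\[
\bar f_i(\gamma \tilde x) \;=\; \gamma\, \bar f'_j\, \gamma^{-1}(\gamma \tilde x)
\;=\; \gamma\, \bar f'_j(\tilde x)\;=\;\gamma \tilde x
\]
shows $\gamma \tilde x \in \Fix(\bar f_i)$, and since $p(\gamma \tilde x) = p(\tilde x)$ this yields the inclusion $p\Fix(\bar f'_j) \subseteq p\Fix(\bar f_i)$. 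The relation $\sim_{lf}$ is symmetric, so the same argument applied to $\gamma^{-1}$ gives the reverse inclusion, establishing the equality.

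For part (b), I would pick witnesses $\tilde x \in \Fix(\bar f_i)$ and $\tilde y \in \Fix(\bar f'_j)$ with $p(\tilde x) = p(\tilde y)$, and then use covering space theory to write $\tilde y = \gamma \tilde x$ for some $\gamma \in \pi_1(X)$. The idea is to build an explicit $\gamma$ that conjugates $\bar f'_j$ to $\bar f_i$. From $\bar f'_j(\gamma \tilde x) = \gamma \tilde x$ one computes
\[
\bigl(\gamma^{-1}\bar f'_j\, \gamma\bigr)(\tilde x) \;=\; \gamma^{-1}\bar f'_j(\gamma \tilde x) \;=\; \gamma^{-1}(\gamma \tilde x)\;=\;\tilde x\;=\;\bar f_i(\tilde x),
\]
so $\gamma^{-1}\bar f'_j\, \gamma$ and $\bar f_i$ agree at the single point $\tilde x$. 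The earlier conjugation lemma guarantees that $\gamma^{-1}\bar f'_j\, \gamma$ is itself a lift-factor of $f$, so both maps are lift-factors of $f$ which agree at a point; Lemma \ref{uniqbasiclc} then forces $\bar f_i = \gamma^{-1}\bar f'_j\, \gamma$, which is precisely $\bar f_i \sim_{lf} \bar f'_j$.

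The only genuine subtlety is the step in part (b) that invokes Lemma \ref{uniqbasiclc}: that lemma applies only to lift-factors of the same $n$-valued map $f$, and without knowing that $\gamma^{-1}\bar f'_j\, \gamma$ is again a lift-factor of $f$ one could not conclude equality from agreement at a point. This is exactly what the earlier conjugation lemma in Section \ref{xicosection} provides, so the two lemmas dovetail cleanly. Beyond that, both parts are essentially bookkeeping with the conjugation formula and the fact that $p(\gamma \tilde x) = p(\tilde x)$.
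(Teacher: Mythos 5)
Your proposal is correct and follows essentially the same route as the paper's proof: part (a) transports fixed points by the conjugating deck transformation and uses symmetry of $\sim_{lf}$, and part (b) produces the deck transformation relating the two preimage fixed points and then applies Lemma \ref{uniqbasiclc} to the lift-factors $\gamma^{-1}\bar f'_j\gamma$ and $\bar f_i$, which agree at a point. Your explicit remark that the conjugation lemma is needed to know $\gamma^{-1}\bar f'_j\gamma$ is again a lift-factor of $f$ (so that Lemma \ref{uniqbasiclc} applies) is exactly the point the paper relies on as well.
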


\begin{proof}
For $(a),$ assume that $\bar f_i = 
\gamma \bar f'_j \gamma^{-1}$
with $\gamma \in \pi_1(X).$
We will show that $p\Fix(\bar f_i) 
= p\Fix(\bar f'_j)$.
Let
$x_0 \in p\Fix(\bar f_i)$.  Then there
exists $\tilde x_0 \in 
p^{-1}(x_0) \cap \Fix( \bar f_i)$ so
$$
\tilde x_0 = \bar f_i(\tilde x_0) = 
\gamma \bar f'_j (\gamma^{-1}\tilde x_0).
$$
Therefore $\gamma^{-1} \tilde x_0 =  \bar f'_j 
(\gamma^{-1}\tilde x_0)$, that is
$\gamma^{-1} \tilde x_0 \in 
\Fix( \bar f'_j),$ and
thus $x_0 = p(\gamma^{-1} \tilde x_0) \in p\Fix( 
\bar f'_j)$.  We have proved that
$p\Fix(\bar f_i) \subseteq 
p\Fix(\bar f'_j)$. 
A symmetric argument
establishes that $p\Fix(\bar f_i) = 
p\Fix(\bar f'_j)$.  

Now, for (b), we assume that $p\Fix(\bar 
f_i) \cap p\Fix(\bar f'_j)$ is nonempty and 
we will find an element $\gamma \in \pi_1(X)$
such that $\bar f_i = \gamma \bar f'_j \gamma^{-1}.$ 
Let 
$x_0 \in p\Fix(\bar f_i) \cap p\Fix(\bar f'_j)$
and choose $\tilde x_0 \in p^{-1}(x_0) \cap \Fix(\bar f_i).$
There exists $\mu \in \pi_1(X)$ such that
$\mu \tilde x_0 \in \Fix(\bar f'_j),$
so $\bar f'_j (\mu \tilde x_0) = \mu \tilde x_0$
and thus
$$
\mu^{-1} \bar f'_j (\mu \tilde x_0)
= \tilde x_0
= \bar f_i(\tilde x_0).
$$ 
Since the lift-factors
$\mu^{-1} \bar f'_j \mu$
and $\bar f_i$ agree at a point,
they are the same by Lemma \ref{uniqbasiclc}, 
so $\mu^{-1} \bar f'_j \mu = \bar f_i.$
Choosing $\gamma = \mu^{-1},$
this completes the proof.
\end{proof}

For a map $f \colon X \to D_n(X),$
we can now define the {\it fixed point
classes of $f$}. To any equivalence class $[\bar{f}_i]$ of a lift-factor,
we associate a fixed point class which is the subset of $\Fix(f)$ given by 
$$
p\Fix( \bar f_i) = \{p(\tilde x) 
\mid \bar 
f_i(\tilde x) = \tilde x\}  \subseteq \Fix(f).
$$
The theorem above shows that this subset does not depend on the chosen 
representative of the equivalence class $[\bar{f}_i]$.

We note that
$\Fix(\bar f_i)$ may be 
empty and therefore a
fixed point class 
$p\Fix(\bar f_i)$ may be
the empty set. Just as in the single-valued case, two empty 
fixed point classes that are determined by different 
lift-factor classes will be regarded as being different 
fixed point classes.

Thus two nonempty fixed point classes $p \Fix(\bar f_i)$
and $p \Fix(\bar f'_j)$ are equal if and only if 
$\bar f_i \sim_{lf} \bar f'_j$ and are disjoint when
$\bar f_i \nsim_{lf} \bar f'_j.$
So $R(f)$,  the number of lift-factor classes, is also the number
of fixed point classes.

\section{The Construction of Dugardein}\label{dugsection}

We continue to denote the universal covering space of $X$ by
$p \colon \tilde X \to X$.  Given an $n$-valued map 
$f \colon X \to D_n(X)$, Gert-Jan Dugardein 
defined a map $\hat f \colon \tilde X \to F_n(\tilde X,\pi)$ 
as follows.\footnote{The construction and its properties were 
presented at the conference ``Nielsen Theory and Related 
Topics" held in Rio Claro, Brazil in July, 2016.  Dugardein 
has made the slides of his talk available to the authors and 
given them permission to include this material in the present 
paper.}   The map $fp \colon \tilde X \to D_n(X)$ lifts to 
$F_n(X)$ and therefore it splits as $(f_1, \dots , f_n) 
\colon \tilde X \to F_n(X)$
where $f_i \colon \tilde X \to X$ for each $i$.  The ordering 
can be chosen so that each $f_i$ lifts to $\tilde f_i \colon 
\tilde X \to \tilde X$ such that $\tilde f_i(\tilde x^*) = 
\tilde x^{(0)}_i$, the basepoints of Section \ref{xicosection}.  
Therefore, $(f_1, \dots , f_n)$ lifts 
to  $\hat f = (\tilde f_1, \dots , \tilde f_n)\colon \tilde X 
\to F_n(\tilde X)$ such that $\hat f(\tilde x^*) = \tilde 
x^{(0)}$.  Since $p^n(\tilde f_1, \dots , \tilde f_n) (\tilde x)\in 
D_n(X)$ for all $\tilde x \in \tilde X$ and hence $p\tilde f_i (\tilde x)\ne p\tilde f_j(\tilde x)$ for $i \ne j$, 
we may consider $\hat f$ as a map $\hat f:\tilde X \to 
F_n(\tilde X,\pi)$. 

It is clear from the definition that $\hat f:\tilde X \to 
F_n(\tilde X,\pi)$ is a lifting of $f:X\to D_n(X)$.   The 
basic lifting  
$\bar f^*$ is also such a lifting and $\hat f(\tilde x^*) = 
\bar f^*(\tilde x^*) = \tilde x^{(0)}$, so they are the same map.   
Moreover, since $(\tilde f_1,\dots,\tilde f_n)$ and $(\bar 
f^*_1,\dots,\bar f^*_n)$ are two splittings of $\bar f = \hat f$ 
that correspond at the basepoints, then $\tilde f_i = \bar f^*_i$ for 
$i = 1, \dots, n$.  Consequently, the results of this section, 
that concern the subsets $p\Fix(\alpha_i \tilde f_i)$ of 
$\Fix(f)$, apply as well to the fixed point classes that were
defined in 
Section \ref{xicosection} as the sets $p\Fix(\alpha_i \bar f^*_i)$.

For a map $f \colon X \to D_n(X)$, employing the 
definition of Schirmer in  \cite{s}
we will say that $x_0, x_1 \in 
\Fix(f)$ are \emph{S-equivalent} if there is a map $c \colon 
I = [0, 1] \to X$ from $x_0$ to $x_1$ such that, for the splitting 
$fc = \{c_1, \dots , c_n\} \colon I \to D_n(X)$, some $c_k$ 
is a path from $x_0$ to $x_1$ and $c_k$ is homotopic to $c$ 
relative to the endpoints.  

\begin{theorem}[Dugardein] \label{dugthm}
Fixed points $x_0, x_1$ of $f 
\colon X \to D_n(X)$ are S-equivalent if and only if there 
exists $i \in \{1, \dots , n\}$ and $\alpha_i \in \pi_1(X)$ 
such that $x_0, x_1 \in p\Fix(\alpha_i \tilde f_i)$.
\end{theorem}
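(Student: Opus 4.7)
\bigskip

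\noindent\textbf{Proof plan.} The plan is to prove both implications by lifting paths to $\tilde X$ and exploiting the splitting of $f\circ c$ into the coordinate maps $f_1,\dots,f_n\colon \tilde X\to X$ furnished by the construction of $\hat f = \bar f^{*} = (\tilde f_1,\dots,\tilde f_n)$. The key observation that drives both directions is that, since $p^n \hat f = fp$, whenever $\tilde c\colon I\to \tilde X$ is a lift of a path $c\colon I\to X$, the composite $f\circ c = f\circ p\circ \tilde c$ is naturally split by the $n$ continuous paths $f_1\tilde c,\dots,f_n\tilde c\colon I\to X$.

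For the direction ($\Leftarrow$), I would start with $x_0,x_1\in p\Fix(\alpha_i\tilde f_i)$ and pick lifts $\tilde x_0,\tilde x_1\in\tilde X$ with $\alpha_i\tilde f_i(\tilde x_j)=\tilde x_j$ for $j=0,1$. Choose any path $\tilde c$ in $\tilde X$ from $\tilde x_0$ to $\tilde x_1$ and set $c=p\tilde c$. Then $\alpha_i\tilde f_i\tilde c$ is a path in $\tilde X$ from $\tilde x_0$ to $\tilde x_1$, so it is homotopic to $\tilde c$ rel endpoints; pushing this homotopy down by $p$ shows that $c_k := p(\alpha_i\tilde f_i\tilde c) = f_i\tilde c$ is a path from $x_0$ to $x_1$ homotopic to $c$ rel endpoints. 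Since $c_k$ appears in the canonical splitting $f\circ c = \{f_1\tilde c,\dots,f_n\tilde c\}$, this exhibits S-equivalence.

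For the direction ($\Rightarrow$), suppose $c$ is a path from $x_0$ to $x_1$ witnessing S-equivalence, with splitting $\{c_1,\dots,c_n\}$ of $fc$ and $c_k$ the path from $x_0$ to $x_1$ homotopic to $c$ rel endpoints. Lift $c$ to $\tilde c$ in $\tilde X$ starting at some $\tilde x_0\in p^{-1}(x_0)$, and let $\tilde x_1=\tilde c(1)$. The splitting of $fc$ is unique up to ordering, so after reindexing there is an index $i$ with $c_k=f_i\tilde c$. Applying $\tilde f_i$ to $\tilde c$ produces the path $\tilde f_i\tilde c$ in $\tilde X$, which lifts $f_i\tilde c=c_k$, but it starts at $\tilde f_i(\tilde x_0)$, not at $\tilde x_0$. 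Choose the unique covering transformation $\alpha_i\in\pi_1(X)$ with $\alpha_i\tilde f_i(\tilde x_0)=\tilde x_0$; then $\alpha_i\tilde f_i\tilde c$ is the lift of $c_k$ starting at $\tilde x_0$. Because $c_k\simeq c$ rel endpoints, this lift ends at $\tilde x_1$, giving $\alpha_i\tilde f_i(\tilde x_1)=\tilde x_1$. Hence both $\tilde x_0$ and $\tilde x_1$ are fixed by $\alpha_i\tilde f_i$, so $x_0,x_1\in p\Fix(\alpha_i\tilde f_i)$.

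The main obstacle is the bookkeeping in the $(\Rightarrow)$ direction: correctly identifying the index $i$ so that the Schirmer path $c_k$ coincides with $f_i\tilde c$ (using that the splitting of a path in $D_n(X)$ into paths in $X$ is determined once an ordering at $t=0$ is chosen, since $q\colon F_n(X)\to D_n(X)$ is a covering), and then introducing exactly the right covering transformation $\alpha_i$ to convert the coordinate lift $\tilde f_i\tilde c$ into the lift of $c_k$ whose endpoints we can control via the homotopy $c_k\simeq c$. Once this alignment is in place, both implications reduce to standard uniqueness-of-lifting arguments for $p\colon\tilde X\to X$.
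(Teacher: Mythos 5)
Your proposal is correct and follows essentially the same argument as the paper: the ($\Leftarrow$) direction is identical (simple connectivity of $\tilde X$ gives the rel-endpoint homotopy, and $p\alpha_i\tilde f_i\tilde c = f_i\tilde c$ is identified as one of the $c_k$), and the ($\Rightarrow$) direction differs only in bookkeeping — you lift $c$ and invoke endpoint-preservation for lifts of rel-endpoint-homotopic paths, while the paper lifts the homotopy square $H$ and identifies its bottom edge with $\alpha_i\tilde f_i\tilde c$; both hinge on the same observation that $c_k$ is singled out among the $c_j$ as the unique one starting at $x_0$.
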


\begin{proof} Suppose $x_0, x_1 \in p\Fix(\alpha_i \tilde 
f_i)$ so there exist $\tilde x_0 \in p^{-1}(x_0), \tilde x_1 
\in p^{-1}(x_1)$ in $\tilde X$ such that $\tilde x_0 = 
\alpha_i \tilde f_i(\tilde x_0)$ and $\tilde x_1 = \alpha_i 
\tilde f_i(\tilde x_1)$.  Let $\tilde c \colon I \to \tilde 
X$ be a path from $\tilde x_0$ to $\tilde x_1$, then $\tilde 
c$ is homotopic to $\alpha_i \tilde f_i \tilde c$ relative to 
the endpoints by a homotopy $\tilde H \colon I \times I \to 
\tilde X$.  Let $p(\tilde c) = c$, then $fc \colon I \to 
D_n$ splits as $fc = \{c_1, \dots , c_n\}$.  We have a 
commutative diagram:

\[
\begin{tzcd}[column sep=large]
& \tilde X \arrow[d,"p"] 
\arrow[r,"{(f_1,\dots, f_n)}"]
& F_n(X) \arrow[d,"q"] \\ 
I \arrow[r,"c"] \arrow[ru,"\tilde c"] & X \arrow[r,"f"] & D_n(X)
\end{tzcd}
\]

\noindent 
Now $p \tilde H \colon I \times I \to X$ is a homotopy between 
$c$ and $p \alpha_i \tilde f_i \tilde c$.  On the other hand,
$$
p \alpha_i \tilde f_i \tilde c = p \tilde f_i \tilde c = 
f_i \tilde c \in q(f_1, \dots , f_n) \tilde c = fc
$$
so $p \alpha_i \tilde f_i \tilde c = c_k$ for some $k \in 
\{1, \dots , n\}$ and therefore $x_0$ and $x_1$ are S-equivalent.

If fixed points $x_0, x_1$ of $f \colon X \to D_n(X)$ are 
S-equivalent, then there is a map $c \colon I \to X$ from $x_0$ 
to $x_1$ such that, for the splitting $fc = \{c_1, \dots , c_n\} 
\colon I \to D_n(X)$, some $c_k$ is a path from $x_0$ to $x_1$ 
and $c_k$ is homotopic to $c$ relative to the endpoints.  Let 
$H \colon I \times I \to X$ be a homotopy from $c_k$ to $c$ 
relative to the endpoints, that is, $H(0, t) = x_0, H(1, t) = 
x_1$ for all $t \in I$ and $H(s, 0) = c_k(s), H(s, 1) = c(s)$ for 
all $s \in I$.  Choose some $\tilde x_0 \in p^{-1}(x_0)$.  Since 
$(\tilde f_1, \dots , \tilde f_n)$ is a lifting of $f$ and $x_0$ is 
a fixed point of $f$, then there exists $i \in \{1, \dots , n\}$ 
such that $\tilde f_i(\tilde x_0) \in p^{-1}(x_0)$ and therefore 
$\alpha_i \in \pi_1(X)$ such that $\alpha_i \tilde f_i(\tilde x_0) = 
\tilde x_0$.  Let $\tilde H \colon I \times I \to \tilde X$ be 
the lifting of $H$ to $\tilde x_0$ such that $\tilde H(0, t) = 
\tilde x_0$ for all $t \in I$.  
Define $\tilde x_1 = \tilde H(1, 0)$ and thus 
$\tilde H(1, t) = \tilde x_1$ for all $t \in I$.  Note that $p(\tilde{x}_1)=x_1$.

The path $\tilde{c}$, which is defined as the restriction of $\tilde{H}$ to $I \times \{1\}$ is a lifting of the path $c$ starting at $\tilde{x}_0$.

We claim that the path $\tilde{c}'$, defined as $\tilde H$ restricted to $I\times \{0\}$, is the path $\alpha_i \tilde{f}_i\tilde{c}$.
For this, it is enough to prove that $\alpha_i \tilde{f}_i\tilde{c}$ is a path starting at $\tilde{x}_0$ and 
is a lifting of the path $c_k$. The starting point of $\alpha_i \tilde{f}_i\tilde{c}$ is 
$\alpha_i \tilde{f}_i\tilde{c}(0)= \alpha_i \tilde{f}_i(\tilde{x}_0)= \tilde{x}_0$.
As above we have that 
$$
p \alpha_i \tilde f_i \tilde c = p \tilde f_i \tilde c = 
f_i \tilde c \in q(f_1, \dots , f_n) \tilde c = fc.
$$
And so 
$p \alpha_i \tilde f_i \tilde c$ is one of the paths $c_j$. But we know that $p \alpha_i \tilde f_i \tilde c$ starts at $x_0$. Since each of the paths  $c_j$ starts at a different point, we must have that $p \alpha_i \tilde f_i \tilde c$ is equal to the unique path which also starts at $x_0$, which is $c_k$, and this proves our claim.

Since $\tilde{H}$ is a path homotopy between $\tilde{c}$ and $\tilde{c}'=\alpha_i \tilde f_i \tilde c$, it follows that $\tilde{c}(1)=\tilde{c}'(1)$, so
\[ \tilde{x}_1= \tilde{c}(1)= \tilde{c}'(1)= \alpha_i \tilde f_i (\tilde{x}_1)\]
which shows that $\tilde{x}_1 \in \Fix(\alpha_i \tilde f_i)$. So we have proved that $x_0, x_1 \in p\Fix(\alpha_i \tilde f_i)$. 

\end{proof}

Thus by Theorem \ref{dugthm} the fixed point classes 
$p\Fix(\alpha_i \bar f^*_i)$ defined in
Section \ref{xicosection} are the same subsets of $\Fix(f)$ as those
of Schirmer in \cite{s}.  Schirmer in \cite{s} 
defined the {\it Nielsen number} $N(f)$ of an 
$n$-valued map to be the number of fixed point 
classes of non-zero index.
By Theorem \ref{lcfpc}
we can conclude that

\begin{proposition}\label{Cfineq}
For any $n$-valued map 
$f \colon X \to D_n(X)$, we have $N(f) \le R(f)$.
\end{proposition}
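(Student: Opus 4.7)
The plan is to prove this by a direct counting argument, using the identifications of fixed point classes already set up in Sections \ref{xicosection} and \ref{dugsection}. The strategy is: exhibit the Nielsen number as counting a subset of the fixed point classes whose total cardinality has already been shown to equal $R(f)$.

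First I would recall the identifications. By Theorem \ref{dugthm} together with the remark that $\tilde f_i = \bar f^*_i$, Schirmer's fixed point classes, which are the classes used to define $N(f)$, coincide with the subsets $p\Fix(\bar f_i) \subseteq \Fix(f)$ associated to lift-factors of $f$. By Theorem \ref{lcfpc}, two lift-factors yield the same subset $p\Fix(\bar f_i)$ exactly when they are lift-factor equivalent, and yield disjoint subsets otherwise. Combined with the convention stated just before the proposition — that distinct lift-factor classes giving rise to empty $p\Fix$ still count as distinct (empty) fixed point classes — this gives a bijection between lift-factor classes and fixed point classes of $f$. Consequently the total number of fixed point classes of $f$ equals $R(f)$.

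The second step is the straightforward comparison. Since $N(f)$ is by definition the number of fixed point classes of nonzero fixed point index, and the fixed point index of an empty set is zero, every class contributing to $N(f)$ must be nonempty. Hence $N(f)$ counts a (possibly proper) subset of the full collection of fixed point classes, whose cardinality is $R(f)$, yielding $N(f) \le R(f)$.

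There is essentially no obstacle here: the substantive work has already been done in Theorems \ref{dugthm} and \ref{lcfpc}. The only point requiring care is the bookkeeping convention that distinguishes empty fixed point classes coming from inequivalent lift-factors, which ensures that the bijection between lift-factor classes and fixed point classes is strict and the count $R(f)$ really is an upper bound rather than a lower bound on the number of nonempty classes.
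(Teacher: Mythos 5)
Your proposal is correct and follows essentially the same route as the paper: identify Schirmer's classes with the sets $p\Fix(\bar f_i)$ via Theorem \ref{dugthm}, use Theorem \ref{lcfpc} (plus the convention on empty classes) to see that the number of fixed point classes equals $R(f)$, and observe that $N(f)$ counts only those of nonzero index. The paper states this almost without comment, so your more explicit bookkeeping is a faithful elaboration of its argument.
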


\section{Computation of the Reidemeister number}\label{compreidsection}

In the previous section, we proved that
the Reidemeister number of an $n$-valued map is
an upper bound for the Nielsen number.
In this section, we will present a method for computing the Reidemeister
number and we will demonstrate this method by means of the following example
of a 3-valued map on the torus $T^2=S^1\times S^1.$ 
(We will view $S^1$ as being  the set of complex numbers of modulus 1). 

\medskip

\noindent Consider the maps $\bar{f}^\ast_i:\R^2 \to \R^2$ ($i=1,2,3$) defined by 
\begin{align*}
\bar{f}^\ast_1 (t,s)&= (\frac{t}{2},-s),
\\
\bar{f}^\ast_2 (t,s)&= (\frac{t+1}{2},-s),
\\
\bar{f}^\ast_3 (t,s)&= (-t,-s+\frac{1}{2}).
\end{align*}
These maps induce a 3-valued map on the torus: 
\[ f: T^2 \multimap T^2: p(t,s) \mapsto \left\{ 
p(\frac{t}{2},-s), p (\frac{t+1}{2},-s), p(-t,-s+\frac{1}{2}) \right\},\]
where $p:\R^2 \to T^2: (t,s) \mapsto \left( e^{2 \pi i t}, e^{2 \pi i s} \right)$ is the  universal covering space.
If we choose basepoints $\tilde{x}^*=(0,0)$, $x^*=p(0,0)$,
$x^{(0)} = f(x^*) = \left\{ 
p(0,0), p (\frac{1}{2},0), p(0,\frac{1}{2}) \right\}$
and
$\tilde x^{(0)}=\left( (0,0),\, (\frac{1}{2},0),\,(0,\frac{1}{2}) \right),$
then $\bar{f}^* = (\bar{f}^*_1, \bar{f}^*_2,  \bar{f}^*_3):\R^2 \to F_3(\R^2, \pi_1(T^2) )$
is the basic lifting of $f.$
We start by computing the fixed points of $f.$ A point $p(t,s)\in T^2$ is a fixed point of $f$
if and only if
$p(t,s)=p(\frac{t}{2},-s)$
or $p(t,s)=p (\frac{t+1}{2},-s)$
or $p(t,s)=p(-t,-s+\frac{1}{2}).$

We consider the three cases separately.
For the first case, we have
\begin{align*}
p(t,s)=p\big(\frac{t}{2},-s\big)
&\iff \exists k,m \in \mathbb{Z}:
t = \frac{t}{2} + k \text{ and } s = -s + m
\\
&\iff \frac{t}{2} \in \mathbb{Z} \text{ and } 2s \in \mathbb{Z}
\end{align*}
The fixed points we find in this way are $ p(0,0)$ and $p(0,\frac12)$.

Analogously, the requirement that 
$p(t,s) = p(\frac{t+1}{2},-s)$ leads to the same fixed points $ p(0,0)$ $(= p(1,0))$ and $p(0,\frac12)$ $(= p(1,\frac12))$.

Finally, requiring that $p(t,s) = p(-t,-s+\frac{1}{2})$ gives rise to four fixed points 
$ p(0,\frac{1}{4})$, $ p(0,\frac{3}{4}) $ ,
$p(\frac{1}{2},\frac{1}{4})$ and $ p(\frac{1}{2},\frac{3}{4}).$

\medskip

So $f$ has six fixed points, of which two are related to both $\bar f^*_1$
and $\bar f^*_2$ and the other four are connected with $\bar f^*_3.$
It is important to note that $\bar f^*_1$ and $\bar f^*_2$ give rise to the same fixed points.\\ 

\medskip

Recall the definition of the homomorphism
$\psi_f:\pi_1(X) \to \pi_1(X)^n \rtimes \Sigma_n:$
$$
\forall \gamma \in \pi_1(X):
\psi_{f}(\gamma) \circ \bar f^*
= \bar f^* \circ \gamma.
$$
We will compute $\psi_f$ for this 3-valued map $f.$
Let $\gamma \in \Z^2$, so $\gamma= (z_1,z_2)$ which acts on $(x,y) \in \R^2$ by translation.
Then 
\begin{align*}
&(\bar{f}^* \circ \gamma) (t,s)  = \bar f^*(t+z_1,s+z_2)
\\&= \left( (\frac{t+z_1}{2},-s-z_2)\,, (\frac{t+z_1+1}{2},-s-z_2),\,
(-t-z_1,-s-z_2 +\frac{1}{2} ) \right).
\end{align*}
When $z_1$ is even, we find that this equals
\[ \left((\frac{z_1}{2}, -z_2),\,(\frac{z_1}{2}, -z_2),\,(-z_1,-z_2) ;\, I \right) \bar{f}^\ast (t,s),\]
with $I$ the identity permutation of $\{1,2,3\}$,
while if $z_1$ is odd, this equals
\[  \left((\frac{z_1-1}{2}, -z_2),\,(\frac{z_1+1}{2}, -z_2),\,(-z_1,-z_2);\, \nu\right) \bar{f}^\ast (t,s),\]
with $\nu(1)=2$, $\nu(2)=1$ and $\nu(3)=3.$
This determines $\psi_f(\gamma)=(\phi_1(\gamma), \phi_2(\gamma), \phi_3(\gamma);
\sigma_\gamma)$ for all $\gamma=(z_1,z_2)\in \mathbb{Z}^2:$ 
\begin{center}
\begin{tabular}{r|c|c}
& $z_1$ even & $z_1$ odd
\\ \hline
$\phi_1(z_1,z_2)$ & $(\frac{z_1}{2}, -z_2)$ & $(\frac{z_1-1}{2}, -z_2)$
\\
$\phi_2(z_1,z_2)$ & $(\frac{z_1}{2}, -z_2)$ & $(\frac{z_1+1}{2}, -z_2)$
\\
$\phi_3(z_1,z_2)$ & $(-z_1,-z_2)$ & $(-z_1,-z_2)$
\\
$\sigma_{(z_1,z_2)}$ & $I$ & $\nu = (12)(3)$
\end{tabular}
\end{center}

We now turn back to the general situation. Each time we consider a map $f:X\to D_n(X)$, 
we will assume that a basic lifting $\bar{f}^\ast$ as in Section 2 is fixed and we will also 
consider the induced homomorphism $\psi_f:\pi_1(X) \to \pi_1(X)^n\rtimes \Sigma_n$ with 
$\psi_f(\gamma)= ( \phi_1(\gamma), \ldots, \phi_n(\gamma); \sigma_\gamma)$.

\begin{lemma}\label{onerepr}
Let $f:X\to D_n(X)$ be an $n$-valued map with
basic lifting $\bar f^*.$  
Let $i,j\in\{1,\dots,n\}$ and suppose there
exists $\gamma\in \pi_1(X)$ such that
$\sigma_\gamma(j)=i$.
Then for every
$\alpha\in \pi_1(X)$ there exists
$\beta\in \pi_1(X)$ such that
$\alpha \bar f^*_i \sim_{lf} \beta\bar f^*_j.$
\end{lemma}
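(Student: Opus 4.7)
The plan is to reduce the claim directly to Theorem~\ref{equivwithpsi}. That theorem characterizes $\alpha\bar f^*_i \sim_{lf}\beta\bar f^*_j$ as the existence of some $\gamma\in\pi_1(X)$ satisfying both $\sigma_\gamma(j)=i$ and $\alpha = \gamma\beta\phi_j(\gamma^{-1})$. The hypothesis of the lemma supplies an element $\gamma$ realizing the first (combinatorial) condition $\sigma_\gamma(j)=i$, so the task reduces to producing some $\beta\in\pi_1(X)$ for which the second (algebraic) condition $\alpha = \gamma\beta\phi_j(\gamma^{-1})$ also holds, using that same $\gamma$.

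The second condition is linear in $\beta$ inside the group $\pi_1(X)$, so I would simply solve it: set
\[ \beta := \gamma^{-1}\,\alpha\,\bigl(\phi_j(\gamma^{-1})\bigr)^{-1}. \]
With this choice, $\gamma\beta\phi_j(\gamma^{-1}) = \alpha$ by construction, and the given $\gamma$ already satisfies $\sigma_\gamma(j)=i$, so Theorem~\ref{equivwithpsi} yields $\alpha\bar f^*_i \sim_{lf} \beta\bar f^*_j$, as required. There is no real obstacle here; the lemma is essentially an immediate corollary of Theorem~\ref{equivwithpsi}, and the only content is noting that once the permutation constraint $\sigma_\gamma(j)=i$ is met, the twisted-conjugacy style equation can always be solved for $\beta$ because it imposes no further restriction beyond the definition above.
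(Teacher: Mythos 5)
Your proof is correct and is essentially the paper's argument: the paper derives the same conclusion directly from $\bar f^*_j\gamma^{-1}=\phi_j(\gamma^{-1})\bar f^*_{\sigma_\gamma(j)}$ (equivalently, citing Theorem~\ref{equivwithpsi}) and picks $\beta=\gamma^{-1}\alpha\phi_i(\gamma)$, which coincides with your $\beta=\gamma^{-1}\alpha\bigl(\phi_j(\gamma^{-1})\bigr)^{-1}$ since $\phi_i(\gamma)\phi_j(\gamma^{-1})=1$ when $\sigma_\gamma(j)=i$.
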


\begin{proof}
If $\gamma\in \pi_1(X)$ such that
$\sigma_\gamma(j)=i,$ then
$j = \sigma^{-1}_\gamma(i).$
By the definition of $\psi_f,$ we have
$\bar f^* \circ \gamma = \psi_f(\gamma) \bar f^*.$
If we write this in components, we have
$$
(\bar f^*_1 \gamma, \dots, \bar f^*_n\gamma)
= (\phi_1(\gamma)\bar f^*_{\sigma^{-1}_\gamma(1)}, 
\dots, \phi_n(\gamma)\bar f^*_{\sigma^{-1}_\gamma(n)}).
$$
The $i^{th}$ component is $\bar f^*_i\gamma 
= \phi_i(\gamma)\bar f^*_{\sigma^{-1}_\gamma(i)}
= \phi_i(\gamma)\bar f^*_{j}.$
It follows that
$$
\alpha \bar f^*_i 
= \alpha \phi_i(\gamma)\bar f^*_{j}\gamma^{-1}
= \gamma\gamma^{-1}\alpha \phi_i(\gamma)\bar f^*_{j}\gamma^{-1}.
$$
Choosing $\beta = \gamma^{-1}\alpha \phi_i(\gamma),$
we have proved that $\alpha\bar f^*_i \sim_{lf}
\beta \bar f^*_j.$
\end{proof}

For $i,j\in \{1,\dots,n\},$ we will write
$i\sim j$ when there is some $\gamma \in \pi_1(X)$
with $\sigma_\gamma(j)=i.$
This defines an equivalence relation because $\sigma$ is a homomorphism.
The relation $\sim$ divides the basic lifting
$\bar f^* = (\bar f^*_1,\dots,\bar f^*_n)$
into equivalence classes where $\bar f^*_i$
and $\bar f^*_j$ are in the same class
when $i\sim j.$ We will refer to these
classes as the \textit{$\sigma$-classes} of the basic lifting.
\\
\\
Now we compute the $\sigma$-classes for the 3-valued map
$f$ of the example. For $(z_1,z_2)\in \mathbb{Z}^2,$
we showed before that $\sigma_{(z_1,z_2)} = I$
if $z_1$ is even and $\sigma_{(z_1,z_2)} = (12)(3)$
if $z_1$ is odd. Consequently the basic lifting $\bar f^*$ has two
$\sigma$-classes : $\{ \bar f^*_1,\bar f^*_2 \}$
and  $\{ \bar f^*_3\}.$
\\
\\
In general, let $r$ be the number of $\sigma$-classes
and take for every class a representative $\bar f^*_{i_k}$
so that we have the set of representatives
$\{ \bar f^*_{i_1}, \bar f^*_{i_2},\dots,\bar f^*_{i_r} \}.$
By Lemma \ref{onerepr} and Theorem \ref{equivwithpsi},
$$
R(f) = R_1(f) + \cdots + R_r(f),
$$
with $R_k(f)$ the number of equivalence classes
of the restriction of the relation $\sim_{lf}$ to the set
$\{ \alpha \bar f^*_{i_k} \mid \alpha \in \pi_1(X) \}.$
For $i\in\{ 1,\dots,n\},$ define the following finite index
subgroup of $\pi_1(X)$:
$$
S_i = \{ \gamma \in \pi_1(X) \mid \sigma_\gamma(i) = i \}.
$$
For the computation of $R_k(f),$ we restrict $\sim_{lf}$
to the set $\{ \alpha \bar f^*_{i_k} \mid \alpha \in \pi_1(X) \}$
and we find that
\begin{align*}
\alpha \bar f^*_{i_k} \sim_{lf} \beta \bar f^*_{i_k}
&\iff \exists \gamma \in \pi_1(X):
\left\{
\begin{array}{ll}
\sigma_\gamma (i_k) = i_k 
\\
\alpha = \gamma \beta \phi_{i_k}(\gamma^{-1})
\end{array}
\right.
\\
& \iff \exists \gamma \in S_{i_k}:
\alpha = \gamma \beta \phi_{i_k}(\gamma^{-1}).
\end{align*}

Note that, in general, the map $\phi_{i}: \pi_1(X) \to \pi_1(X)$ is not a homomorphism, but that
if we restrict the domain to $S_{i}$, then $\phi_i:S_i \to \pi_1(X)$ will be a homomorphism.
This suggests a Reidemeister relation $\sim_i$ on $\pi_1(X)$ defined by 
\[ \alpha \sim_i \beta \Leftrightarrow \exists \gamma \in S_i: \; \alpha = \gamma \beta \phi_i(\gamma^{-1}).\]
Note that this is not an ordinary Reidemeister relation (or twisted conjugacy relation) since the homomorphism $\phi_i$ 
is not an endomorphism of $S_i$ (see the example below). Nevertheless, we will denote the 
number of equivalence classes of $\sim_i,$ determined by the homomorphism
$\phi_i:S_i\to \pi_1(X),$ by $R(\phi_i)$.
The computation above shows that $\alpha \bar f^*_{i_k} \sim_{lf} \beta \bar f^*_{i_k}$ if and only if $\alpha \sim_{i_k} \beta$.
We conclude that 
\[ R(f) = R_1(f) + R_2(f)+\cdots + R_r(f) = R(\phi_{i_1}) + R( \phi_{i_2}) + \cdots + R(\phi_{i_r}).\]

If we denote the equivalence class of $\gamma\in \pi_1(X)$ for the relation $\sim_{i_k}$
by $[\gamma]_{i_k},$ then we have
$$
\Fix(f) = \bigcup_{k=1}^r \bigcup_{\hspace*{3.5mm}[\gamma]_{i_k}}
p\Fix(\gamma \bar f^*_{i_k})
$$
where the last union is taken over all $[\gamma]_{i_k}$
with $\gamma\in\pi_1(X).$
Note that this union is a disjoint union and some sets may be empty.
\\ 
\\
Applying this to the example, we choose representatives
$\bar f^*_1$ and $\bar f^*_3$ for the $\sigma$-classes (so $i_1=1$ and $i_2=3$).
For the sets $S_{i_k},$ we have
$S_1 = \{ (z_1,z_2) \in \mathbb{Z}^2 \mid z_1 \text{ is even}  \}$
and $S_3 = \mathbb{Z}^2.$ 
Now 
$\phi_1:S_1 \to \mathbb{Z}^2$ is defined by 
$\phi_1(z_1,z_2) = (\frac{z_1}{2}, -z_2)$ and
$\phi_3:S_3\to\mathbb{Z}^2$ by $\phi_3(z_1,z_2) = (-z_1,-z_2).$
We compute $R_1(f):$
\begin{align*}
&(k_1,k_2)\bar f^*_1 \sim_{lf} (l_1,l_2)\bar f^*_1
\\
& \iff  (k_1,k_2) \sim_1 (l_1,l_2) \\
&\iff \exists (z_1,z_2) \in S_1:
(k_1,k_2) = (z_1,z_2)(l_1,l_2)\phi_1(-z_1,-z_2)
\\
&\iff \exists (z_1,z_2) \in S_1:
\left\{
\begin{array}{ll}
k_1 = z_1 + l_1 -\frac{z_1}{2} 
\\
k_2 = z_2 + l_2 + z_2
\end{array}
\right.
\\
&\iff \exists (z_1,z_2) \in S_1:
\left\{
\begin{array}{ll}
2(k_1-l_1) = z_1 
\\
\frac{k_2-l_2}{2} = z_2
\end{array}
\right.
\\
&\iff k_2 \text{ and } l_2 \text{ have the same parity}
\end{align*}
This means that there are two equivalence classes so $R_1(f)=2.$
In terms of fixed point classes, the lift-factors 
$\alpha \bar{f}^\ast_1$ (and also the lift-factors $\alpha \bar{f}^\ast_2$)
give rise to two fixed point classes. 

\medskip

Analogously, for the computation of $R_2(f)$
we find that $(k_1,k_2)\bar f^*_3 \sim_{lf} (l_1,l_2)\bar f^*_3$
if and only if there exists $(z_1,z_2)\in\mathbb{Z}^2$ with
$z_1 = \frac{k_1 - l_1}{2}$ and $z_2 = \frac{k_2 - l_2}{2},$
which means that both $k_1$ and $l_1,$ and $k_2$ and $l_2$
have the same parity. Thus there are four equivalence classes,
so $R_2(f)=4$, which means that the lift-factors $\alpha \bar{f}_3^\ast$ 
give rise to four fixed point classes.

 We conclude that 
 \[ R(f)=R_1(f)+R_2(f)=R(\phi_1) + R(\phi_3)=2+4=6,\]
so $N(f) \leq 6.$

Computing the fixed point classes explicitly leads to two fixed point classes
$$
p\Fix((0,0)\bar f^*_1)= \{ p(0,0)\} \mbox{ and } p\Fix((0,1)\bar f^*_1) = \{p(0,\frac{1}{2}) \}
$$
for the $\sigma$-class $\{ \bar f^*_1,\bar f^*_2 \}$ and four fixed point classes
\[
p\Fix((0,0)\bar f^*_3) = \{ p(0,\frac{1}{4})\},\; p\Fix((0,1)\bar f^*_3)= \{p(0,\frac{3}{4})\},\]
\[p\Fix((1,0)\bar f^*_3) = \{p(\frac{1}{2},\frac{1}{4})\}\mbox{ and }
p\Fix((1,1)\bar f^*_3) = \{ p(\frac{1}{2},\frac{3}{4}) \}
\]
for the $\sigma$-class $\{ \bar f^*_3\}.$
So $f$ has in total six fixed point classes, each class is a singleton.

Because the covering map is a local homeomorphism, the fixed point index of $f$ at each of these points will agree with the fixed point index of the lifts $\bar f_i^*$ at their fixed points. Since these lifts are linear maps with isolated fixed points, each of these indices will be $+1$ or $-1$. Thus each of the fixed point classes of $f$ is essential, and so $N(f)=6$.

\section{The Circle}\label{circlesection}

\medskip

As before we represent $S^1$ as the complex numbers of norm one. We 
define $f \colon S^1 \to D_2(S^1)$ by letting $f(z)$ 
be the two square roots of $z$.  Then the map
$(\bar f_1^\ast, \bar f_2^\ast) \colon \mathbb R \to 
F_2(\mathbb R, \mathbb Z)$ defined by $\bar f_1^\ast(t) 
= \frac t2$ and  
$\bar f_2^\ast(t) = \frac {t + 1}2 \ = \bar f_1^\ast(t) + \frac 
12$ is a lifting of $f$, which we choose as the basic lifting.
Since the covering transformations are the elements $k \in 
\mathbb Z$ acting on $\mathbb R$ by translations $t \mapsto k +t$, 
then $$ \bar f_i^\ast(k+t) =   
\frac k2 + \bar f_i^\ast(t) $$ for $i = 1, 2.$

Thus if $k$ is even, then $\bar f_i^\ast(k+t) = \frac k2+ 
 \bar f_i^\ast(t)$, with $\frac k2 \in \Z$.   If $k$ is odd, then 
$$
\bar f_1^\ast(k+t) = \frac k2 +\bar f_1^\ast(t)  =\frac {k - 1}2 + \frac 12+\bar f_1^\ast(t) 
=\frac {k-1}2+ \bar f_2^\ast(t)
$$ and
$$
\bar f_2^\ast(k+t) = \frac k2 + \bar f_2^\ast(t)  = \frac{k+1}2 - \frac12 + \bar f_2^\ast(t) =
\frac {k + 1}2 + \bar f_1^\ast(t),
$$
with $\frac{k\pm1}2 \in \Z$.
Therefore, $\sigma_k$ is the identity 
permutation of $\{1, 2\}$ if $k$ is even and the other 
permutation if $k$ is odd.  The functions $\phi_i \colon 
\mathbb Z \to \mathbb Z$ are defined by $\phi_1(k) = 
\frac k2$ if $k$ is even and $\phi_1(k) = \frac {k - 1}2$ 
if $k$ is odd and $\phi_2(k) = \frac k2$ if $k$ is even 
and $\phi_2(k) = \frac {k + 1}2$ if $k$ is odd.  In this case it is easy to see 
that the $\phi_i$ are not homomorphisms; for instance
$\phi_1(1) = 0$ but $\phi_1(1 + 1) = 1$.

We will show that $R(f) = 1$. That is, for $\alpha, \beta \in 
\mathbb Z$ and $i,j \in \{1,2\}$, we always have $[(\alpha,i)] = 
[(\beta,j)]$ in the notation of Section~\ref{xicosection}.

In the sequel we will use the results from the previous section.
As $\sigma_1(1)=2$, there is only one $\sigma$--class and therefore
$R(f)=R_1(f)=R(\phi_{i_1})$. We choose $i_1=1$. It is easy to see that $S_1=2 \Z$.
Recall that $R(\phi_1)$ is the number of equivalence classes of the relation $\sim_1$ on $\Z$ where 
\begin{eqnarray*}
\forall k,l\in \Z:\; k \sim_1 l & \Leftrightarrow & 
\exists z \in 2\Z:\; k = z + l - \phi_1(z)\\
& \Leftrightarrow & \exists z \in 2\Z:\; k = z + l - \frac{z}2\\
& \Leftrightarrow & \exists u \in \Z:\; k = l + u.
\end{eqnarray*}
It follows that for all $k,l\in \Z$ we have $k \sim_1 l$ and hence $R(f)=R(\phi_1)=1$.

\medskip

More generally, define an $n$-valued map $f \colon S^1 \to D_n(S^1)$ by
letting $f(z)$ be the set of $n$-th roots of $z^d$ for an
integer $d \ne n$.  Thus $f = \phi_{n, d}$ in the notation
of \cite{br1}.  
Define liftings $\bar f^*_1, \dots, \bar f^*_n$ by setting
\[ \bar f^*_j(t) = \frac{dt+j-1}{n}\cdot \]
Take $\bar f^\ast = (\bar f_1^\ast, \bar f^\ast_2, \ldots, \bar f^\ast_n)$ as the basic lifting of $f.$

We will
prove that $\alpha + \bar f^\ast_i \sim_{lf}  \beta + \bar f^\ast_j$ (that is $[(\alpha,i)]=[(\beta,j)]$) if and only if $d\alpha +
i = d\beta + j \mod |d-n|$ and therefore that $R(f) = |d-n|$.

To compute $\bar f^*_j(t+k)$, divide $dk$ by $n$ to obtain 
integers $q, r$ with $dk = qn + r$ and $0\leq r < n$. Then we have:
\begin{align*} \bar f^*_j(t+k) &= \frac{dt+qn+r+j-1}n = 
\frac{dt+r+j-1}n + q \\
&= \begin{cases} \bar f^*_{j+r}(t) + q & \text{if } 
j+r \le n \\
\bar f^*_{j+r-n}(t) + q+1 & \text{if } j+r > n.
\end{cases} 
\end{align*}
Thus $\sigma_k^{-1}(j)$ is either $j+r$ or $j+r-n$. In particular, 
if $\sigma_k^{-1}(j) \ge j$, then $\sigma_k^{-1}(j) = j+r$. 
Of the two cases in the formula above, we will only need 
the case where $j+r \le n$. 
In this case we have
$$
\sigma_k^{-1}(j) = j + r \qquad \phi_j(k) = q.
$$
Therefore if $j + r \le n$ we compute 
$$
d(\phi_j(k)-k) = d\phi_j(k) - dk =
dq - (qn+r) = (d-n)q - r. 
$$
Now let $\alpha, \beta \in \mathbb Z$ and $i, j \in \{1, \dots, n\}.$  
We will prove that 
$[(\alpha,i)] = [(\beta,j)]$ if and only if $d\alpha + i = 
d\beta + j \mod |d-n|$.  
Assume that $[(\alpha,i)] = [(\beta,j)]$. Since the Reidemeister 
relation is symmetric, we may assume 
that $i\ge j$. Then there is some $k\in \mathbb Z$ 
with $\sigma_k^{-1}(j) = i$ and $\alpha = -k + \beta + \phi_j(k)$ (see Theorem~\ref{equivwithpsi} with $\gamma=-k$).
As above, divide $dk$ by $n$ to obtain $dk = qn+r$. Since 
$i = \sigma_k^{-1}(j) \ge j$, we have $i = j + r$. Since 
$j+r = i \le n$ then
$$
d(\alpha - \beta) = d(\phi_j(k)-k) = (d-n)q-r = (d-n)q - (i-j)
$$ 
and thus
$d\alpha +i = d\beta +j \mod |d-n|$.

For the converse, assume that $d\alpha + i = d\beta + j \mod 
|d-n|$ where $\alpha, \beta \in \mathbb Z$ and $i,j \in 
\{1,\dots,n\}$ with $i\ge j$. Then there is some $q\in\Z$ 
such that $d(\alpha - \beta) = (d-n)q + (j-i)$. Let $r= i-j$, 
and we have $d\alpha - d\beta = dq - nq -r$ so $nq + r$ is
a multiple of $d$ and therefore there exists $k \in \mathbb
Z$ such that $dk = nq + r$.  We have 
$j+r = i \le n$ so 
$$
d(\alpha - \beta) = (d - n)q - r = d(\phi_j(k) - k) 
$$ 
which means that $\alpha - \beta = \phi_j(k) - k$
so $[(\alpha,i)] = [(\beta,j)]$ if $d\neq0.$
In case $d=0,$ it is easy to verify that
$\alpha + \bar f^\ast_i \sim_{lf} \beta + \bar f^\ast_j$
if and only if $i=j$
so if and only if $d\alpha + i = d \beta + j \mod |d-n|.$

\medskip

We have proved that $[(\alpha,i)] = [(\beta,j)]$ if and 
only if $d\alpha + i = d\beta + j \mod  |d-n|$ so
there are $|d - n|$ Reidemeister classes characterized by 
the remainder of $d\alpha + i \mod |d-n|$ and thus $R(f) =
|d - n|$.   The Reidemeister number is a homotopy
invariant by Theorem \ref{Rhtp}.   By Theorem 3.1 of \cite{br1},
every $n$-valued self-map $f:S^1 \to D_n(S^1)$ is homotopic to 
$\phi_{n, d}$ for some integer $d$, and this $d$ is called the 
\emph{degree} of $f$. We have shown:

\begin{theorem}
Let $f:S^1 \to D_n(S^1)$ be an $n$-valued map of degree $d\neq n$. 
Then $R(f) = |d - n|$.
\end{theorem}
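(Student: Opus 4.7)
The plan is to reduce the theorem to computing $R(\phi_{n,d})$ via homotopy invariance, and then to carry out that computation through a careful analysis of the basic lifting $\bar f^*_j(t) = (dt+j-1)/n$ and the associated homomorphism $\psi_f$. By Theorem 3.1 of \cite{br1}, every $n$-valued self-map of $S^1$ is homotopic to some $\phi_{n,d}$, so Theorem \ref{Rhtp} reduces us to the case $f = \phi_{n,d}$ with $d \neq n$.

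For the core calculation, I would work with the explicit lift-factors $\bar f^*_j(t) = (dt+j-1)/n$ and determine $\psi_f(k) = (\phi_1(k),\dots,\phi_n(k);\sigma_k)$ for $k \in \mathbb{Z} = \pi_1(S^1)$. The key device is the division algorithm: writing $dk = qn+r$ with $0 \le r < n$, one computes
\[ \bar f^*_j(t+k) = \bar f^*_{j+r}(t) + q \quad \text{if } j+r \le n, \]
and $\bar f^*_{j+r-n}(t) + q+1$ otherwise. This identifies $\sigma_k^{-1}(j)$ and $\phi_j(k)$; in particular when $j+r \le n$ one gets $\sigma_k^{-1}(j) = j+r$ and $\phi_j(k) = q$, hence $d(\phi_j(k) - k) = (d-n)q - r$. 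Using symmetry of the lift-factor relation to arrange $i \ge j$, I would then translate the condition $[(\alpha,i)] = [(\beta,j)]$ via Theorem \ref{equivwithpsi} (with $\gamma = -k$) into the arithmetic statement $d\alpha + i \equiv d\beta + j \pmod{|d-n|}$.

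For the converse direction, given $d\alpha + i \equiv d\beta + j \pmod{|d-n|}$ with $i \ge j$, I would set $r = i - j$ and choose $q$ so that $d(\alpha - \beta) = (d-n)q - r$; then $dk := nq+r$ is divisible by $d$ (when $d \neq 0$), yielding the desired $k \in \mathbb{Z}$ realizing the equivalence. The case $d = 0$ must be handled separately: the lift-factors are constants and one verifies directly that $\alpha + \bar f^*_i \sim_{lf} \beta + \bar f^*_j$ iff $i = j$, which agrees with the claimed count since $|0 - n| = n$. Finally, the residues $d\alpha + i \pmod{|d-n|}$ clearly attain all $|d-n|$ values as $\alpha$ and $i$ vary, giving $R(f) = |d-n|$.

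The main obstacle I anticipate is bookkeeping: the permutation $\sigma_k$ and the function $\phi_j$ depend non-homomorphically on $k$ through the remainder $r$, so one must carefully choose which of the two cases in the formula for $\bar f^*_j(t+k)$ is active, and verify that the reduction to $i \ge j$ (using symmetry of $\sim_{lf}$) is compatible with that choice. Once this is pinned down, the rest is arithmetic modulo $|d-n|$.
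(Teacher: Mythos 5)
Your proposal follows essentially the same route as the paper's proof: reduction to $\phi_{n,d}$ by homotopy invariance, the explicit lift-factors $\bar f^*_j(t)=(dt+j-1)/n$, the division-algorithm computation of $\sigma_k$ and $\phi_j$, the reduction to the case $i\ge j$ with $\gamma=-k$, the identity $d(\phi_j(k)-k)=(d-n)q-r$, and the separate treatment of $d=0$. The argument is correct as outlined and matches the paper's in all essentials.
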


We can also compute $R(f)$ in the case when $n=d$:
\begin{theorem}
Let $f:S^1 \to D_n(S^1)$ be the $n$-valued map of degree $n$. Then $R(f) = \infty$.
\end{theorem}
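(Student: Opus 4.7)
The plan is to specialize the formulas derived for arbitrary degree $d$ in the preceding theorem to the boundary case $d = n$, where the twist coming from the $\phi_j$ vanishes. First I would substitute $d = n$ into the basic lift-factors $\bar f^*_j(t) = \frac{dt + j - 1}{n}$, obtaining the pure translations $\bar f^*_j(t) = t + \frac{j-1}{n}$. A direct computation then gives $\bar f^*_j(t + k) = k + \bar f^*_j(t)$ for every $k \in \Z$ and every $j \in \{1,\dots,n\}$, so in the semidirect product $\Z^n \rtimes \Sigma_n$ we have $\psi_f(k) = (k, k, \dots, k;\, I)$. In particular $\sigma_k = I$ for every $k$, and each $\phi_j \colon \Z \to \Z$ is simply the identity homomorphism, in sharp contrast to the genuinely non-homomorphic behavior that occurs when $d \ne n$.

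Next I would invoke the $\sigma$-class decomposition from Section \ref{compreidsection}. Since $\sigma_k(j) = j$ for all $k$ and $j$, the equivalence $i \sim j$ forces $i = j$, so the basic lifting splits into $n$ singleton $\sigma$-classes $\{\bar f^*_1\}, \dots, \{\bar f^*_n\}$. Consequently
\[ R(f) \;=\; R(\phi_1) + R(\phi_2) + \cdots + R(\phi_n), \]
and it suffices to show that a single $R(\phi_j)$ is infinite.

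For a fixed $j$ we have $S_j = \Z$ and $\phi_j$ the identity, so the Reidemeister relation reduces to
\[ \alpha \sim_j \beta \;\iff\; \exists\, \gamma \in \Z:\ \alpha = \gamma + \beta + \phi_j(-\gamma) = \gamma + \beta - \gamma = \beta, \]
which is plain equality on $\Z$. Hence every $\sim_j$-class is a singleton, giving $R(\phi_j) = \infty$, and therefore $R(f) = \infty$. There is no real obstacle here; the only point to check carefully is the identification of $\sigma_k$ and $\phi_j$ in this degenerate case, which follows immediately once the lift-factors are rewritten as translations.
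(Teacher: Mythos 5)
Your proposal is correct and follows essentially the same route as the paper: both compute that for $d=n$ the lift-factors are translations, so $\sigma_k = I$ and $\phi_j(k)=k$, and hence the lift-factor equivalence reduces to equality on $\Z$, giving infinitely many classes. The only cosmetic difference is that you package the conclusion via the $\sigma$-class sum $R(f)=\sum_j R(\phi_j)$, while the paper applies Theorem \ref{equivwithpsi} directly.
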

\begin{proof}
We use basic liftings of $f$ as above, letting $d=n$:
\[ \bar f^*_j(t) = \frac{nt + j - 1}{n} = t + \frac{j-1}{n}. \]
For some $k\in \Z$, we will have $\bar f^*_j(t+k) = t+k+\frac{j-1}n = k + \bar f^*_j(t)$, and so $\sigma_k(j) = j$ for each $j$, and $\phi_j(k) = k$ for each $k$. Thus by Theorem \ref{equivwithpsi} we have $\alpha + \bar f^*_j \sim_{lf} \beta + \bar f^*_i$ if and only if $i=j$ and $\alpha = \beta$. Since $\alpha$ and $\beta$ can be any integers, there are infinitely many lift-factor classes, and thus $R(f)= \infty$.
\end{proof}

\section{The Orbit Configuration Space and the Universal Cover}\label{universalsection}
The theory of lifting classes, Reidemeister classes, and twisted conjugacy of a single-valued map $f\colon X \to X$ is defined in terms of the universal cover $\tilde X$ of $X$ and the induced homomorphism $f_\#:\pi_1(X) \to \pi_1(X)$. The most direct generalization to an $n$-valued map $f\colon X \to D_n(X)$ would seem to involve the universal cover of $D_n(X)$ and the induced homomorphism $f_\#: \pi_1(X) \to \pi_1(D_n(X))$. In this section, we discuss these ideas and describe why we have instead opted in Section~\ref{xicosection} to use the orbit configuration space.  
We will show that for manifolds of dimension at least 3, the two approaches are algebraically the same. In dimensions 1 and 2 (as long as $X$ is not the circle), the orbit configuration space approach is a quotient of the universal cover approach, but it still includes all the data necessary to compute the Nielsen theory of an $n$-valued map.

First we briefly review the Galois Correspondence for covering spaces (see \cite[Theorem 1.38]{hatcher}) which states that there is a bijective correspondence between isomorphism classes of connected covering spaces over $X$ and conjugacy classes of subgroups of $\pi_1(X)$. We summarize the specific facts that we will need in a lemma:
\begin{lemma}\label{galois}
Let $A$ and $X$ be connected, locally connected, and semilocally simply connected spaces. Let $u:\tilde A \to A$ be the universal cover of $A$, and let $p:B \to A$ be some other normal connected cover. 
\renewcommand{\theenumi}{\alph{enumi}}
\begin{enumerate}
\item \label{gcsubgroup}There is a covering map $r:\tilde A \to B$ with $u = p\circ r$, and the covering group of $p:B \to A$ is $\pi_1(A) / N$ for some normal subgroup $N \le \pi_1(A)$. If $N$ is trivial, then $r$ is a homeomorphism. 
\item \label{gcuniversal} If $f:X\to A$ is a map and $\tilde f: \tilde X \to \tilde A$ is a lifting of $f$ to universal covers, then there is a lifting $\bar f: \tilde X \to B$ with $\bar f = r \circ \tilde f$.
\item \label{gcinduced} If $f_\#:\pi_1(X) \to \pi_1(A)$ is the induced homomorphism of fundamental groups, then there is an induced homomorphism $\phi: \pi_1(X) \to \pi_1(A)/N$ with $\phi = q \circ f_\#$, where $q$ is the canonical surjection $q:\pi_1(X) \to \pi_1(X)/N$. 
\end{enumerate}
\end{lemma}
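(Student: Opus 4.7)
The plan is to derive all three parts directly from the Galois correspondence (Hatcher, Theorem 1.38), with only minor bookkeeping to assemble the statements in the form needed.

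For part (\ref{gcsubgroup}), I would invoke the Galois correspondence to say that the connected cover $p \colon B \to A$ corresponds to a conjugacy class of subgroups of $\pi_1(A)$, and because $p$ is assumed normal this subgroup, call it $N$, is normal in $\pi_1(A)$, with covering group $\pi_1(A)/N$. To produce $r$, I would apply the lifting criterion (or equivalently the universal property of $\tilde A$) to the map $u \colon \tilde A \to A$: since $\tilde A$ is simply connected, $u_\#(\pi_1(\tilde A)) = 1 \le p_\#(\pi_1(B)) = N$, so $u$ lifts uniquely through $p$ to a map $r \colon \tilde A \to B$ with $p \circ r = u$. Standard arguments show $r$ is itself a covering projection. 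If $N$ is trivial, then $B$ is simply connected; $r$ is then a covering of a simply connected locally nice space by another simply connected space, hence a homeomorphism by uniqueness of the universal cover.

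For part (\ref{gcuniversal}), define $\bar f = r \circ \tilde f$. The verification is a one-line diagram chase: $p \circ \bar f = p \circ r \circ \tilde f = u \circ \tilde f$, which equals $f$ composed with the universal covering projection of $X$ because $\tilde f$ lifts $f$. Hence $\bar f$ is a lift of $f$ through $p$.

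For part (\ref{gcinduced}), I would simply set $\phi = q \circ f_\#$, where $q \colon \pi_1(A) \to \pi_1(A)/N$ is the canonical projection (this is what the statement must mean; the excerpt has a small typo writing $\pi_1(X)/N$). As a composition of homomorphisms, $\phi$ is a homomorphism, which is all that is asserted. If one wishes to identify $\phi$ geometrically with the map induced by composing $f$ with some classifying map to a $K(\pi_1(A)/N,1)$, that would be an extra remark but is not needed for the statement.

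Nothing here is an obstacle: the content is bookkeeping organized around Hatcher's theorem. The only subtle point is the uniqueness statement in (\ref{gcsubgroup}) when $N = 1$, which requires appealing to the fact that the universal cover is unique up to homeomorphism over $A$, so that the covering $r \colon \tilde A \to B$ over the now-simply-connected $B$ must be trivial. I would state this explicitly to avoid any ambiguity.
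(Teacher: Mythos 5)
Your proposal is correct and follows exactly the route the paper intends: the paper gives no proof of this lemma at all, presenting it only as a summary of standard facts from the Galois correspondence (Hatcher, Theorem 1.38), and your argument correctly supplies the standard lifting-criterion and uniqueness-of-universal-cover details. You are also right that the codomain of $q$ in part (c) is a typo and should read $\pi_1(A)/N$.
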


We will apply the lemma above to the setting of maps $f:X\to D_n(X)$, the covers $p^n:F_n(\tilde X,\pi) \to D_n(X)$ and the universal cover $u:\tilde D_n(X) \to D_n(X)$. 

As a preliminary we must establish that $p^n:F_n(\tilde X,\pi) \to D_n(X)$ is a connected cover, and that $D_n(X)$ has the appropriate connectedness properties for covering space theory. We begin with the connectedness of $F_n(\tilde X,\pi)$. Our argument closely resembles a similar argument for $F_n(X)$. The idea is due to Farber, in \cite[Section 8]{farb05}.

Farber's result is important in the topological theory of robot motion planning. The following result, when $X$ is a 1-complex other than the circle or interval, essentially says that any labeled set of $n$ robots moving along a track with junctions can be rearranged without colliding to move to any desired locations.

\begin{lemma}\label{Fnconnected}
Let $X$ be a connected polyhedron not homeomorphic to the interval or circle. Then $F_n(X)$ is path connected.
\end{lemma}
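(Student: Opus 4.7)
The plan is to proceed by induction on $n$. For $n=1$, the space $F_1(X)=X$ is path-connected by hypothesis. For the inductive step, I would show that any two configurations $\mathbf{x}=(x_1,\dots,x_n)$ and $\mathbf{y}=(y_1,\dots,y_n)$ in $F_n(X)$ can be joined by a path in $F_n(X)$, splitting into two cases according to the local structure of $X$.

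In the first case, suppose $X$ contains a point with an open neighborhood homeomorphic to a subset of $\R^2$. Then for any finite subset $S \subset X$, the space $X \setminus S$ remains path-connected, since any path in $X$ between points of $X \setminus S$ can be perturbed off of $S$ inside suitable $2$-dimensional neighborhoods of the points of $S$. I would then move the coordinates of $\mathbf{x}$ to those of $\mathbf{y}$ one at a time: first $x_n$ to $y_n$ along a path in $X \setminus \{x_1,\dots,x_{n-1},y_1,\dots,y_{n-1}\}$, then $x_{n-1}$ to $y_{n-1}$ avoiding the remaining points, and so on. Concatenating these motions produces the required path in $F_n(X)$.

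The second case is when $X$ is $1$-dimensional, i.e., a graph. Since $X$ is connected and not homeomorphic to the interval or the circle, it must contain a vertex $v$ of degree at least $3$. Farber's insight is that near $v$ the incident edges form a tripod, which enables a $Y$-exchange allowing one labeled point to "pass" another without collision: the obstructing point is parked on a third edge while the other moves through $v$, after which the obstructing point returns to its original edge. I would fix a canonical configuration $\mathbf{c}$ with all $n$ labeled points placed at prescribed small distances along the edges meeting $v$, and then show that every $\mathbf{x}\in F_n(X)$ admits a path to $\mathbf{c}$ by sequentially moving each coordinate toward its slot, invoking the $Y$-exchange whenever another point obstructs the motion. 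Since any $\mathbf{x}$ and any $\mathbf{y}$ connect to the same $\mathbf{c}$, the space $F_n(X)$ is path-connected.

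The main obstacle is the second case: establishing that we can always herd a configuration into the canonical form $\mathbf{c}$. Two things must be verified carefully. First, whenever a moving coordinate is blocked by a stationary one, the blocker can be temporarily rerouted onto a side edge at $v$ without creating a cascade of further collisions; this requires choosing the order in which points are moved and the size of the "parking" excursions with care. Second, the local rearrangement near $v$ must be able to realize any prescribed labeling, which is a combinatorial argument using that $Y$-exchanges at $v$ generate the full symmetric group on the $n$ labels. Both points are the substance of Farber's motion-planning argument in \cite[Section 8]{farb05}, which I would adapt to the present setting.
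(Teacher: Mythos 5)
Your second case is essentially the paper's argument (and Farber's): locate an essential vertex $v$, herd the points into a canonical configuration on an incident edge, and realize arbitrary permutations of the labels by $Y$-exchanges at the three-way junction. The genuine gap is in your first case, and hence in the case division itself. From the existence of \emph{one} point of $X$ with a two-dimensional Euclidean neighborhood you infer that $X\setminus S$ is path connected for every finite $S$; that inference is false. Take $X$ to be two disks joined by an arc: this is a connected finite polyhedron, not an interval or a circle, and it has plenty of locally planar points, yet removing the midpoint of the joining arc disconnects it. Correspondingly, the ``move the coordinates one at a time'' scheme breaks down: if $x_1$ sits on the joining arc and $x_n$ must travel from one disk to the other, no path for $x_n$ avoids $x_1$, so one is forced first to move $x_1$ out of the way --- exactly the parking manoeuvre you reserve for the graph case. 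The perturbation argument requires every point of $S$ (not merely some point of $X$) to have local dimension at least two and to fail to be a local cut point, and a general polyhedron need not satisfy this.

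The repair is to abandon the dichotomy and run the graph argument uniformly, which is what the paper does: subdivide so that the given configuration consists of vertices, pass to the (connected) $1$-skeleton --- which still contains an essential vertex since $X$ is not an interval or a circle --- and carry out the herding and $Y$-exchange argument there, for instance by deleting edges down to a spanning tree containing $e$ and two further edges at $v$ and moving the points into $e$ in order of increasing distance from $v$ so that the moving point is never blocked. Your outline of the graph case itself is sound, including the two verifications you flag (collision-free herding and the fact that the transpositions realized by $Y$-exchanges generate $\Sigma_n$). A minor further point: the induction on $n$ announced at the outset is never used, since the argument connects two arbitrary configurations directly.
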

\begin{proof}
The fact that $X$ is not the interval or circle means that some subdivision of $X$ must have a vertex which meets at least 3 edges. Such a vertex is called an \emph{essential} vertex in \cite{farb05}. Let $v$ be an essential vertex of $X$, let $e$ be one of the edges meeting $v$, and let $z = (z_1,\dots,z_n) \in F_n(X)$ be some ordered $n$-configuration with $z_i \in e$ for each $i$.

To show that $F_n(X)$ is connected, it suffices to show that any other ordered $n$-configuration $x = (x_1,\dots,x_n)\in F_n(X)$ can be connected to $z$ by a path in $F_n(X)$. This path in $F_n(X)$ would consist of $n$ paths $\gamma_i$ connecting $x_i$ to $z_i$ such that $\gamma_i(t) \neq \gamma_j(t)$ for any $t$ and all $i\neq j$. We imagine such a set of paths as representing a continuous motion of the points $x_i$ to $z_i$ during which the $n$ points never collide. 

It is clear that the points $\{x_1,\dots,x_n\}$, if we disregard their ordering, can be moved without colliding into the points $\{z_1,\dots,z_n\}$. To achieve this we proceed as follows. First of all, by further subdividing $X$ if needed we can assume that all the points $x_i$ are vertices of $X$. From now on, we replace $X$ by its 1-skeleton and we will construct a path where the $x_i$ move along this 1-skeleton. We can now further delete edges of $X$, different from $e$ and two other edges meeting $v$, until we reach the situation in which $X$ is a tree. We now equip $X$ with the standard metric as a tree (in which every edge has length 1) and order the $x_i$ by increasing distance from $v$. Then we may move the points without colliding into the edge $e$ one at a time starting with those nearest to $v$. (Note this argument shows that $D_n(X)$ is connected for any connected polyhedron, even the circle or interval.)

To show that $F_n(X)$ is connected, it remains only to show that the configuration $(z_1,\dots,z_n)$ can be moved without collision into any permutation of itself. This is accomplished by Farber's algorithm described in detail in \cite{farb05} (a similar procedure is used in \cite{stae11}). Briefly, the points may rearrange without colliding by using the essential vertex as a three-way road junction: Let $e_2$ and $e_3$ be two other edges meeting $v$. If for example $z_3$ and $z_4$ wish to exchange their positions, then first $z_1$ and $z_2$ can move into $e_2$. Then $z_3$ moves into $e_3$, then $z_4$ moves into $e_2$, then $z_3$ moves back into $e$ followed by $z_4$ and finally by $z_2$ and $z_1$. In this way any desired permutation of the $z_i$ can be achieved by noncolliding paths.
\end{proof}

In the following result we generalize Farber's procedure to the setting of the orbit configuration space. There is a  motion-planning interpretation of this theorem: if we have $n$ objects moving along tracks which are arranged like a covering space, for example vertically stacked tracks as in a parking garage, then any labeled set of $n$ robots moving along these tracks can be rearranged into any desired locations without ever colliding or moving directly above or below each other.

\begin{theorem}\label{Fnpiconnected}
Let $X$ be a connected polyhedron not homeomorphic to the interval or circle. Then $F_n(\tilde X,\pi)$ is path connected.
\end{theorem}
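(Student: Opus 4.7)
My approach reformulates the statement via covering space theory. The coordinate-wise projection $p^n\colon F_n(\tilde X,\pi)\to F_n(X)$ is a covering map: it is the restriction of the product covering $\tilde X^n\to X^n$ to the open subset $(p^n)^{-1}(F_n(X))$, which equals $F_n(\tilde X,\pi)$ because the condition $\tilde x_i\ne\alpha\tilde x_j$ for all $\alpha\in\pi_1(X)$ is just the requirement that $p(\tilde x_i)\ne p(\tilde x_j)$. Since $F_n(X)$ is path connected by Lemma~\ref{Fnconnected}, $F_n(\tilde X,\pi)$ is path connected if and only if the monodromy action of $\pi_1(F_n(X))$ on a fiber of $p^n$ is transitive.

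A fiber of $p^n$ over $(x_1,\dots,x_n)$ is in bijection with $\prod_{i=1}^n\pi_1(X,x_i)$ (each coordinate lift can be chosen independently), and the monodromy of a loop $\gamma=(\gamma_1,\dots,\gamma_n)$ in $F_n(X)$ acts coordinate-wise by the element $([\gamma_1],\dots,[\gamma_n])\in\prod_i\pi_1(X,x_i)$. Thus transitivity is equivalent to the surjectivity of the coordinate projection homomorphism $\pi_1(F_n(X),(x_1,\dots,x_n))\to\prod_i\pi_1(X,x_i)$. Because this target is a direct product, it suffices to realize, for each $i$ and each $\alpha\in\pi_1(X,x_i)$, a loop in $F_n(X)$ whose $i$-th coordinate represents $\alpha$ and whose other coordinates are null-homotopic loops at the corresponding $x_j$.

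I would construct such a loop by a choreography: first move the $n-1$ auxiliary coordinates $x_j$ ($j\ne i$) along short paths to temporary positions; then traverse a loop representing $\alpha$ in $X$ on the $i$-th coordinate alone; then reverse the auxiliary paths. The $j$-th coordinate then traces the null-homotopic loop $\eta_j\cdot\eta_j^{-1}$. The constraint is that all $n$ positions must remain pairwise distinct throughout. When $\dim X\ge 2$, a representative of $\alpha$ can be perturbed away from any prescribed finite set of points, so auxiliary coordinates need only be moved to points disjoint from this representative, and the construction is elementary.

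The main obstacle is the one-dimensional case, where no loop can be perturbed off a point on one of its edges. Here I would use the essential-vertex hypothesis (which applies because $X$ is a connected polyhedron, not the interval or circle): I would adapt Farber's three-way-junction maneuver from the proof of Lemma~\ref{Fnconnected} to temporarily route the $n-1$ auxiliary coordinates into the two edges at the essential vertex other than the one carrying $\alpha$'s path, thereby clearing each edge just before the $i$-th coordinate traverses it, and restoring them after. This is the technical heart of the argument, and it is essentially the same essential-vertex trick already deployed in Lemma~\ref{Fnconnected}, now applied to a motion with one designated moving coordinate rather than to a wholesale rearrangement.
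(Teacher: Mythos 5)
Your proof is correct, but it reaches the theorem by a genuinely different route from the paper's, even though the two share the same geometric engine. You observe that the coordinatewise projection $F_n(\tilde X,\pi)\to F_n(X)$ is a covering, and reduce path connectedness of the total space to transitivity of the monodromy, i.e.\ to surjectivity of the homomorphism $\pi_1(F_n(X))\to\prod_{i}\pi_1(X,x_i)$, which you then verify one factor at a time by a park--traverse--unpark choreography. The paper instead works directly in $F_n(\tilde X,\pi)$: it lifts the Farber path of Lemma~\ref{Fnconnected} to bring every coordinate into a deck translate $\gamma_i\tilde z_i$ of a fixed configuration in a lifted edge, and then removes the $\gamma_i$ one coordinate at a time by traversing the loop $\gamma_i$ downstairs while the remaining points dodge through the essential vertex. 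These are two descriptions of the same fact---unwinding $\gamma_1$ is exactly realizing $(\gamma_1^{-1},1,\dots,1)$ in the image of the monodromy---so the technical heart, the three-way-junction maneuver, is identical, and your treatment of it is no less rigorous than the paper's own rather informal description. What your formulation buys is a clean group-theoretic statement (surjectivity onto $\pi_1(X)^n$) that dovetails with the surjection $B_n(X)\twoheadrightarrow\pi_1(X)^n\rtimes\Sigma_n$ discussed later in Section~\ref{universalsection}; what the paper's formulation buys is that it never needs $F_n(X)$ to be locally path connected or any covering-space machinery beyond path lifting. One imprecision to repair: the dichotomy ``$\dim X\ge 2$, perturb the loop'' versus ``$\dim X=1$, use the junction'' is not quite right for polyhedra, since a higher-dimensional polyhedron such as $S^2\vee S^1$ carries loops that cannot be pushed off points of its one-dimensional strata. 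The fix is easy---take a simplicial representative of $\alpha$ and park the auxiliary points off its image (possible whenever $\dim X\ge 2$, since the image lies in the $1$-skeleton), reserving the junction maneuver for the genuinely one-dimensional case, or simply reduce everything to the $1$-skeleton at the outset as the paper does in Lemma~\ref{Fnconnected}.
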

\begin{proof}
We will mimic the argument used in Lemma \ref{Fnconnected}. Let $v$ be an essential vertex $X$ with some incident edge $e$, and choose $\tilde v \in \tilde X$ with $p(\tilde v) = v$ and the edge $\tilde e$ incident at $\tilde v$ with $p(\tilde e) = e$. Let $(\tilde z_1, \dots, \tilde z_n) \in F_n(\tilde X,\pi)$ with $\tilde z_i \in \tilde e$ for each $i$, and we will show that any configuration $(\tilde x_1,\dots,\tilde x_n) \in F_n(\tilde X,\pi)$ can be connected to $(\tilde z_1, \dots, \tilde z_n)$ by a path in $F_n(\tilde X,\pi)$.

We imagine this path in $F_n(\tilde X,\pi)$ as representing a continuous motion of the points $\tilde x_i$ into the points $\tilde z_i$ such that the projections $x_i = p(\tilde x_i)$ never collide in $X$. By first lifting the path obtained in Lemma \ref{Fnconnected}, we can move each of the points $\tilde x_i$ into some covering translations of $\tilde e$, reaching points $\gamma_i\tilde z_i$ for some $\gamma_i \in \pi_1(X)$ such that the projections are noncolliding during the motion.

It remains to show that the configuration $(\gamma_1\tilde z_1, \dots, \gamma_n\tilde z_n)$ can be moved to $(\tilde z_1,\dots,\tilde z_n)$ with noncolliding projections. Since we may achieve any permutation of the $\gamma_i z_i$ by moving points so that their projections move according to the three-way junction at $v$, it will be enough to show that $(\gamma_1\tilde z_1,\gamma_2\tilde z_2,\dots,\gamma_n\tilde z_n)$ can be moved to $(\tilde z_1,\gamma_2\tilde z_2,\dots,\gamma_n\tilde z_n)$ with noncolliding projections. 

Viewing $\gamma_1\in \pi_1(X)$ as a loop provides a path from $\gamma_1\tilde z_1$ to $\tilde z_1$. Again using the three-way junction at $v$ we can move $\gamma_1\tilde z$ along this path to $\tilde z_1$ so that no collisions occur in the projection. (If a collision of projections is about to occur in $e$, the two points can use the three-way junction to exchange positions before continuing.) Thus $(\gamma_1\tilde z_1,\gamma_2\tilde z_2,\dots,\gamma_n\tilde z_n)$ can be moved to $(\tilde z_1,\gamma_2\tilde z_2,\dots,\gamma_n\tilde z_n)$ with noncolliding projections as desired.
\end{proof}

In the case where $X$ is the interval or circle, $F_n(\tilde X,\pi)$ is indeed disconnected.

\begin{example}\label{examp6.4}
Let $X$ be the interval $[0,1]$. Then $\tilde X = X$ 
and $\pi_1(X)$ is trivial, so 
$F_n(\tilde X,\pi) = F_n([0, 1])$. 
We first consider the case $n=2$. It is clear that in this case 
\[ F_2([0,1]) = \{ (x,y)\in [0,1]^2 \;|\; x \neq y\}, \]
with the usual subspace topology from $\R^2$, is disconnected. 

For $n>2$, there is a natural surjection $f:F_n([0,1]) 
\to F_2([0,1])$ which discards the last $n-2$ points. 
Since $F_n([0,1])$ has a disconnected image under $f$, it 
must itself be disconnected.
\end{example}

\begin{example}\label{circledisconnected}
Let $X$ be the circle $S^1$. Then $\tilde X$ may be 
identified with the line $\mathbb R$, and we will show that 
$F_n(\mathbb R, \pi)$ is disconnected. 

We again first consider the case $n=2$. In this case we have:
\[ F_2(\R,\pi) = \{(x,y)\in \R^2 \mid x-y \not\in \Z\}, \]
with the subspace topology from $\R^2$, and this is disconnected. 

For $n>2$, a similar argument as in Example~\ref{examp6.4} shows that $F_n(\R,\pi) $ is also disconnected.
\end{example}

Now we show that $D_n(X)$ has the required connectedness 
properties for classical covering space theory.

\begin{lemma}
If $X$ is a connected finite polyhedron, then $D_n(X)$ 
is connected, locally path connected, and locally simply connected.
\end{lemma}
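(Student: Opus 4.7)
\medskip
\noindent\textbf{Proof plan.}
The plan is to split the statement into its three assertions and handle them separately, using only that $X$ is a connected finite polyhedron (hence locally contractible) and that $q \colon F_n(X) \to D_n(X)$ is a finite covering map, thus a local homeomorphism.

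For connectedness, I would simply invoke the parenthetical observation made during the proof of Lemma \ref{Fnconnected}: the argument given there, which moves the (unordered) points one at a time along the $1$-skeleton of a subdivision of $X$ by ordering them by distance from a chosen base vertex, never requires the points to be distinguishable and so works for any connected polyhedron, including the interval and the circle. Thus $D_n(X)$ is path connected (and in particular connected).

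For the two local properties, I would first note that since $X$ is the geometric realization of a finite simplicial complex it is locally contractible; therefore so is the product $X^n$ and its open subset $F_n(X)$. Given $\{x_1,\dots,x_n\} \in D_n(X)$, choose pairwise disjoint contractible open neighborhoods $U_i$ of the $x_i$ in $X$, which is possible because the $x_i$ are distinct and $X$ is Hausdorff and locally contractible. Then $U_1 \times \cdots \times U_n \subseteq F_n(X)$, and because the $U_i$ are pairwise disjoint, $q$ restricted to $U_1 \times \cdots \times U_n$ is injective, hence a homeomorphism onto an open neighborhood $V$ of $\{x_1,\dots,x_n\}$ in $D_n(X)$. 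Since $V$ is homeomorphic to the contractible space $U_1 \times \cdots \times U_n$, it is both path connected and simply connected, which gives the local path connectedness and local simple connectedness of $D_n(X)$.

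I do not foresee any real obstacle here; the only subtlety is just making sure the $U_i$ are disjoint so that $q$ is injective on their product, which is automatic once the $x_i$ are separated by disjoint neighborhoods in the Hausdorff space $X$.
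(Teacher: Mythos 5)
Your proposal is correct and follows essentially the same route as the paper: connectedness of $D_n(X)$ via the parenthetical remark in the proof of Lemma \ref{Fnconnected}, and the local properties by using the finite covering $q \colon F_n(X) \to D_n(X)$ as a local homeomorphism together with products $U_1 \times \cdots \times U_n$ of small nice neighborhoods in $X$ sitting inside $F_n(X)$. Your explicit verification that $q$ is injective on such a product when the $U_i$ are pairwise disjoint is a slightly more self-contained version of the paper's appeal to the covering map property, but the substance is identical.
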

\begin{proof}

 We have already shown in the proof of Lemma \ref{Fnconnected} 
that $D_n(X)$ is connected.

Since $F_n(X)$ is a finite covering of $D_n(X)$, any point of $D_n(X)$ has a open neighbourhood which is homeomorphic to an open set of $F_n(X)$. Therefore, it is enough to show that $F_n(X)$ is locally path connected and locally simply connected.

Since $X$ is a finite polyhedron, $X$ itself is locally path connected and locally simply connected. 

As $X$ is a Hausdorff space, we have that for all $1\leq i < j \leq n$, the set  $K_{i,j}=\{ (x_1,x_2,\ldots, x_n) \in X^n\;|\; x_i=x_j \}$ is a closed subset of $X^n$. It follows that 
\[ F_n(X) = X^n \backslash \left( \bigcup_{1\leq i < j \leq n} K_{i,j} \right)\]
is an open subset of $X^n$.

Now, let $p=(x_1,x_2, \ldots,x_n)$ be any point of $F_n(X)$. As $F_n(X)$ is an open subset of $X^n$ and $X$ is  locally path connected and locally simply connected, any neighbourhood of $p$ contains a subset $U=U_1\times U_2\times \cdots U_n\subseteq F_n(X)$ 
where each $U_i$ is path connected and simply connected. It follows that $U$ itself is path connected and simply connected, and therefore $F_n(X)$ is locally path connected and locally simply connected, which completes the proof.

\end{proof}

Now we return to the discussion of the relationship between $F_n(\tilde X,\pi)$ and the universal cover $u:\tilde D_n(X) \to D_n(X)$. The group of covering transformations is isomorphic to $\pi_1(D_n(X))$, which is a well studied object: it is the full braid group $B_n(X)$.

When $F_n(\tilde X,\pi)$ is connected, we have a connected covering  $p^n:F_n(\tilde X,\pi) \to D_n(X)$. Thus by  Lemma \ref{galois} \eqref{gcsubgroup} there is a covering map $r:\tilde D_n(X) \to F_n(X,\pi)$ with $u = p^n \circ r$:
\begin{equation}\label{covers}
\begin{tzcd}
\tilde D_n(X) \arrow[r,"r"] \arrow[bend right]{rr}{u}& F_n(\tilde X,\pi) \arrow[r,"p^n"] & D_n(X)
\end{tzcd}
\end{equation}

Since the covering group of $F_n(\tilde X,\pi)$ is $\pi_1(X)^n \rtimes \Sigma_n$, Lemma \ref{galois}~\eqref{gcuniversal} shows that $\pi_1(X)^n \rtimes \Sigma_n$ occurs naturally as a quotient of $B_n(X)$ by some normal subgroup.  By Lemma \ref{galois}~\eqref{gcinduced} there is a homomorphism $\Phi_f: \pi_1(X) \to \pi_1(X)^n \rtimes \Sigma_n$ such that the following diagram commutes:
\begin{equation}\label{homomdiagram}
\begin{tzcd}
 & B_n(X) \arrow[d,two heads,"\bar r"]\\
\pi_1(X) \arrow[ur,"f_\#"] \arrow[r,"\Phi_f"] 
& \pi_1(X)^n \rtimes \Sigma_n
\end{tzcd}
\end{equation}
where the vertical arrow is the quotient homomorphism induced by the covering map $r$.

In terms of the orbit configuration space, this homomorphism $\Phi_f$ plays the same role that the induced homomorphism $f_\#$ plays in terms of the universal covering space. In fact we have already made use of $\Phi_f$ in the previous sections since it agrees with the morphism $\psi_f$ we introduced in Section~\ref{xicosection}, as the next result demonstrates.

\begin{theorem}\label{bigphi}
Let $X$ be a polyhedron not homeomorphic to the circle or interval, and let $f:X\to D_n(X)$ be a map, then $\Phi_f$ is the morphism $\psi_f$. 
\end{theorem}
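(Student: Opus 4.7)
My plan is to show that both homomorphisms satisfy the same characterizing equation, namely the equation that defines $\psi_f$ in Section~\ref{xicosection}, and then invoke uniqueness.

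First, I would unwind the construction of $\Phi_f$ from Lemma~\ref{galois}. Let $\tilde f \colon \tilde X \to \tilde D_n(X)$ be the lift of $f$ to the universal covers, and set $\bar f = r \circ \tilde f \colon \tilde X \to F_n(\tilde X,\pi)$ as in Lemma~\ref{galois}\eqref{gcuniversal}. By compatible choice of basepoints (picking $\tilde f(\tilde x^*)$ in $u^{-1}(x^{(0)})$ so that $r(\tilde f(\tilde x^*)) = \tilde x^{(0)}$, which is possible because basepoints for the universal cover of $D_n(X)$ can be chosen freely in the fiber), this $\bar f$ coincides with the basic lifting $\bar f^*$ of Section~\ref{xicosection}.

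Next, I would recall two pieces of covering space folklore. The first is the classical defining property of $f_\#$ at the level of universal covers: for every $\gamma \in \pi_1(X)$, the deck transformation $f_\#(\gamma)$ of $\tilde D_n(X)$ is the unique one satisfying
\[
f_\#(\gamma) \circ \tilde f \;=\; \tilde f \circ \gamma.
\]
The second is equivariance of the intermediate quotient $r\colon \tilde D_n(X) \to F_n(\tilde X,\pi)$: since this cover is obtained by quotienting by a normal subgroup $N \triangleleft B_n(X)$, every $\beta \in B_n(X)$ descends to $\bar r(\beta) \in B_n(X)/N = \pi_1(X)^n \rtimes \Sigma_n$ with $r \circ \beta = \bar r(\beta) \circ r$.

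Combining these gives
\[
\Phi_f(\gamma) \circ \bar f^* \;=\; \bar r(f_\#(\gamma)) \circ r \circ \tilde f \;=\; r \circ f_\#(\gamma) \circ \tilde f \;=\; r \circ \tilde f \circ \gamma \;=\; \bar f^* \circ \gamma,
\]
which is precisely the equation characterizing $\psi_f(\gamma)$. Since a covering transformation of $p^n \colon F_n(\tilde X,\pi) \to D_n(X)$ is determined by its value at any single point, $\psi_f(\gamma)$ is the unique element of $\pi_1(X)^n \rtimes \Sigma_n$ satisfying this equation, and therefore $\Phi_f(\gamma) = \psi_f(\gamma)$ for every $\gamma$.

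The only real obstacle I foresee is the basepoint bookkeeping: one has to verify that the lift produced abstractly by Lemma~\ref{galois}\eqref{gcuniversal} can be matched, under appropriate choice of lifts of the basepoint, with the concretely chosen basic lifting $\bar f^*$ of Section~\ref{xicosection}. Once that identification is in place, the rest is a straightforward application of the two standard covering-space facts above.
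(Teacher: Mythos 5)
Your proposal is correct and follows essentially the same route as the paper: identify $r\circ\tilde f$ with the basic lifting $\bar f^*$ via basepoint choices, push the defining relation $f_\#(\gamma)\circ\tilde f=\tilde f\circ\gamma$ down through $r$ to obtain $\Phi_f(\gamma)\circ\bar f^*=\bar f^*\circ\gamma$, and conclude by uniqueness of the covering transformation satisfying this equation. The only cosmetic difference is that you phrase the descent step via the equivariance $r\circ\beta=\bar r(\beta)\circ r$ while the paper simply applies $r$ pointwise to $F(\gamma\tilde x)=f_\#(\gamma)F(\tilde x)$; these are the same argument.
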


\begin{proof}
Let $\bar f^* = (\bar f^*_1,\dots,\bar f^*_n)$ be the basic lifting of $f$, and let $F: \tilde X \to \tilde D_n(X)$ be a lifting of $f$ such that the diagram commutes:
\[ 
\begin{tzcd}
 & \tilde D_n(X) \arrow[d,"r"]\\
\tilde X \arrow[ur,"F"] \arrow[r,"\bar f^*"] 
& F_n(\tilde X,\pi)
\end{tzcd}
\]

Since $f_\#$ is the induced homomorphism of $f$ on the fundamental group, we can choose base points in $\tilde D_n(X)$ so that, for any $\gamma\in \pi_1(X)$ and $\tilde x \in \tilde X$, we have:
\[ F(\gamma \tilde x) = f_\#(\gamma) F(\tilde x). \]
Applying $r$ to the above gives:
\[ \bar f^*(\gamma \tilde x) = \Phi_f(\gamma) \bar f^*(\tilde x). \]

We also have by definition of $\psi_f$ that 
\[\bar f^*(\gamma \tilde x) = \psi_f(\gamma) \bar f^*(\tilde x).  \] 
Since the action of $\pi_1(X)^n \rtimes \Sigma_n$ on $F_n(\tilde X,\pi)$ is a covering action, and $\Phi_f(\gamma) \bar f^*(\tilde x) =\psi_f(\gamma) \bar f^*(\tilde x)  $, this means that $\Phi_f(\gamma) = \psi_f(\gamma)$ as desired.
\end{proof}

In the case where $X$ is a manifold of dimension at least 3, the algebra given by the universal covers and the orbit configuration space in diagram \eqref{homomdiagram} are isomorphic:
\begin{theorem}\label{dim3}
Let $X$ be a connected polyhedron which is a smooth manifold of dimension at least 3. Then $\pi_1(D_n(X)) = B_n(X)$ is isomorphic to $\pi_1(X)^n \rtimes \Sigma_n$.
\end{theorem}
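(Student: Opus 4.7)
The plan is to compute both sides via general position plus a Galois-type argument. First, I would show that $\pi_1(F_n(X)) \cong \pi_1(X)^n$. Observe that $F_n(X)$ is the complement inside $X^n$ of the fat diagonal $\Delta = \bigcup_{i<j}\{(x_1,\dots,x_n) : x_i = x_j\}$, and each diagonal $\{x_i = x_j\}$ is a smooth submanifold of $X^n$ of codimension $\dim X \ge 3$. A standard transversality argument then shows the inclusion $F_n(X)\hookrightarrow X^n$ is a $\pi_1$-isomorphism: any loop in $X^n$ can be perturbed off $\Delta$, and any homotopy between two such perturbed loops can itself be perturbed off $\Delta$, since $1$- and $2$-dimensional source objects are generically disjoint from a subset of real codimension $\ge 3$. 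Combined with $\pi_1(X^n) \cong \pi_1(X)^n$ via the coordinate projections, this yields a canonical isomorphism $\pi_1(F_n(X)) \cong \pi_1(X)^n$.

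Second, since $X$ is a manifold of dimension at least $3$ it is in particular not the circle or an interval, so by Lemma~\ref{Fnconnected} the space $F_n(X)$ is connected, and the free $\Sigma_n$-action realizes $q\colon F_n(X)\to D_n(X)$ as a regular covering with deck group $\Sigma_n$. This produces the short exact sequence
$$
1 \longrightarrow \pi_1(F_n(X)) \longrightarrow \pi_1(D_n(X)) \longrightarrow \Sigma_n \longrightarrow 1.
$$
The conjugation action of $\Sigma_n$ on $\pi_1(F_n(X)) \cong \pi_1(X)^n$ is induced by the coordinate-permuting deck transformations on $F_n(X)$, so under the identification above it is precisely the permutation of the $n$ factors.

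Third, I would construct a splitting of this sequence. Choose a smoothly embedded open Euclidean ball $U \subset X$ and $n$ distinct basepoints inside $U$ to serve as the basepoint of $D_n(X)$. Since $\dim X \ge 3$, the configuration space $F_n(U) \cong F_n(\R^{\dim X})$ is simply connected (by the same codimension argument as in step one, applied to the contractible space $U^n$), so the covering $F_n(U)\to D_n(U)$ exhibits $\pi_1(D_n(U)) \cong \Sigma_n$. The inclusion $D_n(U)\hookrightarrow D_n(X)$ then induces a homomorphism $s\colon \Sigma_n \to \pi_1(D_n(X))$, and the restriction of the covering $F_n(X)\to D_n(X)$ over $D_n(U)$ is exactly $F_n(U)\to D_n(U)$, so composing $s$ with the quotient $\pi_1(D_n(X))\to \Sigma_n$ is the identity. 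Hence the sequence splits, identifying $\pi_1(D_n(X))$ with $\pi_1(X)^n \rtimes \Sigma_n$ with the permutation action, which is $B_n(X) \cong \pi_1(X)^n \rtimes \Sigma_n$ as claimed.

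The main technical obstacle is the codimension argument in step one: one must carefully justify that perturbing to general position gives a $\pi_1$-isomorphism on the complement. In the smooth category this is standard, but the fat diagonal is a union of strata rather than a single closed submanifold, so the cleanest route is to remove one diagonal at a time or invoke the fact that a simplicial subcomplex of real codimension $\ge 3$ in a smooth manifold can be avoided by generic maps from any $2$-complex. Everything else is routine covering-space theory.
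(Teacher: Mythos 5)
Your proof is correct and follows essentially the same route as the paper: both rest on the short exact sequence $1 \to \pi_1(F_n(X)) \to \pi_1(D_n(X)) \to \Sigma_n \to 1$ arising from the $\Sigma_n$-covering $F_n(X)\to D_n(X)$, split by including the configuration space of an embedded Euclidean ball $U\subset X$ for which $\pi_1(D_n(U))\cong\Sigma_n$. The only difference is that where you establish $\pi_1(F_n(X))\cong\pi_1(X)^n$ directly by pushing loops and homotopies off the codimension-$\ge 3$ fat diagonal, the paper cites Birman's theorem that the pure braid group $P_n(X)$ is $\pi_1(X)^n$ for smooth manifolds of dimension at least $3$ --- your transversality argument is precisely the standard proof of that cited result.
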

\begin{proof}\hspace*{-1.8mm}\footnote{The authors thank Daciberg Gon\c{c}alves for
suggesting this proof.}
We will use the following algebraic characterization of the semidirect product: If $H,G$, and $N$ are groups and we have a short exact sequence:
\[ 1 \to H \overset{i}\to G \overset{j}\to N \to 1, \]
then $G \cong H \rtimes N$ if the sequence is right-split, i.e. there is a homomorphism $k:N \to G$ such that $j\circ k$ is the identity on $N$.

Let $P_n(X) \subseteq B_n(X)$ be the subgroup of ``pure braids'', those for which the underlying permutation of the strands is trivial. There is a well-known short exact sequence (\cite{han}, page 16):
\[ 1 \to P_n(X) \hookrightarrow B_n(X) \overset{j}\to \Sigma_n \to 1, \]
where the map from $P_n(X)$ to $B_n(X)$ is the inclusion, and $j$ is the underlying permutation of the braid.

It is a classical theorem of Birman \cite[Theorem 1]{birman} that when $X$ is a smooth manifold of dimension 3 or higher, the pure braid group $P_n(X)$ is isomorphic to $\pi_1(X)^n$. (Intuitively: since $X$ has high enough dimension, braid strands can pass through each other and so there is no classical braiding of strands wrapping around one another. The only nontrivial elements of $P_n(X)$ arise when the $n$ strands wrap around holes in the space.) Thus our short exact sequence becomes:
\[ 1 \to \pi_1(X)^n \hookrightarrow B_n(X) \overset{j}\to \Sigma_n \to 1, \]
and we need only find a right-inverse $k:\Sigma_n \to B_n(X)$ of $j$.

Since $X$ is a manifold of dimension 3 or higher, let $U\subset X$ be an open set which is homeomorphic to an open $m$-ball, $m\ge 3$. Let $i:D_n(U) \to D_n(X)$ be the inclusion, which induces a homomorphism on fundamental groups $i_\#:B_n(U) \to B_n(X)$.  Birman's result shows that $P_n(U) \cong \pi_1(U)^n \cong \{1\}$. Then the short exact sequence for $U$ takes the form:
\[ 1 \to 1 \hookrightarrow B_n(U) \to \Sigma_n \to 1 \]
and therefore $B_n(U) \cong \Sigma_n$. Under these isomorphisms, $i_\#$ gives us a homomorphism $k:\Sigma_n \to B_n(X)$.

It remains only to show that $k$ is a right-inverse of $j$, but this is clear: if we begin with a permutation $\sigma\in \Sigma_n$, then $k(\sigma)\in B_n(X)$ is a braid in $X$ which is induced by inclusion from a braid in $U$ with underlying permutation $\sigma$. Thus the underlying permutation of $k(\sigma)$ is $\sigma$, which is to say that $j(k(\sigma))=\sigma$ as desired.
\end{proof}

\section{The Jiang subgroup}\label{jiangsection}

Let $f:X \to D_n(X)$ be an $n$-valued map.  
A homotopy $H \colon X \times I \to D_n(X)$ is a 
\emph{cyclic 
homotopy of $f$} if $H(x, 0) = H(x, 1) = f(x)$ for 
all $x \in X$.  A cyclic homotopy of $f$ will lift to 
a homotopy starting at 
the basic lifting 
$\bar f^* = (\bar f^*_1, \dots, \bar f^*_n) 
\colon \tilde X \to F_n(\tilde X, \pi)$ and ending at
$(\gamma_1 \bar f^*_{\sigma^{-1}(1)}, \dots, \gamma_n \bar 
f^*_{\sigma^{-1}(n)})$, where $\sigma$ is a permutation and
$\gamma_i\in \pi_1(X)$. In this way, from each cyclic
homotopy we obtain an element
$(\gamma_1,\dots,\gamma_n;\sigma) \in 
\pi_1(X)^n \rtimes \Sigma_n$. The \emph{Jiang subgroup
for $n$-valued maps} $J_n(\bar f^*) \subseteq \pi_1(X)^n \rtimes 
\Sigma_n$ is the set of all such elements.  For $n = 1$,
this definition is the same as that of the subgroup
$J(\tilde f)$ of $\pi_1(X)$ introduced by Jiang; see
\cite{j1}, page 30.\footnote{There is an extension of the
Jiang subgroup, due to Gon\c{c}alves \cite{gon}, that applies
to maps $f \colon X \to Y$ and
is quite different from
$J_n(\bar f)$, which concerns only $n$-valued maps, that is,
the case $Y = D_n(X)$.}

\begin{proposition}
The set $J_n(\bar f^*)$ is a subgroup of $\pi_1(X)^n \rtimes 
\Sigma_n$.
\end{proposition}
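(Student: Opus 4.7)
The plan is to verify the three subgroup axioms (containing the identity, closure under products, and closure under inverses) by producing explicit cyclic homotopies whose associated elements realize the identity, products, and inverses in $\pi_1(X)^n \rtimes \Sigma_n$. Throughout, the key tool is that elements of $\pi_1(X)^n \rtimes \Sigma_n$ act on $F_n(\tilde X,\pi)$ as covering transformations, so composing lifts with such elements is compatible with the lifting process.

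For the identity, I would take the constant cyclic homotopy $H(x,t) = f(x)$. Its unique lift starting at $\bar f^*$ is the constant homotopy $\bar H(\tilde x, t) = \bar f^*(\tilde x)$, whose endpoint is $\bar f^*$ itself, corresponding to the identity element $(1,\dots,1; I) \in \pi_1(X)^n \rtimes \Sigma_n$.

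For closure under products, suppose cyclic homotopies $H_1, H_2$ of $f$ yield elements $g_1, g_2 \in \pi_1(X)^n \rtimes \Sigma_n$, with lifts $\bar H_1, \bar H_2$ starting at $\bar f^*$ and ending at $g_1 \bar f^*$ and $g_2 \bar f^*$, respectively. Form the concatenation
\[
(H_1 * H_2)(x,t) = \begin{cases} H_1(x, 2t), & 0 \le t \le 1/2, \\ H_2(x, 2t-1), & 1/2 \le t \le 1, \end{cases}
\]
which is again a cyclic homotopy of $f$. Lifting the first half gives $\bar H_1$, ending at $g_1 \bar f^*$. For the second half, observe that $g_1 \bar H_2$ is a lift of $H_2$ (since $g_1$ acts as a covering transformation) starting at $g_1 \bar f^*$ and ending at $g_1 g_2 \bar f^*$. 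By uniqueness of liftings, this gives the lift of the second half of $H_1 * H_2$, so $H_1 * H_2$ yields $g_1 g_2 \in J_n(\bar f^*)$.

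For inverses, given a cyclic homotopy $H$ yielding $g \in J_n(\bar f^*)$ with lift $\bar H$ ending at $g \bar f^*$, consider the time-reversed cyclic homotopy $H'(x,t) = H(x, 1-t)$. Define $\bar H'(\tilde x, t) = g^{-1} \bar H(\tilde x, 1-t)$; because $g^{-1}$ is a covering transformation, $\bar H'$ is a lift of $H'$, and it satisfies $\bar H'(\tilde x, 0) = g^{-1} g \bar f^*(\tilde x) = \bar f^*(\tilde x)$ and $\bar H'(\tilde x, 1) = g^{-1} \bar f^*(\tilde x)$. Thus $H'$ yields $g^{-1} \in J_n(\bar f^*)$. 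The main (mild) obstacle is simply organizing these three constructions and appealing correctly to the uniqueness of lifts in each case; no deeper difficulty arises because the $\pi_1(X)^n \rtimes \Sigma_n$-action is by covering transformations, which commute in the appropriate sense with the homotopy lifting property.
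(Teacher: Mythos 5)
Your proof is correct and follows essentially the same approach as the paper: both arguments realize products via concatenation of cyclic homotopies and inverses via time-reversal, using the fact that elements of $\pi_1(X)^n \rtimes \Sigma_n$ act as covering transformations so that composing a lift with such an element yields another lift. The only difference is organizational: the paper uses the one-step subgroup test on $H * K^{-1}$ and tracks the semidirect-product coordinates explicitly, whereas you check the three axioms separately and argue abstractly from uniqueness of lifts, which avoids the index bookkeeping.
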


\begin{proof}
Let $(\alpha_1,\dots,\alpha_n;\eta), 
(\beta_1,\dots,\beta_n;\theta) \in J_n(\bar f^*)$. We will 
show that $J_n(\bar f^*)$ is a subgroup by proving that 
$$
(\alpha_1,\dots,\alpha_n;\eta)(\beta_1,\dots,\beta_n;\theta)^{-1} 
\in J_n(\bar f^*).
$$ 
Since $(\alpha_1,\dots,\alpha_n;\eta) \in J_n(\bar f^*)$, 
there is a cyclic homotopy $H$ of $f$ lifting to $n$ homotopies, 
of the $\bar f_i^*$ to $\alpha_i\bar f_{\eta^{-1}(i)}^*$. Similarly 
there is a cyclic homotopy $K$ of $f$ lifting to $n$ homotopies 
of $\bar f_i^*$ to $\beta_i\bar f_{\theta^{-1}(i)}^*$. Equivalently, 
$K$ lifts to homotopies of $\beta_{\theta(i)}^{-1}\bar 
f_{\theta(i)}^*$ to $\bar f_i^*$. Replacing $i$ by 
$\eta^{-1}(i)$, we see that $K$ lifts to homotopies of 
$\beta_{\theta \eta^{-1}(i)}^{-1} \bar 
f_{\theta \eta^{-1}(i)}^*$ 
to $\bar f_{\eta^{-1}(i)}^*.$ 
Then the concatenated homotopy $H * K^{-1}$, where 
$K^{-1}$ denotes the reverse of $K$, is a cyclic homotopy 
of $f$ lifting to homotopies of $\bar f^*_i$ to $\alpha_i 
\beta_{\theta 
\eta^{-1}(i)}^{-1} \bar f^*_{\theta \eta^{-1}(i)}.$ 
Thus 
\[ (\alpha_1\beta_{\theta\circ \eta^{-1}(1)}^{-1}, 
\dots, \alpha_n\beta_{\theta\circ \eta^{-1}(n)}^{-1}; 
\eta\circ\theta^{-1} ) \in J_n(f), \]
and this element is equal to 
$(\alpha_1,\dots,\alpha_n;\eta)(\beta_1,\dots,\beta_n;\theta)^{-1}$.
\end{proof}

It is natural to ask how the subgroup $J_n(\bar f^*)$ depends 
on the choice of lifting $\bar f^*$. An alternative choice 
will change the Jiang subgroup, but in a predictable way. 
Any other lifting has the form $\Gamma \bar f^*$ for some 
$\Gamma \in \pi_1(X)^n \rtimes \Sigma_n$, as follows.
\begin{theorem}
Let $f:X\to D_n(X)$ be an $n$-valued map, and
$\bar f^*: \tilde X \to F_n(\tilde X, \pi)$ its basic lifting. 
Then $J_n(\Gamma \bar f^*)$ is isomorphic to $J_n(\bar f^*)$ 
by an inner automorphism of $\pi_1(X)^n \rtimes \Sigma_n$. In 
particular $J_n(\Gamma \bar f^*) = J_n(\bar f^*)^\Gamma$, 
where the exponent denotes conjugation by $\Gamma$. 
\end{theorem}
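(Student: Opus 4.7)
My plan is to reduce the statement directly to the action of the covering group on cyclic homotopies, using the fact that composition with a deck transformation sends liftings to liftings. The whole argument then becomes a bookkeeping exercise with the semidirect product structure.

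First I would unpack the definition: $\Lambda \in J_n(\bar f^*)$ means there exists a cyclic homotopy $H \colon X \times I \to D_n(X)$ of $f$ and a lifting $\bar H \colon \tilde X \times I \to F_n(\tilde X, \pi)$ of $H$ with $\bar H(\cdot, 0) = \bar f^*$ and $\bar H(\cdot, 1) = \Lambda \bar f^*$. (When $X$ is the circle or interval, I restrict everything to the appropriate connected component of $F_n(\tilde X, \pi)$ exactly as in the proof of Theorem~\ref{Rhtp}.) Since $\Gamma \in \pi_1(X)^n \rtimes \Sigma_n$ acts as a covering transformation for $p^n$, we have $p^n \circ \Gamma = p^n$, hence $\Gamma \circ \bar H$ is also a lifting of $H$. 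Its endpoints are $\Gamma \bar f^*$ at $t=0$ and $\Gamma \Lambda \bar f^* = (\Gamma \Lambda \Gamma^{-1})\,(\Gamma \bar f^*)$ at $t=1$. By the definition of $J_n(\Gamma \bar f^*)$, this proves $\Gamma \Lambda \Gamma^{-1} \in J_n(\Gamma \bar f^*)$, so
\[
\Gamma\, J_n(\bar f^*)\, \Gamma^{-1} \subseteq J_n(\Gamma \bar f^*).
\]

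Next I would prove the reverse inclusion by the symmetric argument: given $\Lambda' \in J_n(\Gamma \bar f^*)$ with witnessing cyclic homotopy $H'$ and lifting $\bar H'$ starting at $\Gamma \bar f^*$ and ending at $\Lambda' \Gamma \bar f^*$, apply the deck transformation $\Gamma^{-1}$ to obtain a lifting $\Gamma^{-1} \circ \bar H'$ of $H'$ starting at $\bar f^*$ and ending at $\Gamma^{-1} \Lambda' \Gamma \bar f^*$. Hence $\Gamma^{-1} \Lambda' \Gamma \in J_n(\bar f^*)$, i.e., $\Lambda' \in \Gamma\, J_n(\bar f^*)\, \Gamma^{-1}$. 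Combined with the previous inclusion, this gives
\[
J_n(\Gamma \bar f^*) = \Gamma\, J_n(\bar f^*)\, \Gamma^{-1},
\]
which is exactly the statement $J_n(\Gamma \bar f^*) = J_n(\bar f^*)^\Gamma$ in the paper's notation. The isomorphism is then realized by the inner automorphism $\Lambda \mapsto \Gamma \Lambda \Gamma^{-1}$ of $\pi_1(X)^n \rtimes \Sigma_n$.

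I do not anticipate a serious obstacle here; the only point requiring care is verifying that $\Gamma \circ \bar H$ really lands in $F_n(\tilde X, \pi)$ and really covers $H$ (immediate because $\Gamma$ is a covering transformation of $p^n$), and that the algebraic identity $\Gamma \Lambda \bar f^* = (\Gamma \Lambda \Gamma^{-1})(\Gamma \bar f^*)$ is used consistently with the group action so that the element read off at the endpoint matches the prescribed form $(\gamma_1 \bar f^*_{\sigma^{-1}(1)},\dots,\gamma_n \bar f^*_{\sigma^{-1}(n)})$ in the definition of $J_n$. A brief check using the formulas for the group operation in $\pi_1(X)^n \rtimes \Sigma_n$ displayed just after Theorem~\ref{xicothm} confirms this.
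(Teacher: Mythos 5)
Your proof is correct and follows essentially the same route as the paper: lift the witnessing cyclic homotopy and translate it by the deck transformation $\Gamma$ to read off $\Gamma\Lambda\Gamma^{-1}$ at the endpoint. You are in fact slightly more complete than the paper, which explicitly proves only the inclusion $\Gamma J_n(\bar f^*)\Gamma^{-1}\subseteq J_n(\Gamma\bar f^*)$ and leaves the reverse inclusion (needed for the asserted equality) implicit, whereas you carry out the symmetric argument with $\Gamma^{-1}$.
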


\begin{proof}
We will show that for each $A = (\alpha_1,\dots,\alpha_n;
\eta) \in J_n(\bar f^*)$, we have $\Gamma A 
\Gamma^{-1} \in J_n(\Gamma \bar f^*)$.  
Since $A\in J_n(\bar f^*)$, there is a cyclic homotopy of 
$f$ which lifts to a homotopy of $\bar f^*$ to $A \bar f^*$. 
This same cyclic homotopy, when lifted to start at $\Gamma 
\bar f^*$, will give a homotopy of $\Gamma \bar f^*$ to 
$\Gamma A \bar f^* = (\Gamma A \Gamma^{-1})\Gamma \bar f^*$, 
and thus $\Gamma A\Gamma^{-1} \in J_n(\Gamma \bar f^*)$. 
\end{proof}


Recall from Section \ref{xicosection} that the homomorphism
$\psi_f: \pi_1(X) \to \pi_1(X)^n \rtimes \Sigma_n$
is defined by the requirement that
$
\forall \gamma \in \pi_1(X):
\psi_{f}(\gamma) \bar f^*
= \bar f^* \gamma.
$
>From the definition of $J_n(\bar f^*)$, we immediately obtain:
\begin{theorem}\label{Jsubset}
Let $f:X\to D_n(X)$, and let $\bar f^*:\tilde X \to F_n(\tilde X,\pi)$ be the basic lifting. Then there is a cyclic homotopy from $\bar f^*$ to $\bar f^*\gamma$ for every $\gamma\in \pi_1(X)$ if and only if $\psi_f(\pi_1(X)) \subseteq J_n(\bar f^*)$.
\end{theorem}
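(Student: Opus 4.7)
The plan is to observe that this statement is really an unwinding of two definitions, tied together by the single identity $\bar f^*\circ \gamma = \psi_f(\gamma)\cdot \bar f^*$ for every $\gamma\in\pi_1(X)$, which is exactly the defining property of $\psi_f$ from Section~\ref{xicosection}. So the proof reduces to translating the phrase ``cyclic homotopy from $\bar f^*$ to $\bar f^*\gamma$'' into the language used in the definition of $J_n(\bar f^*)$.

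First I would recall the definition precisely: an element $\Gamma\in \pi_1(X)^n\rtimes \Sigma_n$ lies in $J_n(\bar f^*)$ if and only if there exists a cyclic homotopy $H\colon X\times I\to D_n(X)$ of $f$ whose lift starting at $\bar f^*$ ends at $\Gamma\cdot \bar f^*$. Next I would note that for any $\gamma\in \pi_1(X)$, the map $\bar f^*\circ \gamma\colon \tilde X\to F_n(\tilde X,\pi)$ is itself a lifting of $f$, since $p^n\circ \bar f^*\circ \gamma = f\circ p\circ \gamma = f\circ p$; and by the defining relation $\psi_f(\gamma)\circ \bar f^* = \bar f^*\circ \gamma$, we may identify $\bar f^*\circ \gamma$ with $\psi_f(\gamma)\cdot \bar f^*$.

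With this bridge in place, both directions are immediate. For the forward implication, suppose that for every $\gamma\in\pi_1(X)$ there is a cyclic homotopy of $f$ lifting to a homotopy from $\bar f^*$ to $\bar f^*\circ \gamma$. By the identification above, this homotopy ends at $\psi_f(\gamma)\cdot \bar f^*$, so $\psi_f(\gamma)\in J_n(\bar f^*)$; since $\gamma$ was arbitrary, $\psi_f(\pi_1(X))\subseteq J_n(\bar f^*)$. For the converse, assume $\psi_f(\pi_1(X))\subseteq J_n(\bar f^*)$ and fix $\gamma\in \pi_1(X)$; then $\psi_f(\gamma)\in J_n(\bar f^*)$ provides a cyclic homotopy of $f$ lifting to a homotopy from $\bar f^*$ to $\psi_f(\gamma)\cdot \bar f^* = \bar f^*\circ \gamma$, as required.

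There is no substantive obstacle here: the only care needed is to verify that $\bar f^*\circ \gamma$ really is a lift of $f$ along $p^n$, which follows from $p\circ \gamma = p$, and to invoke the defining property of $\psi_f$ to rewrite it as $\psi_f(\gamma)\cdot \bar f^*$. Everything else is a direct quantifier manipulation over the defining statement of $J_n(\bar f^*)$.
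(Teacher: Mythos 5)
Your proposal is correct and matches the paper's treatment: the paper offers no written argument, stating that the theorem follows immediately from the definition of $J_n(\bar f^*)$, and your unwinding via the defining identity $\bar f^*\circ\gamma=\psi_f(\gamma)\cdot\bar f^*$ (together with the unique expression of liftings as $\Gamma\bar f^*$) is exactly the intended reasoning.
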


The condition that $\psi_f(\pi_1(X)) \subseteq J_n(\bar f^*)$ is the $n$-valued analogue of the condition in single-valued Nielsen theory that $f_\#(\pi_1(X)) \subseteq J(\tilde f)$. In the single-valued theory, this condition implies that all fixed point classes have the same index, and this can be used to show in many cases that $N(f)=R(f)$. In the $n$-valued theory the result is that some, though perhaps not all, of the fixed point classes have the same index.

\begin{lemma}\label{cyclicindex} 
Let $f:X\to D_n(X)$ be a map and $\bar f^* = (\bar f^*_1,\dots,\bar f^*_n)$ be the basic lifting.
If $\psi_f(\pi_1(X)) \subseteq J_n(\bar f^*)$,
then $p\Fix(\gamma \bar f_j^*)$ and  $p\Fix(\delta \bar f_j^*)$ 
have the same fixed point index for each $j$ and any 
$\gamma, \delta \in \pi_1(X)$. \end{lemma}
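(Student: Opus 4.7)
The plan is to combine the Jiang hypothesis with a diagonal translation of the orbit configuration space, and then to use Theorem~\ref{equivwithpsi} to identify the result of the translation with the fixed point class of $\delta \bar f^*_j$.

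For fixed $\gamma, \delta \in \pi_1(X)$, set $\mu = \delta \gamma^{-1}$. By the hypothesis $\psi_f(\pi_1(X)) \subseteq J_n(\bar f^*)$, there is a cyclic homotopy $H$ of $f$ whose lift to $F_n(\tilde X,\pi)$ starting at $\bar f^*$ ends at $\psi_f(\mu)\bar f^* = \bar f^* \circ \mu$. Writing $l = \sigma_\mu^{-1}(j)$, the $j$-th coordinate of this lifted homotopy is a homotopy of self-maps of $\tilde X$ from $\bar f^*_j$ to $\phi_j(\mu)\bar f^*_l$. Now I would translate the whole lifted homotopy by the diagonal element $(\gamma,\dots,\gamma;\mathrm{id}) \in \pi_1(X)^n \rtimes \Sigma_n$. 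Componentwise this is left-multiplication by $\gamma$, and since $p\circ \gamma = p$, the translated map is still a lift of the same cyclic homotopy $H$. Its $j$-th component is a homotopy of self-maps of $\tilde X$ from $\gamma \bar f^*_j$ to $\gamma\phi_j(\mu)\bar f^*_l$. Because this is the $j$-coordinate of a lift of a cyclic homotopy of $f$, invoking the $n$-valued analogue of the classical Jiang homotopy invariance of the fixed point class index gives
\[
\ind(f, p\Fix(\gamma\bar f^*_j)) = \ind(f, p\Fix(\gamma\phi_j(\mu)\bar f^*_l)).
\]

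To finish, I would show that $\gamma\phi_j(\mu)\bar f^*_l \sim_{lf} \delta\bar f^*_j$. By Theorem~\ref{equivwithpsi} this reduces to exhibiting $\gamma' \in \pi_1(X)$ with $\sigma_{\gamma'}(j) = l$ and $\gamma\phi_j(\mu) = \gamma'\delta\phi_j(\gamma'^{-1})$. The choice $\gamma' = \mu^{-1} = \gamma\delta^{-1}$ works: one has $\sigma_{\mu^{-1}}(j) = \sigma_\mu^{-1}(j) = l$ and $\mu^{-1}\delta\phi_j(\mu) = (\gamma\delta^{-1})\delta\phi_j(\mu) = \gamma\phi_j(\mu)$. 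Theorem~\ref{lcfpc}(a) then identifies the two fixed point classes $p\Fix(\gamma\phi_j(\mu)\bar f^*_l) = p\Fix(\delta\bar f^*_j)$, so they certainly share the same index, and the lemma follows. The main obstacle is the invariance statement in the middle step: one needs to know that the index of an $n$-valued fixed point class is unchanged by the translated lift of the cyclic homotopy. This is the direct $n$-valued analogue of the classical Jiang statement, and it reduces to the single-valued case locally because $p$ is a local homeomorphism, so the local index of $f$ at a fixed point $x$ equals the local index of the relevant lift-factor at any preimage $\tilde x$; the cyclic homotopy then supplies the compactly controlled tracking between the two fixed point sets. The rest of the argument is purely formal manipulation inside the semidirect product.
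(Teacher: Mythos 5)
Your proposal is correct and follows essentially the same route as the paper: both rest on the cyclic homotopy supplied by the Jiang hypothesis (via Theorem~\ref{Jsubset}), Schirmer's homotopy invariance of the fixed point index applied coordinatewise to the lifted homotopy, and the identification of fixed point classes of $\sim_{lf}$-equivalent lift-factors from Theorem~\ref{lcfpc}. The paper merely streamlines the bookkeeping by reducing to the comparison of $p\Fix(\bar f^*_j)$ with $p\Fix(\gamma\bar f^*_j)$ and observing $\gamma\bar f^*_j \sim_{lf} \bar f^*_j\gamma$, whereas you compare $\gamma\bar f^*_j$ and $\delta\bar f^*_j$ directly via the diagonal translation; the algebra in your final $\sim_{lf}$ verification checks out.
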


\begin{proof} It suffices to show that 
$p\Fix( \bar f_i^*)$ and $p\Fix(\gamma\bar f_i^*)$ have 
the same index for any $i$ and any $\gamma \in \pi_1(X)$.  
First observe that
$\gamma \bar f^*_i \sim_{lf} \bar f^*_i \gamma$
since $\gamma \bar f^*_i = \gamma \bar f^*_i\gamma \gamma^{-1}.$
Thus $p\Fix(\gamma\bar f^*_i) = p\Fix(\bar f^*_i \gamma)$
by Theorem \ref{lcfpc}.
By Theorem \ref{Jsubset}, since $\psi_f(\pi_1(X)) \subseteq J_n(\bar f^*)$ we have a cyclic homotopy of $\bar f^*_i$ to $\bar f^*_i\gamma$. Thus, by the homotopy 
invariance of the fixed point index (Lemma 6.4 of 
\cite{s}), we have
\[
\ind(f, p\Fix(\bar f_i^*)) = \ind(f, p\Fix(\bar f^*_i \gamma)) = \ind(f, p\Fix(\gamma \bar f_i^*)). \qedhere
\]

%
\end{proof}

Recall from Section \ref{compreidsection} the definition
of $\sigma-$classes:
these are the equivalence classes of the relation defined by
$$
i \sim j \iff \exists \gamma \in \pi_1(X): \sigma_\gamma(i)=j.
$$

\begin{proposition} 
Let $f:X\to D_n(X)$ be a map and $\bar f^* = (\bar f^*_1,\dots,\bar f^*_n)$ be the basic lifting.
If $\psi_f(\pi_1(X)) \subseteq J_n(\bar f^*)$, and $i$ and $j$ are in the same $\sigma-$class, then 
$p\Fix(\gamma \bar f^*_i)$ and $p\Fix(\delta \bar f^*_j)$ 
have the same index for any $\gamma, \delta \in \pi_1(X)$.  
In particular, if there is only one $\sigma-$class, then all the fixed point 
classes of $f$ have the same index and therefore either 
$N(f) = 0$ or $N(f) = R(f)$.  \end{proposition}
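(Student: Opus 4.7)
My plan is to reduce the statement to Lemma \ref{cyclicindex} by exploiting the lift-factor equivalences that the $\sigma$-relation provides. Lemma \ref{cyclicindex} already gives, for each fixed $k \in \{1,\dots,n\}$, that all fixed point classes of the form $p\Fix(\mu \bar f_k^*)$ with $\mu \in \pi_1(X)$ share one common fixed point index; call it $\iota_k$. So the first claim reduces to showing that $\iota_i = \iota_j$ whenever $i$ and $j$ lie in the same $\sigma$-class.

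To do this, I would pick $\gamma \in \pi_1(X)$ with $\sigma_\gamma(j) = i$ and apply Lemma \ref{onerepr} with $\alpha = 1$ to produce a $\beta \in \pi_1(X)$ with $\bar f_i^* \sim_{lf} \beta \bar f_j^*$. (Unwinding the argument one sees $\beta = \gamma^{-1}\phi_i(\gamma)$ works, but only existence matters.) Theorem \ref{lcfpc}(a) then yields
$$ p\Fix(\bar f_i^*) \;=\; p\Fix(\beta \bar f_j^*), $$
so the two fixed point classes coincide as subsets of $\Fix(f)$ and in particular carry the same fixed point index. Combined with Lemma \ref{cyclicindex}, this forces $\iota_i = \iota_j$ and hence $\ind(f, p\Fix(\gamma \bar f_i^*)) = \ind(f, p\Fix(\delta \bar f_j^*))$ for every $\gamma, \delta \in \pi_1(X)$.

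For the ``in particular'' clause, assume there is a single $\sigma$-class. Then the first part says every fixed point class of $f$ carries the same index $\iota$. Recall from the end of Section \ref{xicosection} that $f$ has exactly $R(f)$ fixed point classes and that $N(f)$ counts those of nonzero index; so either $\iota = 0$, in which case every class is inessential and $N(f) = 0$, or $\iota \neq 0$, in which case every class is essential and $N(f) = R(f)$.

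I do not foresee any genuine obstacle. The only subtlety to keep in mind is that $\phi_j$ is not in general a homomorphism (as highlighted after Lemma \ref{onerepr} and in the circle example), so the passage from the $\sigma$-relation to a lift-factor equivalence must be routed through Lemma \ref{onerepr} or Theorem \ref{equivwithpsi} rather than through any naive twisted-conjugacy manipulation.
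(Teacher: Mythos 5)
Your proposal is correct and follows essentially the same route as the paper: both arguments combine Lemma \ref{onerepr} (to convert the $\sigma$-relation into a lift-factor equivalence), Theorem \ref{lcfpc}(a) (to identify the corresponding fixed point classes), and Lemma \ref{cyclicindex} (to equalize indices within a fixed coordinate), differing only in that you apply Lemma \ref{onerepr} with $\alpha = 1$ while the paper applies it directly to $\gamma\bar f^*_i$. The handling of the ``in particular'' clause is likewise the intended one.
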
 

\begin{proof} 
If $i$ and $j$ are in the same $\sigma-$class, then by Lemma \ref{onerepr}
there exists $\beta \in \pi_1(X)$ such that $\gamma\bar f^*_i \sim_{lf} \beta \bar f^*_j.$
By Theorem \ref{lcfpc}, it follows that
$p\Fix(\gamma\bar f^*_i) = p\Fix(\beta \bar f^*_j),$
and then by Lemma~\ref{cyclicindex} we have:
\begin{align*} 
\ind(f, p\Fix(\gamma \bar f^*_i)) 
&= \ind(f, p\Fix(\beta\bar f^*_j))\\ 
&= \ind(f, p\Fix(\delta \bar f^*_j)) \qedhere
\end{align*} 
\end{proof} 

For $t = (t_1, \dots , t_q) \in \mathbb R^q$, denote the 
universal covering space of the torus $T^q$ by $p^q \colon 
\mathbb R^q \to T^q$ where $p^q(t) = (p(t_1), \dots 
, p(t_q))$ for $p(t_j) = 
\exp(i2\pi t_j)$.  
The condition of Theorem \ref{Jsubset} which implies equality of indices is always satisfied for maps on tori:

\begin{theorem}\label{torusjiang}
Let $T^q$ denote the $q$-torus, and let
$f:T^q \to D_n(T^q)$ be a map. Then $\psi_f(\pi_1(T^q)) \subseteq J_n(\bar f^*)$.
\end{theorem}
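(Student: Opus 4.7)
The plan is to invoke Theorem~\ref{Jsubset}, which reduces the claim to producing, for every $\gamma \in \pi_1(T^q)$, a cyclic homotopy of $f$ whose lift starting at $\bar f^*$ terminates at $\bar f^* \circ \gamma$. The key idea is to exploit the abelian group structure on $T^q$: rather than trying to translate the output of $f$, I will build the cyclic homotopy by sliding the \emph{input} of $f$ around a loop in $T^q$ representing $\gamma$.

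Identifying $\pi_1(T^q)$ with $\mathbb{Z}^q$ acting on $\mathbb R^q$ by translation, and writing the torus multiplication as $\cdot$, for each $\gamma \in \mathbb{Z}^q$ define
\[ H \colon T^q \times I \to D_n(T^q), \qquad H(x, t) = f\bigl(x \cdot p^q(t\gamma)\bigr). \]
Since $p^q(0) = p^q(\gamma) = 1 \in T^q$, one gets $H(x, 0) = H(x, 1) = f(x)$, so $H$ is a cyclic homotopy of $f$. Now define a candidate lift by
\[ \bar H \colon \mathbb R^q \times I \to F_n(\mathbb R^q, \pi), \qquad \bar H(\tilde x, t) = \bar f^*(\tilde x + t\gamma). \]
This takes values in $F_n(\mathbb R^q, \pi)$ because $\bar f^*$ already does, and a direct check yields $p^n \bar H(\tilde x, t) = f\bigl(p^q(\tilde x) \cdot p^q(t\gamma)\bigr) = H(p^q(\tilde x), t)$, so $\bar H$ is a lift of $H$. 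Clearly $\bar H(\tilde x, 0) = \bar f^*(\tilde x)$, and
\[ \bar H(\tilde x, 1) = \bar f^*(\tilde x + \gamma) = \bar f^*(\gamma \cdot \tilde x) = \psi_f(\gamma)\bigl(\bar f^*(\tilde x)\bigr), \]
so the lift ends at $\psi_f(\gamma) \bar f^* = \bar f^* \circ \gamma$, which is exactly the hypothesis of Theorem~\ref{Jsubset}. This proves $\psi_f(\gamma) \in J_n(\bar f^*)$ for every $\gamma \in \pi_1(T^q)$.

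There is no real obstacle once the construction is chosen. The one thing to keep straight is the interaction of two different group actions: the torus multiplication on $T^q$, which is used to build the homotopy on $D_n(T^q)$, and the covering action of $\pi_1(T^q) = \mathbb Z^q$ on $\mathbb R^q$, which identifies $\bar f^*(\tilde x + \gamma)$ with $\psi_f(\gamma) \bar f^*(\tilde x)$. The argument works uniformly in $n$, since $f$ enters only through the natural formula $f(x \cdot p^q(t\gamma))$, and in the single-valued case $n=1$ it recovers the classical fact that $J(\tilde f) = \pi_1(T^q)$ for any self-map of the torus.
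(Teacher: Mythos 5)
Your proposal is correct and is essentially the paper's own argument: both constructions slide the input along the straight-line path $t \mapsto \tilde x + t\gamma$, observe that $p^q(\gamma)=0$ makes the projected homotopy cyclic, and conclude that the lift ends at $\bar f^*\circ\gamma = \psi_f(\gamma)\bar f^*$. The only cosmetic difference is that you define $H$ downstairs via the group structure on $T^q$ and then exhibit $\bar H$ as its lift, whereas the paper defines $\bar H$ upstairs first and projects it down.
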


\begin{proof}
Let $a \in \mathbb Z^q \cong \pi_1(T^q)$.
We will show that there is a cyclic homotopy of $f$
which lifts to a homotopy of $\bar f^*(t)$ to $\bar f^*(t+a)$.
For $t \in \mathbb R^q$ and $s\in [0,1]$, define $\bar H(t,s) 
= \bar f^*(t + sa)$. Then $\bar H$ is a
homotopy of $\bar f^*(t)$ to $\bar f^*(t+a)$, and each
stage of the homotopy is $n$-valued because $\bar f^*(t)$
is $n$-valued.
Let $H(p^q(t),s) = (p^q)^n (\bar H(t,s))$. Then we can
compute:
\begin{align*}
H(p^q(t),0) &= (p^q)^n (\bar H(t,0)) = (p^q)^n 
\bar f^*(t) = f(p^q(t)) \\
H(p^q(t),1) &= (p^q)^n (\bar H(t,1)) = (p^q)^n \bar 
f^*(t+a)\\
&= f(p^q(t+a)) = f(p^q(t))
\end{align*}
where the last equality holds because $a\in \mathbb Z^q$. Thus
$H$ is a cyclic homotopy of $f$, which lifts to $\bar H$,
a homotopy of $\bar f^*(t)$ to $\bar f^*(t+a)$.
\end{proof}


To illustrate the previous results, we apply them to the 
following class of $n$-valued maps introduced in \cite{bl}.  

We recall that a $q \times q$
integer matrix $A$ induces a map $f_A \colon  
\mathbb R^q/\mathbb Z^q = T^q \to T^q$ by
$$
f_A(p^q(t)) = p^q(At) = (p(A_1 \cdot t), \dots , p(A_q \cdot t)),
$$
where $A_j$ is the $j$-th row of $A$, and that
$f_A$ is called a {\it linear} self-map of $T^q$.

We define $x = (x_1, \dots , x_q), y = (y_1, \dots 
, y_q) \in \mathbb R^q$ to be {\it congruent mod $n$},
written $x \equiv y \, (n)$, if $x_j - y_j$ is
divisible by $n$ for all $j = 1, \dots , q$.

For $k \in \mathbb Z$, let ${\bf k} = (k, k, 
\dots , k) \in \mathbb Z^q$.  Define $f^{(k)}_{n, A} 
\colon \mathbb R^q \to T^q$ by
$$
f^{(k)}_{n, A}(t) = p^q(\frac 1n(At + {\bf k})).
$$

\begin{theorem} (\cite{bl}, Theorem 3.1) A $q \times q$
integer matrix $A$ induces an $n$-valued map
$f_{n, A} \colon T^q \to D_n(T^q)$ defined by
$$
f_{n, A}(p^q(t)) = \{ f^{(1)}_{n, A}(t), \dots , 
f^{(n)}_{n, A}(t)\}
$$
if and only if $A_i \equiv A_j \, (n)$ for all
$i, j \in \{1, \dots , q\}$.
\end{theorem}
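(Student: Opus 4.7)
The plan is to verify the three defining conditions for a set-valued function to be an $n$-valued map: well-definedness on $T^q$, cardinality exactly $n$ on every value, and continuity into $D_n(T^q)$. The central computation in both directions is the identity
\[ \frac{1}{n}\bigl(A(t+m) + \mathbf{k}\bigr) - \frac{1}{n}(At + \mathbf{k}') = \frac{1}{n}\bigl(Am + \mathbf{k} - \mathbf{k}'\bigr), \qquad t\in \R^q,\; m\in \Z^q,\; k,k'\in\{1,\dots,n\}, \]
which records the discrepancy between the lift-values at $t+m$ and at $t$. Since $p^q$ has kernel $\Z^q$, the set equality $\{f^{(k)}_{n,A}(t+m)\}_k = \{f^{(k')}_{n,A}(t)\}_{k'}$ amounts to the existence, for each $k$, of a $k'$ with $\tfrac{1}{n}(Am + \mathbf{k} - \mathbf{k}') \in \Z^q$, i.e.\ with $Am \equiv \mathbf{k}' - \mathbf{k} \pmod{n\Z^q}$.

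For the ``if'' direction I would first observe that the hypothesis $A_i\equiv A_j\,(n)$ says exactly that for every $m\in\Z^q$ the vector $Am$ is a constant vector modulo $n$; write $Am = c(m)\mathbf{1} + n\mathbf{w}$ with $c(m)\in\Z$ and $\mathbf{w}\in\Z^q$. Then $f^{(k)}_{n,A}(t+m) = f^{(k')}_{n,A}(t)$ where $k'\in\{1,\dots,n\}$ is the representative of $k + c(m) \bmod n$, and $k\mapsto k'$ is a permutation of $\{1,\dots,n\}$, which gives well-definedness on $T^q$. Cardinality follows because for distinct $k,k'\in\{1,\dots,n\}$ the difference is $\tfrac{k-k'}{n}\mathbf{1}$, and $|k-k'|<n$ forces $\tfrac{k-k'}{n}\notin\Z$, so the corresponding points in $T^q$ differ. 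For continuity, the map $(f^{(1)}_{n,A},\dots,f^{(n)}_{n,A})\colon\R^q\to (T^q)^n$ is manifestly continuous and, by the cardinality observation, lands in $F_n(T^q)$; post-composing with the quotient $q\colon F_n(T^q)\to D_n(T^q)$ and using the already-established well-definedness to descend through $p^q$ yields the continuous map $f_{n,A}\colon T^q\to D_n(T^q)$.

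For the ``only if'' direction I would run the same computation in reverse: if $f_{n,A}$ is a well-defined $n$-valued map on $T^q$, then for every $m\in\Z^q$ and $k\in\{1,\dots,n\}$ there exists $k'\in\{1,\dots,n\}$ with $Am \equiv \mathbf{k}' - \mathbf{k} \pmod{n\Z^q}$. Since $\mathbf{k}'-\mathbf{k}$ is a constant vector, $Am$ is constant modulo $n$ for every $m\in\Z^q$. Specializing to $m = e_j$ shows each column of $A$ has entries all congruent mod $n$, which is equivalent to $A_i\equiv A_j\,(n)$ for all $i,j$.

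The routine pieces are the computation above and the standard continuity descent through the covering $p^q$. The one subtle point that I expect to need care is the bookkeeping between the two indexings in the ``if'' direction: one must check that the assignment $k\mapsto k'$ is actually a permutation of $\{1,\dots,n\}$ (not merely a function), so that the set $\{f^{(k)}_{n,A}(t+m)\}_k$ coincides with $\{f^{(k)}_{n,A}(t)\}_k$ rather than being a proper multiset subset. This is the main obstacle, and it is handled by noting that translation by the constant $c(m)$ mod $n$ is a bijection of $\Z/n\Z$; once this is in place, the remaining verifications are automatic.
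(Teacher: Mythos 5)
Your proof is correct; note that the paper itself gives no proof of this statement, merely citing it as Theorem 3.1 of Brown--Lin \cite{bl}. Your argument --- reducing everything to the congruence $Am \equiv \mathbf{k}'-\mathbf{k} \pmod{n}$, observing that this forces $Am$ to be constant mod $n$ (equivalently, the rows of $A$ congruent mod $n$), and checking the permutation, cardinality, and descent-through-$p^q$ points --- is the standard and essentially the original argument for this result.
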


Since $f_{1, A} = f_A : T^q \to T^q$,
the maps $f_{n, A}$ are
called {\it linear} $n$-valued self-maps of tori.

The $n$-valued map $f_{n, A}$ lifts to $\bar 
f_{n, A}^\ast = (\bar f_1^\ast, \dots , \bar f_n^\ast) \colon 
\mathbb R^q \to F_n(\mathbb R^q, \mathbb Z^q)$ where
$$
\bar f_j(t)^\ast = \frac 1n(At + {\bf j}).
$$

\begin{proposition} Let $A$ be a $q \times q$
integer matrix such that $A_i \equiv
A_j \, (n)$ for all $i, j \in \{1, \dots , q\}$
and let $f_{n, A} \colon T^q \to D_n(T^q)$ be
the corresponding linear $n$-valued self-map of
$T^q$, then all the fixed point classes of
$f_{n, A}$ have the same index and therefore
$N(f_{n, A}) = 0$ or $N(f_{n, A}) = R(f_{n, A}).$
\end{proposition}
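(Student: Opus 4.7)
The plan is to show that every fixed point class of $f_{n,A}$ carries the same fixed point index; once this is done, the dichotomy $N(f_{n,A})\in\{0,R(f_{n,A})\}$ is immediate, since each essential class contributes $1$ to the Nielsen number and inessential classes contribute nothing. Every lift-factor of $f_{n,A}$ has the form $\gamma+\bar f^\ast_j$ with $\gamma\in\mathbb{Z}^q$ and $j\in\{1,\dots,n\}$, where $\bar f^\ast_j(t)=\tfrac{1}{n}(At+\mathbf{j})$ and $\mathbf{j}=(j,\dots,j)$.

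By Theorem~\ref{torusjiang} we already know $\psi_{f_{n,A}}(\pi_1(T^q))\subseteq J_n(\bar f^\ast)$, so Lemma~\ref{cyclicindex} at once tells us that for each fixed $j$ the classes $p\Fix(\gamma+\bar f^\ast_j)$, $\gamma\in\mathbb{Z}^q$, all carry a common index $I_j$. The remaining task is to compare the $I_j$ for different $j$, and for this I will exhibit a single cyclic homotopy of $f_{n,A}$ whose effect is to cyclically shift the indexing $j$. With $\mathbf{1}=(1,\dots,1)$, define
\[
H(p^q(t),s) \;=\; \bigl\{\, p^q\bigl(\bar f^\ast_j(t)+\tfrac{s}{n}\mathbf{1}\bigr) : j=1,\dots,n \,\bigr\}.
\]
Three routine verifications show that $H$ is a cyclic homotopy of $f_{n,A}$: the hypothesis $A_i\equiv A_j\pmod n$ is exactly what one needs for $H$ to descend from $\mathbb{R}^q$ to $T^q$, because replacing $t$ by $t+a$ adds the common offset $Aa/n\equiv\tfrac{m}{n}\mathbf{1}\pmod{\mathbb{Z}^q}$ to every slice, merely permuting its $n$ points; the $n$ points remain distinct for every $s$ since $(j-k)/n\notin\mathbb{Z}$ for distinct $j,k\in\{1,\dots,n\}$; and at $s=1$ the identities $\bar f^\ast_j+\tfrac{1}{n}\mathbf{1}=\bar f^\ast_{j+1}$ (for $j<n$) and $\bar f^\ast_n+\tfrac{1}{n}\mathbf{1}=\mathbf{1}+\bar f^\ast_1$ give $H(\cdot,1)=f_{n,A}$.

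Lifting $H$ to start at $\bar f^\ast$, its $j$-th coordinate is a single-valued homotopy from $\bar f^\ast_j$ to $\bar f^\ast_{j+1}$ (for $j<n$), or from $\bar f^\ast_n$ to $\mathbf{1}+\bar f^\ast_1$, sitting inside the cyclic $n$-valued homotopy $H$ of $f_{n,A}$. By the homotopy invariance of Schirmer's fixed point index (Lemma~6.4 of~\cite{s}, the very tool invoked inside Lemma~\ref{cyclicindex}) this yields $I_j=I_{j+1}$ for $j<n$ and $I_n=\ind(f_{n,A},p\Fix(\mathbf{1}+\bar f^\ast_1))$; the latter equals $I_1$ by Lemma~\ref{cyclicindex} applied once more, since $\mathbf{1}+\bar f^\ast_1$ is another lift-factor with $j=1$. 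Chaining these equalities closes the cycle, so $I_1=\cdots=I_n$ and every fixed point class of $f_{n,A}$ shares the same index, whence the proposition. The one delicate point of the argument will be the descent of $H$ to $T^q$: one must check that translation by $Aa/n$ permutes each $n$-element slice modulo $\mathbb{Z}^q$, and this is exactly where the hypothesis that the rows of $A$ are congruent modulo $n$ is used.
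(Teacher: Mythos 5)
Your proof is correct and follows essentially the same route as the paper: the cyclic homotopy with lift $\bar h_j(t,s)=\tfrac 1n(At+\mathbf j+s\mathbf 1)$ carrying $\bar f^\ast_j$ to $\bar f^\ast_{j+1}$ is exactly the one used there, combined with Theorem \ref{torusjiang} and Lemma \ref{cyclicindex} to equate the indices of the translates $\gamma+\bar f^\ast_j$. Your additional verifications (descent to $T^q$, distinctness of the $n$ points, and the wrap-around case $j=n$) only make explicit what the paper leaves implicit.
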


\begin{proof} 
The map $f_{n, A}$ lifts to $\bar 
f^*_{n, A} = (\bar f^*_1, \dots , \bar f^*_n) \colon
\mathbb R^q \to F_n(\mathbb R^q, \mathbb Z^q)$ where
$$
\bar f^*_j(t) = \frac 1n(At + \mathbf j).
$$
Define $\bar H = (\bar h_1, \dots , \bar h_n) 
\colon \mathbb R^q \times I \to F_n(\mathbb R^q, 
\mathbb Z^q)$ by setting
$$
\bar h_j(t, s) = \frac 1n(At + \mathbf j + \mathbf s).
$$
Then for each $j<n$, the coordinate $j$ of $\bar H$ gives 
a homotopy of $\bar f^*_j$ to $\bar f^*_{j+1}$. By 
projecting this homotopy, we have $\ind(f_{n, A},p\Fix(\bar 
f^*_j)) = \ind(f_{n, A},p\Fix(\bar f^*_{j+1}))$ for each $j<n$. 

Now let $p\Fix(\gamma\bar f^*_j)$ and $p\Fix(\delta 
\bar f^*_k)$ be any two fixed point classes. By 
concatenating the homotopies above, $p\Fix(\bar f^*_j)$ 
and $p\Fix(\bar f^*_k)$ have the same index. By Theorem 
\ref{torusjiang} we have $\psi_{f_{n,A}}(\pi_1(T^q)) \subseteq J_n(\bar f_{n, A}^*)$, 
and so we may apply Lemma \ref{cyclicindex}.  We obtain:
\begin{eqnarray*}
\ind(f_{n, A},p\Fix(\gamma \bar f^*_j)) &=& 
\ind(f_{n, A},p\Fix(\bar f^*_j))\\
&=& \ind(f_{n, A},p\Fix(\bar f^*_k)) = 
\ind(f_{n, A},p\Fix(\delta \bar f^*_k)). 
\end{eqnarray*}
Therefore all the fixed point
classes have the same index and we conclude that
$N(f_{n, A}) = 0$ or $N(f_{n, A}) = R(f_{n, A}).$
\end{proof}

The Nielsen number of $f_{n, A}$ was calculated
in \cite{c} to be $N(f_{n, A}) = n|\det(E - \frac 1n A)|$
where $E$ is the identity matrix, and thus, if  
$N(f_{n, A})$ is non-zero, we can
conclude that  $R(f_{n, A}) = n|\det(E - \frac 
1n A)|$ also.

\section{Split maps}\label{splitsection}

An $n$-valued map $f \colon X \to D_n(X)$ is \emph{split} if 
there exist single-valued maps $f_1, \dots , f_n \colon X \to
X$ such that $f(x) = \{f_1(x), \dots , f_n(x)\}$ for all $x 
\in X$.  We write $f = \{f_1, \dots , f_n\}$.  Schirmer proved
(\cite{s}, Corollary 7.2) that if $f = \{f_1, \dots , f_n\}$,
is split, then the Nielsen number of the $n$-valued map is related to
the Nielsen number for single-valued maps by 
$$
N(f) = N(f_1) + 
\cdots + N(f_n).
$$

Recall from Section \ref{xicosection} that the homomorphism
$\psi_f: \pi_1(X) \to \pi_1(X)^n \rtimes \Sigma_n$
is defined by the requirement that
$
\forall \gamma \in \pi_1(X):
\psi_{f}(\gamma) \bar f^*
= \bar f^* \gamma,
$
and that we write 
$ \psi_{f}(\gamma) =
(\phi_1(\gamma), \dots, \phi_n(\gamma);
\sigma_\gamma). $
\begin{theorem}
Let $f=(f_1,\dots,f_n)$ be a split $n$-valued map with
basic lifting $\bar f^* = (\bar f^*_1,\dots,\bar f^*_n)$
where $\bar f^*_i$ is a lifting of $f_i.$
Then $\sigma_\alpha$ is the identity permutation for every 
$\alpha$, and each $\phi_i$ is the induced homomorphism 
$f_{i\pi} \colon \pi_1(X) \to \pi_1(X)$ of $f_i$.
\end{theorem}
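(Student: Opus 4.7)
The plan is to derive the conclusion directly from the defining equation of $\psi_f$ by expanding it componentwise and then invoking the uniqueness of representation of lift-factors established earlier.

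First I would start with the defining identity $\psi_f(\gamma) \circ \bar f^* = \bar f^* \circ \gamma$, write $\psi_f(\gamma) = (\phi_1(\gamma), \dots, \phi_n(\gamma); \sigma_\gamma)$, and expand it componentwise using the $\pi_1(X)^n \rtimes \Sigma_n$--action formula given in Theorem~\ref{xicothm}. The $i$-th coordinate reads
\[
\phi_i(\gamma)\, \bar f^*_{\sigma_\gamma^{-1}(i)} = \bar f^*_i \circ \gamma.
\]

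Next I would use the assumption that $f$ is split. Since $\bar f^*_i$ is a lift of the single-valued map $f_i \colon X \to X$, the classical theory of lifts to the universal cover gives an induced homomorphism $f_{i\pi} \colon \pi_1(X) \to \pi_1(X)$ characterized by
\[
\bar f^*_i \circ \gamma = f_{i\pi}(\gamma) \, \bar f^*_i
\]
for every $\gamma \in \pi_1(X)$. Substituting this into the componentwise identity yields
\[
\phi_i(\gamma)\, \bar f^*_{\sigma_\gamma^{-1}(i)} = f_{i\pi}(\gamma) \, \bar f^*_i.
\]

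The final step is to apply the uniqueness statement following Lemma~\ref{uniqbasiclc}, namely that $\alpha \bar f^*_k = \beta \bar f^*_\ell$ forces $\alpha = \beta$ and $k = \ell$. This immediately gives $\sigma_\gamma^{-1}(i) = i$ and $\phi_i(\gamma) = f_{i\pi}(\gamma)$. Since this holds for all $i \in \{1,\dots,n\}$ and all $\gamma \in \pi_1(X)$, we conclude that $\sigma_\gamma$ is the identity permutation and $\phi_i = f_{i\pi}$ as claimed.

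There is no real obstacle here; the proof is essentially a bookkeeping exercise. The only mild subtlety is to notice that the split hypothesis is exactly what allows us to replace $\bar f^* \circ \gamma$ with a diagonal expression involving the individual induced homomorphisms $f_{i\pi}$, and then the uniqueness of the $(\alpha,i)$ representation of lift-factors forces both the permutation part and the group part to match coordinate by coordinate.
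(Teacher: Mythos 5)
Your proposal is correct and follows essentially the same route as the paper: expand $\psi_f(\gamma)\circ\bar f^* = \bar f^*\circ\gamma$ componentwise via the semidirect-product action, use single-valued covering space theory to write $\bar f^*_i\circ\gamma = f_{i\pi}(\gamma)\,\bar f^*_i$, and invoke the uniqueness of the representation $\alpha\bar f^*_k$ to match both the permutation and group parts. No gaps.
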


\begin{proof}
Since 
$
\bar f^* \alpha
= \psi_f(\alpha) \bar f^*
= (\phi_1(\alpha) \bar f^*_{\sigma_\alpha^{-1}(1)},
\dots, 
\phi_n(\alpha) \bar f^*_{\sigma_\alpha^{-1}(n)}),
$
we have $\bar f^*_i \alpha = \phi_i(\alpha) \bar f^*_{\sigma_\alpha^{-1}(i)}$
for all $i.$
By single-valued 
covering space theory  
$\bar f^*_i \alpha  = f_{i\pi}(\alpha)\bar 
f^*_i$, where $f_{i\pi}$ is the induced fundamental 
group homomorphism of $f_i$.
It follows that 
$\phi_i(\alpha) \bar f^*_{\sigma_\alpha^{-1}(i)}
= f_{i\pi}(\alpha)\bar f^*_i,$
so $\sigma_\alpha(i) = i$ and $\phi_i = f_{i\pi}$. 
\end{proof}

Since the permutations $\sigma_\alpha$ are all the identity in 
the split case, the characterization of the equivalence relation
$\sim_{lf}$ in Theorem \ref{equivwithpsi} can be simplified.

\begin{corollary}\label{splitclass}
Let $f=(f_1,\dots,f_n)$ be a split $n$-valued map with basic lifting $\bar f^*.$
Then $[(\alpha,i)] = [(\beta,j)]$ if and only if $i=j$ and 
there is some $\gamma$ such that $\alpha = \gamma
\beta f_{i\pi}(\gamma^{-1})$, where $f_{i\pi}$ is the induced 
fundamental group homomorphism of $f_{i\pi}$. 
\end{corollary}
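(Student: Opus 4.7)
The plan is to deduce this directly from Theorem \ref{equivwithpsi} combined with the preceding theorem that computes $\psi_f$ in the split case. Theorem \ref{equivwithpsi} characterizes $[(\alpha,i)] = [(\beta,j)]$ as the existence of $\gamma \in \pi_1(X)$ satisfying the two conditions $\sigma_\gamma(j) = i$ and $\alpha = \gamma \beta \phi_j(\gamma^{-1})$, so the goal is simply to substitute the explicit formulas obtained in the split setting into this characterization.

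First I would invoke the preceding theorem, which asserts that for a split $n$-valued map $f = \{f_1, \ldots, f_n\}$ with basic lifting $\bar f^* = (\bar f^*_1, \ldots, \bar f^*_n)$ (with $\bar f^*_i$ a lifting of $f_i$), one has $\sigma_\gamma = \mathrm{id}$ for every $\gamma \in \pi_1(X)$, and $\phi_i = f_{i\pi}$ for each $i$. Since $\sigma_\gamma$ is the identity permutation, the condition $\sigma_\gamma(j) = i$ from Theorem \ref{equivwithpsi} collapses to $j = i$. Thus $[(\alpha,i)] = [(\beta,j)]$ forces $i = j$, and the remaining condition $\alpha = \gamma \beta \phi_j(\gamma^{-1})$ becomes $\alpha = \gamma \beta f_{i\pi}(\gamma^{-1})$.

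Conversely, if $i = j$ and there exists $\gamma \in \pi_1(X)$ with $\alpha = \gamma \beta f_{i\pi}(\gamma^{-1})$, then since $\sigma_\gamma(j) = \sigma_\gamma(i) = i$ and $\phi_j(\gamma^{-1}) = f_{i\pi}(\gamma^{-1})$, the two conditions of Theorem \ref{equivwithpsi} hold, giving $[(\alpha,i)] = [(\beta,j)]$. There is no real obstacle here — the entire content is packaged in the earlier structural theorem about $\psi_f$ for split maps, and the corollary is just a bookkeeping reformulation. One small point worth mentioning explicitly in the write-up is that the relation appearing in the conclusion is the classical twisted conjugacy relation for the single-valued map $f_i$, so Corollary \ref{splitclass} says that the lift-factor classes of $f$ are precisely the disjoint union over $i$ of the ordinary Reidemeister classes of the constituent maps $f_i$, which sets up the promised identity $R(f) = R(f_1) + \cdots + R(f_n)$.
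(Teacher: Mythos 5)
Your argument is correct and is exactly the paper's intended derivation: the corollary follows by substituting $\sigma_\gamma = \mathrm{id}$ and $\phi_i = f_{i\pi}$ (from the preceding theorem on split maps) into the characterization of Theorem \ref{equivwithpsi}. Nothing further is needed.
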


The above corollary means that when $f$ splits, no 
fixed
point class $p\Fix(\alpha \bar f^*_i)$ can equal any 
fixed point class $p\Fix(\beta \bar f^*_j)$ when $i\neq j$, 
and when $i=j$ these fixed point classes are equal exactly 
when they are equal according to the classical theory of 
Reidemeister classes of a single-valued map.

\begin{theorem}
Let $f = (f_1,\dots,f_n)$ be a split $n$-valued map.
Then
\[
R(f) = R(f_1) + \dots + R(f_n),
\]
where on the right side, $R$ denotes the classical 
Reidemeister number of a single-valued map.
\end{theorem}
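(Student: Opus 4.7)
The plan is to deduce the formula almost directly from Corollary~\ref{splitclass}, which already does the heavy lifting by characterising the relation $\sim_{lf}$ in the split case as classical twisted conjugacy on each ``slice'' $\pi_1(X)\times\{i\}$.

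Recall from Section~\ref{xicosection} that the set of lift-factors of $f$ is in bijection with $\pi_1(X)\times\{1,\dots,n\}$, with $(\alpha,i)$ corresponding to the lift-factor $\alpha\bar f^*_i$. By Corollary~\ref{splitclass}, when $f$ splits we have
\[
[(\alpha,i)] = [(\beta,j)] \iff i=j \text{ and } \exists\gamma\in\pi_1(X):\; \alpha=\gamma\beta f_{i\pi}(\gamma^{-1}).
\]
The first observation I would make is that this characterisation implies the partition of $\pi_1(X)\times\{1,\dots,n\}$ into lift-factor classes respects the projection onto the second coordinate: no class straddles two values of $i$. Consequently the set of lift-factor classes is the disjoint union, over $i\in\{1,\dots,n\}$, of the lift-factor classes whose representatives have second coordinate equal to $i$.

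Next I would fix $i$ and examine the restriction of $\sim_{lf}$ to $\pi_1(X)\times\{i\}$. By Corollary~\ref{splitclass} again, on this slice $(\alpha,i)\sim_{lf}(\beta,i)$ precisely when there is some $\gamma\in\pi_1(X)$ with $\alpha=\gamma\beta f_{i\pi}(\gamma^{-1})$. But this is exactly the classical twisted conjugacy relation on $\pi_1(X)$ induced by the endomorphism $f_{i\pi}:\pi_1(X)\to\pi_1(X)$, whose number of equivalence classes is by definition the Reidemeister number $R(f_i)$ of the single-valued map $f_i$. Summing over $i$ then gives
\[
R(f) = \sum_{i=1}^n R(f_i).
\]

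There is really no serious obstacle here; the result is essentially a bookkeeping consequence of Corollary~\ref{splitclass}, which in turn rests on the preceding theorem identifying $\sigma_\alpha$ with the identity and $\phi_i$ with $f_{i\pi}$ in the split case. The only point worth being a little careful about is that when I say ``the classical Reidemeister number of $f_i$'' I am using that $\bar f^*_i$ is a lifting of the single-valued map $f_i$ to the universal cover $\tilde X$, so that the classical lifting classes of $f_i$ are indeed represented by the $\alpha\bar f^*_i$ with $\alpha\in\pi_1(X)$, and the induced fundamental group homomorphism is indeed $f_{i\pi}=\phi_i$. Once these identifications are in place, summing the cardinalities of the disjoint pieces completes the proof.
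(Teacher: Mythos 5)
Your proposal is correct and follows essentially the same route as the paper: both deduce the formula directly from Corollary~\ref{splitclass}, observing that the lift-factor classes partition along the second coordinate and that on each slice the relation is classical twisted conjugacy by $f_{i\pi}$. Your write-up is a bit more explicit about the identification of $\alpha\bar f^*_i$ with the classical lifting classes of $f_i$, which is a welcome clarification but not a different argument.
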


\begin{proof}
We have already shown that when $f$ is split, we have 
$[(\alpha,i)] = [(\beta,j)]$ if and only if $i=j$ and 
$\alpha$ and $\beta$ are classically Reidemeister equivalent 
by $\phi_i$, the induced homomorphism of $f_i$. Thus the 
number of Reidemeister classes of $f$ is the total 
of the number of Reidemeister classes of the various $f_i$.
\end{proof}

\end{document}